\newcommand{\C}{\mathbb{C}}
\newcommand{\D}{\mathbb{D}}
\newcommand{\N}{\mathbb{N}}
\newcommand{\Q}{\mathbb{Q}}
\newcommand{\R}{\mathbb{R}}
\newcommand{\Z}{\mathbb{Z}}
\newcommand{\cA}{\mathcal{A}}
\newcommand{\cC}{\mathcal{C}}
\newcommand{\cH}{\mathcal{H}}
\newcommand{\cM}{\mathcal{M}}
\newcommand{\cR}{\mathcal{R}}
\newcommand{\cU}{\mathcal{U}}
\DeclareMathOperator{\re}{Re}
\DeclareMathOperator{\im}{Im}
\DeclareMathOperator{\poly}{Poly}
\DeclareMathOperator{\Int}{int}
\DeclareMathOperator{\crit}{Crit}
\DeclareMathOperator{\sector}{Sector}
\newtheorem{thm}{Theorem}[section]
\newtheorem{lem}[thm]{Lemma}
\newtheorem{cor}[thm]{Corollary}
\newtheorem{conj}[thm]{Conjecture}
\theoremstyle{definition}
\newtheorem*{defn}{Definition}
\theoremstyle{remark}
\newtheorem{rem}[thm]{Remark}
\newcommand{\cAp}{c_*}
\newcommand{\HAp}{\cH_*}
\newcommand{\cApAp}{c_{**}}
\newcommand{\HApAp}{\cH_{**}}
\title{Self-similarity for the tricorn}
\author{Hiroyuki Inou}
\thanks{Partially supported by JSPS KAKENHI Grant Numbers 22740105 and
26400115.}
\address{Department of Mathematics, Kyoto University\\
Kyoto 606-8502 Japan}
\begin{document}

\begin{abstract}
 We discuss self-similar property of the tricorn, the connectedness
 locus of the anti-holomorphic quadratic family. 
 As a direct consequence of the study on straightening maps by Kiwi and
 the author \cite{MR2970463}, we show that there are many homeomorphic
 copies of the Mandelbrot sets.
 With help of rigorous numerical computation,
 we also prove that the straightening map is not continuous for
 the candidate of a ``baby tricorn'' centered at the airplane,
 hence not a homeomorphism.
\end{abstract}

\maketitle

\section{Introduction}

In the study of dynamics of quadratic polynomials $Q_c(z)=z^2+c$, 
the most important object is the \emph{Mandelbrot set}:
\[
 \cM=\{c \in \C;\ K(Q_c)\mbox{ is connected}\},
\]
where $K(Q_c)=\{z \in \C;\ \{Q_c^n(z)\}_{n \ge 0}$ is bounded$\}$ is 
the \emph{filled Julia set} of $Q_c$.
It is well-known that ``baby Mandelbrot sets'', which are homeomorphic
to the Mandelbrot set itself, are dense in the boundary of the
Mandelbrot set \cite{MR816367} \cite{MR1765084}.

Analogously, one may consider the anti-holomorphic family of quadratic
polynomials:
\[
 f_c(z)= \bar{z}^2+c.
\]
The connectedness locus of this family is called the
\emph{tricorn} and we denote it by $\cM^*$ (Figure~\ref{fig:M and M^*}):
Since the second iterate $f_c^2$ is a (real-analytic) family
of holomorphic polynomials, one may regard the family as a special
family of holomorphic dynamics. 
In fact, Milnor \cite{MR1181083} numerically observed a ``little
tricorn'' in the real cubic family, 
and presented the tricorn as a prototype of such objects.
\begin{figure}[hbt]
 \centering
 \fbox{\includegraphics[width=5cm] {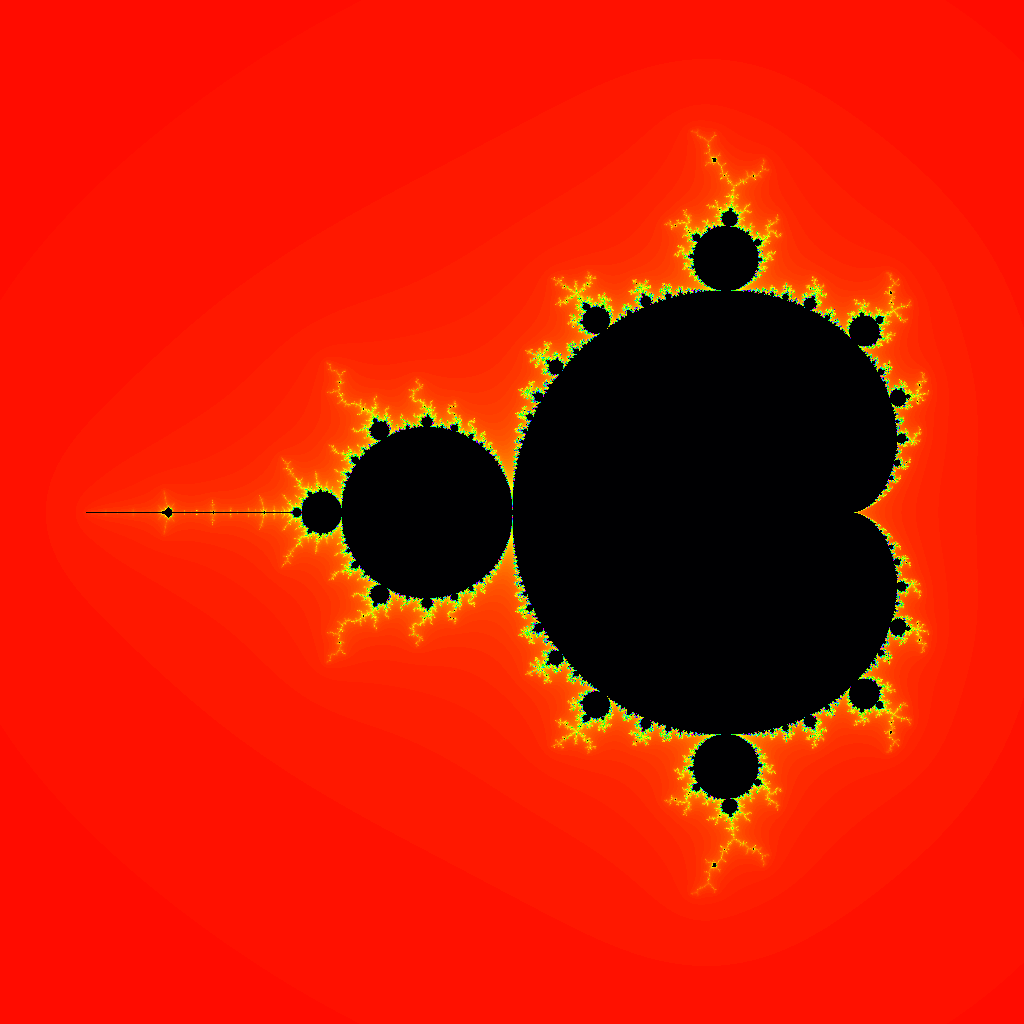}}\quad
 \fbox{\includegraphics[width=5cm] {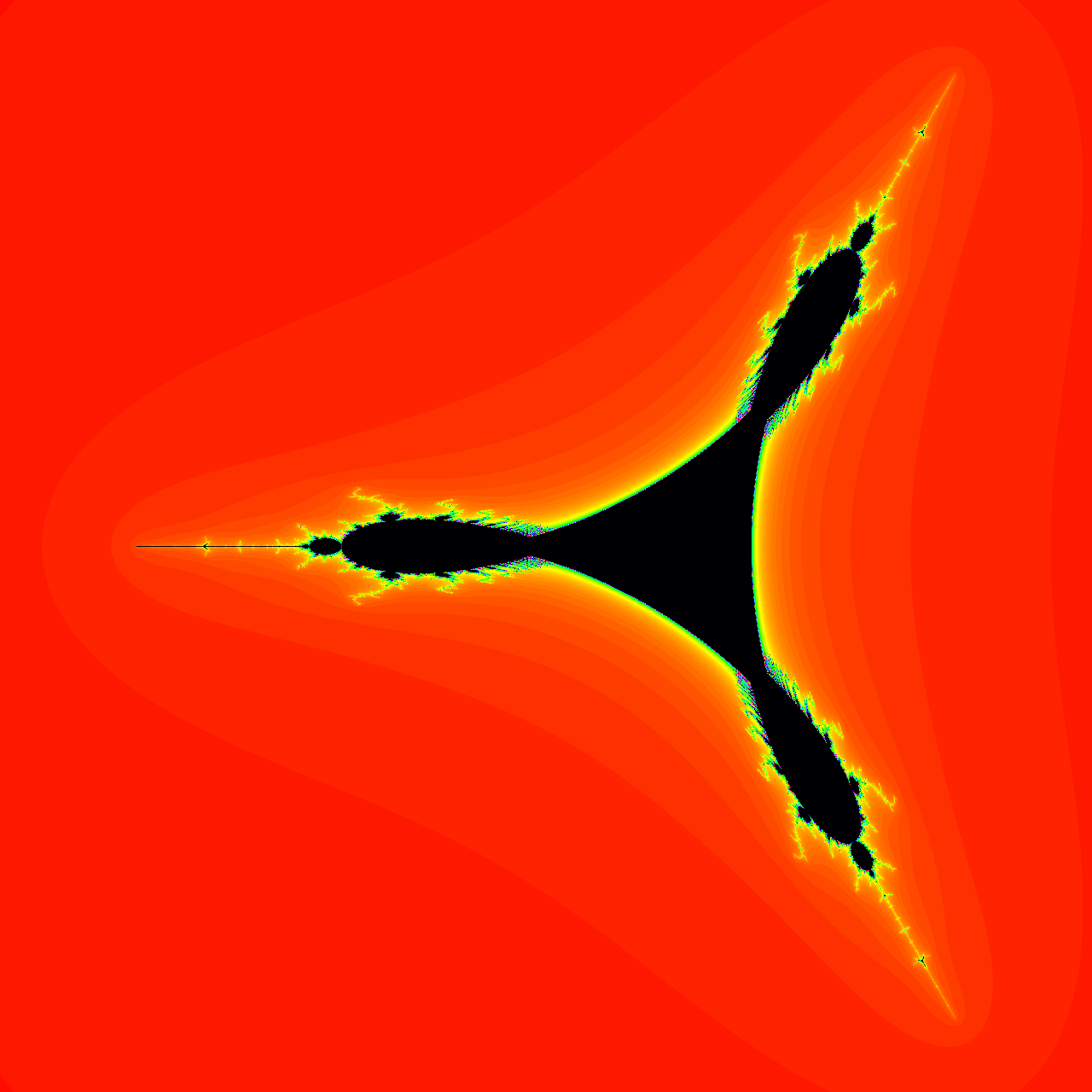}}
 \caption{The Mandelbrot set and the tricorn.}
 \label{fig:M and M^*}
\end{figure}

The tricorn was first named the \emph{Mandelbar set} and studied by
Crowe et al.\ \cite{MR1020441}. 
Nakane proved that the tricorn is connected by
constructing a real-analytic diffeomorphism between $\C \setminus \cM^*$
and $\C \setminus \overline{\D}$, where $\D$ is the unit disk,
using the B\"ottcher coordinate \cite{MR1235477}.
Mukherjee, Nakane and Schleicher studied its bifurcations
\cite{MR2020986} \cite{Mukherjee:2014ab}.
Numerically one can easily see that there are copies of the Mandelbrot
set, but some of those are ``blown up'' at the root; for example,
Crowe et al.\ \cite{MR1020441} observed that the main hyperbolic
component  (the one containing the origin) and each period two
hyperbolic component attached to it have a common boundary arc with a cusp.
On this common boundary arc, $f_c^2$ has a parabolic fixed point of
multiplier one. Hence this boundary arc should correspond to the root
point of the Mandelbrot set.
Moreover, this shows that the bifurcation locus is strictly bigger
than the boundary of the tricorn.

One of the most striking properties is that the tricorn is not 
arcwise connected.
It looks like that there is an ``umbilical cord'' accumulating to every
hyperbolic component of period greater than one, which connects the
component to the main hyperbolic component.
However, any reasonable picture of the ``umbilical cord'' of any
\emph{non-real} hyperbolic component of odd period oscillates and it
does not seem to converge to a point. 
Here \emph{non-real} stands for hyperbolic components intersecting the
real line and the lines symmetric to the real line by the rotational
symmetry of $\cM^*$. For real ones, umbilical cords are straight
segments and trivially land at a point.

Hubbard-Schleicher \cite{hubbard_multicorns_2014} (see also Nakane-Schleicher
\cite{MR1463779}) proved that in fact the umbilical cords
do not land under an assumption called OPPPP (Odd Period Prime Principal
Parabolic).
Note that it is conjectured that the Mandelbrot set is locally
connected, hence path connected.
In fact, it is known 
that any two Yoccoz parameters (i.e., at most finitely renormalizable
parameters) in the Mandelbrot set can be connected by an arc in the
Mandelbrot set \cite[Theorem~5.6]{Schleicher:aa}
(see also Petersen and Roesch \cite{MR2477422}).
Therefore, the umbilical cord always exists and converges to a point for
any hyperbolic component of the Mandelbrot set.

The non-landing phenomena of umbilical cords strongly suggests that
``little tricorns'' are not actually homeomorphic to the tricorn itself; 
although the umbilical cords on the real line land,
the corresponding umbilical cords in ``little tricorns'' do not lie in
the real line and its symmetries, hence they seems to oscillate.

The landing property of an umbilical cord is related to
real-analyticity of (a subtree of) the \emph{parabolic Hubbard tree}
in the dynamical plane for the landing parameter.
Therefore, if there is a periodic point with non-real multiplier on the
parabolic Hubbard tree, then it cannot be real-analytic, hence the
umbilical cord does not land \cite[Remark~6.4]{hubbard_multicorns_2014}.

In this article, we check this property for a specific ``little
tricorn'' using rigorous numerical computation and show the following:
\begin{thm}
 There is a baby tricorn-like set whose straightening map is not continuous.
\end{thm}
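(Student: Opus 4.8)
The plan is to deduce discontinuity from the bijectivity of the straightening map together with a symmetry obstruction, reducing everything to a single landing/non-landing comparison that is then settled by validated numerics. By the straightening theory of \cite{MR2970463}, the period-three renormalization locus around the airplane is a baby tricorn-like set $\cR$, and the associated straightening map $\chi\colon\cR\to\cM^*$ is a bijection that respects the combinatorial structure; in particular it carries hyperbolic components, parabolic parameters and umbilical cords of $\cR$ onto those of $\cM^*$. Since $\cR$ and $\cM^*$ are compact, a continuous bijection between them is automatically a homeomorphism, so it suffices to exhibit one umbilical cord of $\cR$ whose landing behaviour differs from that of its image: whether an arc $\gamma$ lands, i.e.\ whether $\overline{\gamma}\setminus\gamma$ is a single point, is a topological property preserved by any homeomorphism.

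The obstruction I would use is symmetry. The tricorn $\cM^*$ carries the order-six symmetry generated by $c\mapsto e^{2\pi i/3}c$ and $c\mapsto\bar c$, so all three of its reflection axes are images of the real axis under a conjugacy of the family and hence carry landing umbilical cords. By contrast $\cR$, being centred at the real airplane parameter, is invariant only under complex conjugation; it has a single reflection axis, which $\chi$ sends to the real axis of $\cM^*$, so the $\chi$-preimage of a \emph{rotated} reflection axis of $\cM^*$ is a non-real curve in $\cR$ meeting no symmetry axis of $\cR$. I would therefore pick the parabolic parameter $\cApAp\in\cR$ whose image $\chi(\cApAp)$ is an odd-period parabolic parameter on a rotated reflection axis: its cord lands by symmetry with the real case, while $\cApAp$ is forced to be non-real. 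To prove that the cord at $\cApAp$ does \emph{not} land I would invoke \cite[Remark~6.4]{hubbard_multicorns_2014}: it is enough to produce a periodic point $w$ on the parabolic Hubbard tree of $f_{\cApAp}$ whose multiplier is non-real. As every odd-period cycle of an anti-holomorphic map has non-negative real multiplier, such a $w$ must have even period $p$, and then $f_{\cApAp}^{\,p}$ is holomorphic, so its multiplier $(f_{\cApAp}^{\,p})'(w)$ is a genuine complex number.

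This reduces the theorem to a finite computation that I would certify with interval arithmetic: (i) enclose $\cApAp$ by an interval Newton or Krawczyk test applied to the real-analytic system expressing that $f_{\cApAp}$ has the prescribed parabolic cycle of multiplier one; (ii) certify the combinatorics of the parabolic Hubbard tree, in particular that the chosen even-period point $w$ lies on it; and (iii) enclose $(f_{\cApAp}^{\,p})'(w)$ in a box disjoint from $\R$, so that $\im (f_{\cApAp}^{\,p})'(w)\neq 0$. Granting these, the cord at $\cApAp$ does not land whereas the cord at $\chi(\cApAp)$ does, yet $\chi$ maps one onto the other; a homeomorphism cannot do this, so $\chi$ is not continuous.

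The hard part is steps (ii)--(iii). The Hubbard tree here is genuinely parabolic, so $w$ and its multiplier must be controlled in the presence of a cycle whose own multiplier lies on the unit circle, where direct iteration provides no contraction. I would locate $w$ as a simple root of $f_{\cApAp}^{\,p}(z)=z$ lying off the parabolic cycle, and I would verify the edge--vertex incidences of the tree through certified enclosures of the relevant regulated arcs rather than through the parabolic dynamics itself. By comparison, the input that $\chi(\cApAp)$ lies on a rotated reflection axis, and hence has a landing cord, is soft, following from equivariance of straightening under the symmetry group.
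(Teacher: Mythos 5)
Your overall strategy is the same as the paper's: compare the landing cord on a rotated symmetry axis of $\cM^*$ with its non-landing preimage in the baby tricorn-like set centered at the airplane, detect non-landing via a periodic point of non-real multiplier on the parabolic Hubbard tree (\cite[Remark~6.4]{hubbard_multicorns_2014}), and close the argument with the compactness/injectivity observation that a continuous injection from a compact set into a Hausdorff space is a homeomorphism onto its image. However, two of your steps have genuine gaps. First, your premise that $\chi\colon\cR\to\cM^*$ is a \emph{bijection} is not available: surjectivity of straightening maps for tricorn-like sets is explicitly open. What the paper actually has is injectivity (Theorem~\ref{thm:IH-inj}), compactness (Theorem~\ref{thm:IH-cpt}, via primitivity of odd-period centers), and the image containing all hyperbolic parameters; from these it must still \emph{prove} surjectivity onto the real (hence, after rotation of the internal marking, the rotated) connectedness locus, Corollary~\ref{cor:real surjectivity}, and this step is not soft --- it uses density of hyperbolicity in the real quadratic family (Graczyk--\'Swi\c{a}tek, Lyubich) together with a quasiconformal-rigidity and compactness argument. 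Without it you do not know that the preimage cord $\chi^{-1}\bigl(\omega^2(\cAp',0]\bigr)$ exists at all, so the object whose non-landing you want to certify is not yet defined.

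Second, and fatally for the numerics as written, your step (i) cannot work: in the anti-holomorphic family, parameters with a multiplier-one parabolic cycle are \emph{not isolated} --- they form the parabolic arcs --- so the system ``$f_c$ has a parabolic cycle of multiplier one'' has a one-real-dimensional solution set and any interval Newton/Krawczyk operator applied to it is degenerate along the arc. The particular parameter you need (critical Ecalle height zero on the root arc) is singled out only through Fatou coordinates, which are transcendental and very hard to enclose rigorously; the same objection applies to your step (ii) of certifying tree combinatorics at a parabolic parameter. The paper's computation is organized precisely to avoid both issues: it never locates the Ecalle-height-zero parameter $b$. Instead it verifies, for \emph{every} $c$ in a rectangle $R$ containing the entire boundary of the period-$9$ component $\cH_9$, that $f_c^3$ admits a quadratic-like restriction, that there is a unique fixed point $x_c$ of $f_c^6$ in a fixed box inside the small filled Julia set (Lemma~\ref{lem:unique per6}), and that the locus where $x_c$ has real multiplier is disjoint from $\partial\cH_9$ (Lemma~\ref{lem:disjoint}); since $b\in\partial\cH_9$ wherever it may lie on the root arc, the hypothesis of Theorem~\ref{thm:discont} is verified without ever isolating $b$. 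Likewise, membership of the cycle in the parabolic tree is obtained for free, not numerically: the renormalization at $b$ is hybrid conjugate to $f_{\cAp'}$, whose Hubbard tree is the interval $[-1.75,1.3125]$ containing its unique period-two cycle, and hybrid conjugacy transports the tree (though \emph{not} multipliers --- which is exactly why only the multiplier requires rigorous computation).
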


The precise definition of baby tricorn-like sets and their straightening
maps are given in Section~\ref{sec:ren}. Straightening maps can be
considered as a restriction of those defined for the family of quartic (or
biquadratic) polynomials in \cite{MR2970463}. See
Appendix~\ref{sec:app-streightening} for more details.

More precisely, what we prove is the following:
Let $\cAp \in \R$ be the airplane parameter, i.e., the critical point 0
is periodic of period three for $f_{\cAp}$. 
Consider the baby tricorn-like set $\cC(\cAp)$ centered at $\cAp$ 
and its straightening map $\chi_{\cAp}:\cC(\cAp) \to \cM^*$ (precise
definition is given in \S 4).  
Let $\cApAp=\chi^{-1}(\omega^2 \cAp)$. Then $\cApAp$ is the center of a
period 9 hyperbolic component $\HApAp \subset \cC(\cAp)$. 
We prove that the umbilical cord for $\HApAp$ does not land at a point
(see Figure~\ref{fig:H_**}) by rigorous numerical computation.
On the contrary, $\chi_{\cAp}(\HApAp)=\omega^2\HAp$ and the umbilical
cord for $\omega^2\HAp$ do land trivially (it is just a segment in
$\omega^2\R$).
We also prove that $\cM^* \cap \omega^2\R$ is contained in the image
$\chi_{\cAp}(\cC(\cAp))$, hence it follows that $\chi_{\cAp}$ is not
continuous.
\begin{figure}[ht]
 \centering
 \fbox{\includegraphics[width=5cm]
 {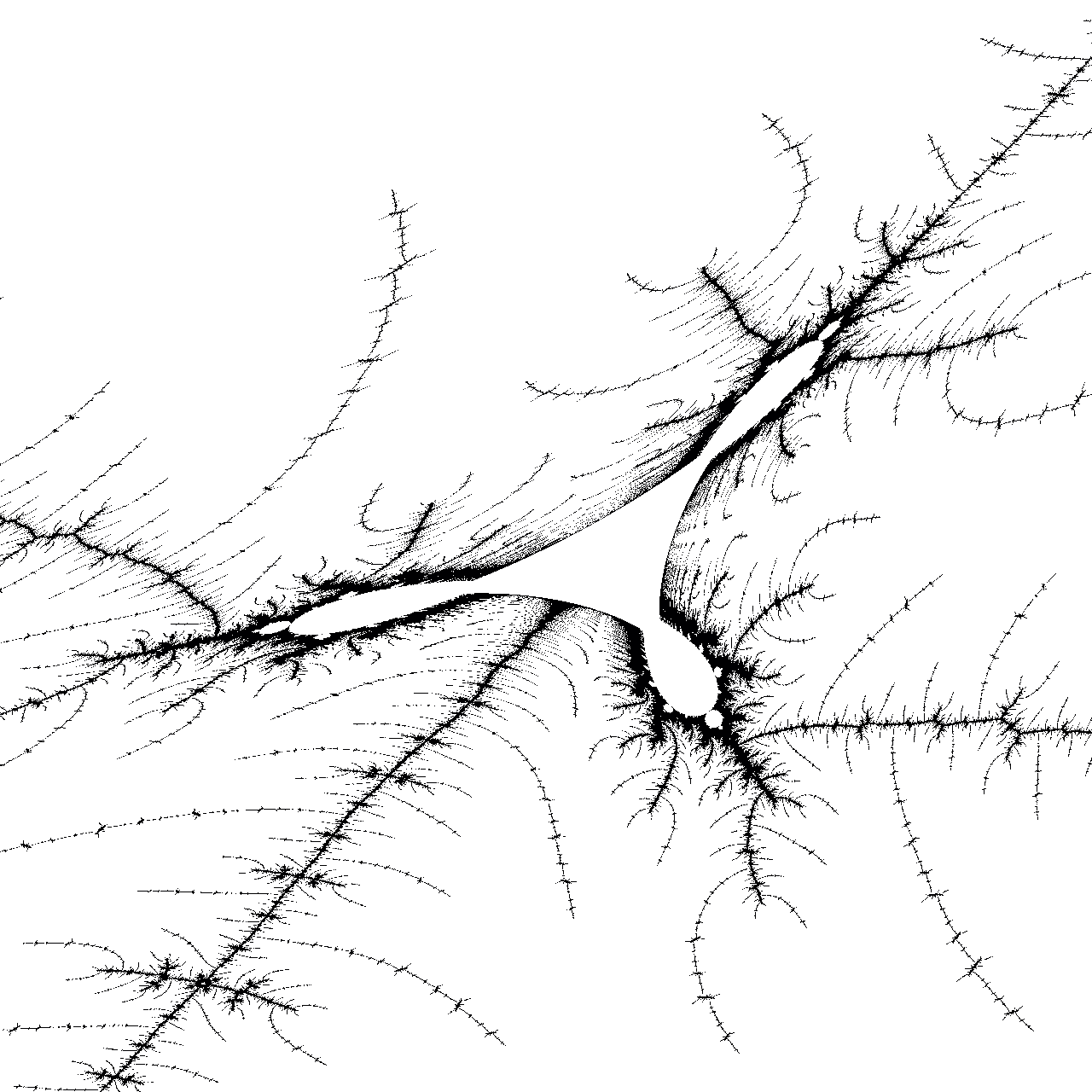}}\quad
 \fbox{\includegraphics[width=5cm]
 {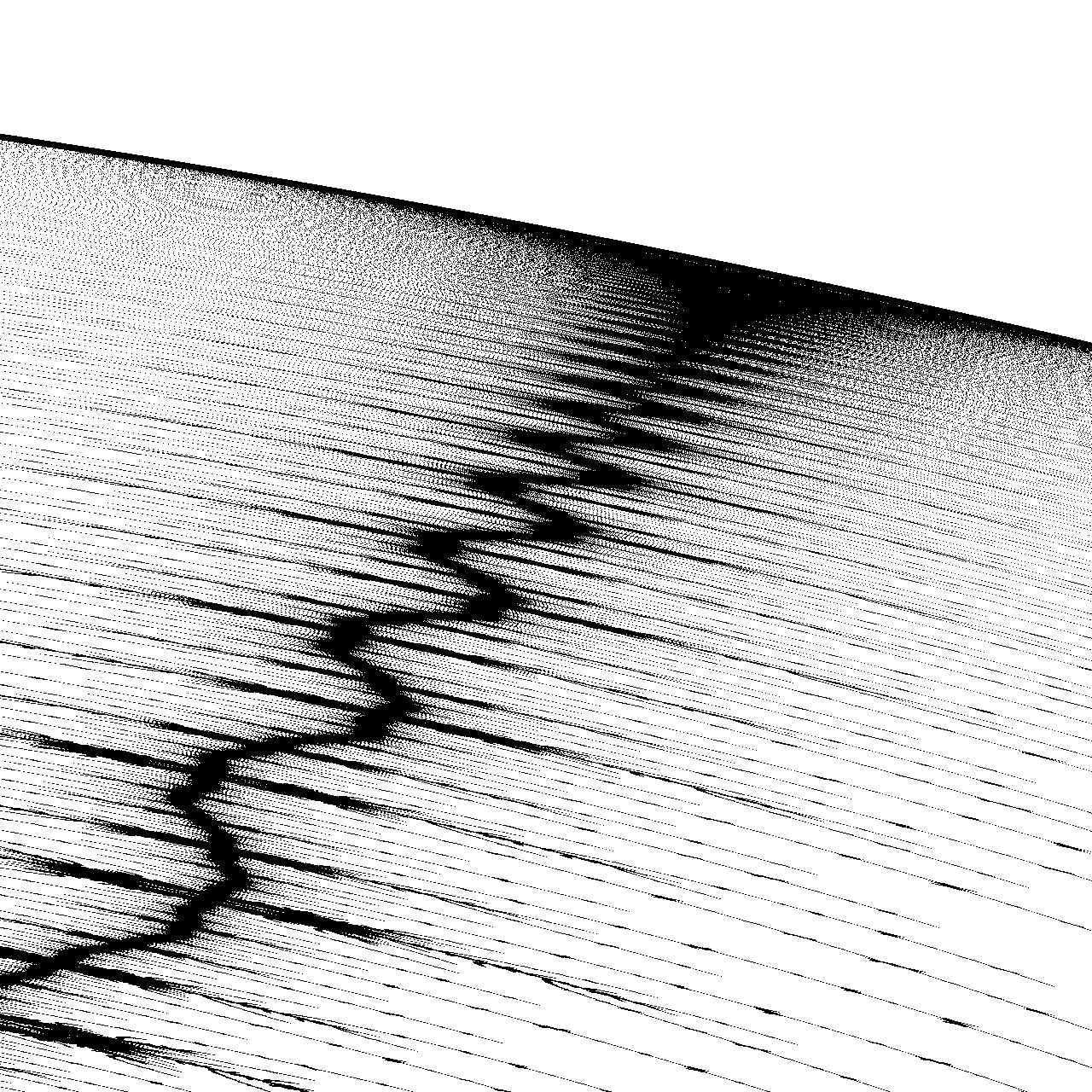}} 
 \caption{$\HApAp$ and its umbilical cord.}
 \label{fig:H_**}
\end{figure}

In the successive article with Mukherjee
\cite{straightening_discontinuity},
we give a non-computer-assisted proof of non-landingness of ``umbilical
cords'' for any non-real hyperbolic component in the family of
unicritical anti-holomorphic polynomials $\bar{z}^d+c$ of degree $d \ge
2$ (the \emph{multicorn family} ).
In particular, the theorem holds for any ``multicorn-like set'' of even
degree.
Note that when the degree $d$ is odd, this implies that there is no
landing ``umbilical cord'', hence we cannot conclude 
discontinuity of straightening maps from this result.

The author already proved discontinuity of straightening maps in
\cite{inou:2009aa} under more general setting.
However, its proof is by contradiction, so although it is constructive,
it is difficult to point out at which parameter a given straightening
map (or its inverse) is actually not continuous.
Moreover, the proof needs perturbations in two complex dimensional
parameter space. So it is quite difficult to get a numerical picture of
such discontinuity from this proof.
On the other hand, the tricorn is easy to draw and 
see what is happening at such a discontinuous parameter.

Numerical pictures also suggests that decorations attached to 
umbilical cords and hyperbolic components of double period 
also cause such discontinuity; indeed, one can also find 
a hyperbolic component of odd period whose boundary is \emph{not
accessible} from $\C \setminus \cM^*$ (Figure~\ref{fig:inaccessible}).

\begin{figure}[ht]
 \centering
 \fbox{\includegraphics[height=5cm]
 {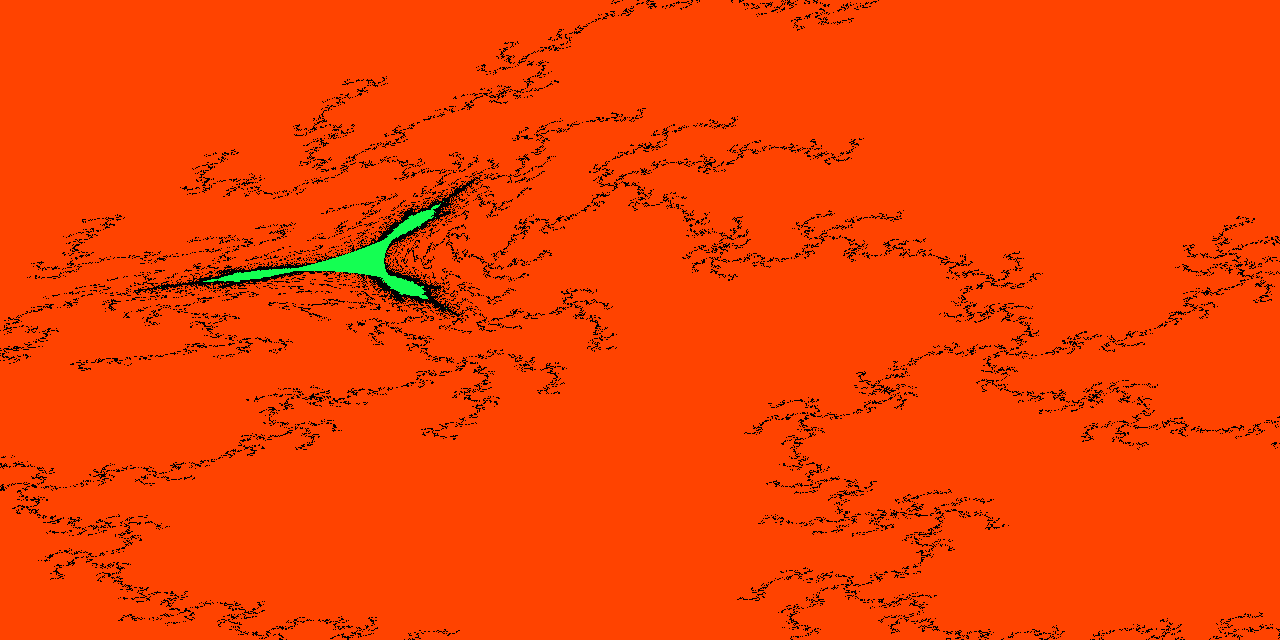}}
 \fbox{\includegraphics[width=5cm]
 {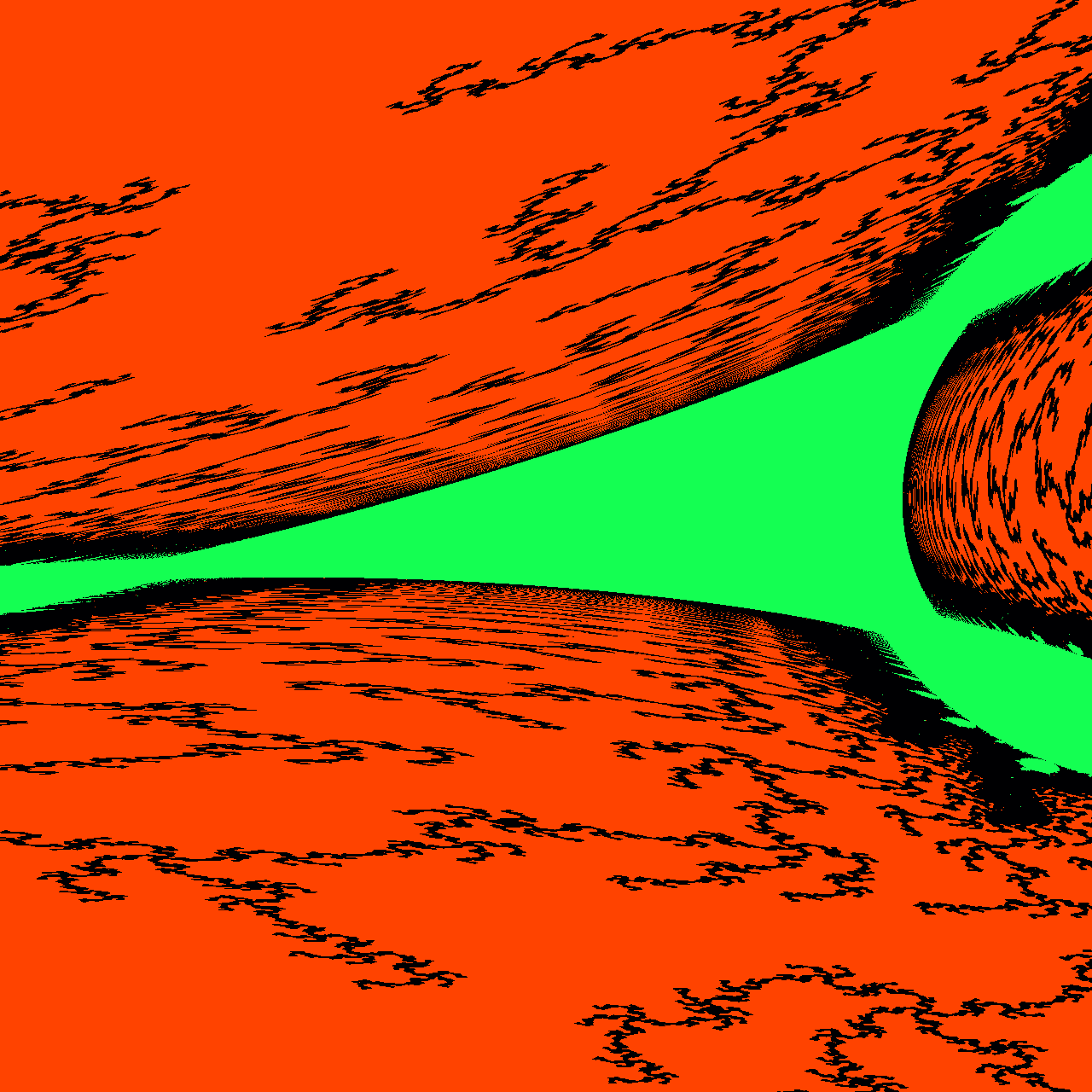}}
 \caption{An example of inaccessible hyperbolic component.}
 \label{fig:inaccessible}
\end{figure}

Hence it is very unlikely that any given two baby tricorn-like sets
have the same topological property for decorations of each pair of
corresponding hyperbolic components of odd period: 
\begin{conj}
 Any two baby tricorn-like sets are not homeomorphic unless they
 are symmetric.
\end{conj}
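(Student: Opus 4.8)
The plan is to extract from the non-landing phenomenon a topological invariant fine enough to separate non-symmetric copies. For a baby tricorn-like set $\cC(c)$ with straightening map $\chi_c\colon \cC(c)\to\cM^*$, call an odd-period hyperbolic component \emph{straight} if its umbilical cord lands at a point, and \emph{oscillating} otherwise. By the results quoted above, a component is straight exactly when its image under $\chi_c$ meets one of the symmetry lines $\omega^i\R$ ($i=0,1,2$), equivalently when it lies on the corresponding symmetry line of $\cC(c)$; every other odd-period component is oscillating, so its boundary fails to be accessible from $\C\setminus\cC(c)$ along a landing arc. The first step is therefore to record, for each $\cC(c)$, the combinatorial positions (renormalization addresses relative to the center $c$) of the straight components. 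This set of addresses is the candidate invariant.

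The second step is to show that any homeomorphism $h\colon\cC(c_1)\to\cC(c_2)$ must respect this data. Hyperbolic components are the path-components of the interior of a baby tricorn-like set, so $h$ permutes them; and the distinction between a straight and an oscillating component is an intrinsic arcwise-accessibility property of the pair (component, ambient set), hence preserved by $h$. I would then analyze how the symmetry lines sit inside $\cC(c)$: they are the fixed-point loci of the restrictions of the global symmetries of the anti-holomorphic family (complex conjugation and rotation by the cube roots of unity), and under $\chi_c$ they correspond to $\cM^*\cap\omega^i\R$. Consequently $h$ must carry straight components to straight components compatibly with the tree structure of the umbilical cords, which pins down how the symmetry lines of $\cC(c_1)$ map into those of $\cC(c_2)$.

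Combining these, if the two address patterns agree then the centers $c_1$ and $c_2$ are related by one of the finitely many symmetries of the family, and such copies are manifestly homeomorphic; this is the \emph{symmetric} exception in the statement. The inaccessible components of Figure~\ref{fig:inaccessible} and the decorations attached to components of double period furnish a second, independent layer of invariants that would confirm the rigidity when the odd-period data alone is insufficient.

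The hard part will be proving that the straight/oscillating dichotomy is a genuine homeomorphism invariant with enough precision, and that the resulting address patterns actually differ for every non-symmetric pair. This demands a complete description of which odd-period components become real-like under renormalization for an arbitrary center, together with local rigidity of $\cM^*$ near an oscillating cord strong enough to exclude an \emph{accidental} homeomorphism that permutes decorations. Since establishing non-landing for even a single cord already requires the delicate analysis of the parabolic Hubbard tree (and, here, rigorous computation), carrying out such a comparison simultaneously for the infinitely many cords involved is the essential obstacle, and is why the assertion is stated only as a conjecture.
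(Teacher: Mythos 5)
This statement is stated in the paper as a \emph{conjecture}: the paper offers no proof of it, and your proposal is not one either --- as you yourself concede in the final paragraph, the key steps are left as ``the hard part.'' Beyond that admitted incompleteness, the foundation of your invariant contains a concrete error. You assert that an odd-period component of $\cC(c)$ is ``straight'' exactly when its image under $\chi_c$ meets a symmetry line $\omega^i\R$ of $\cM^*$, ``equivalently'' when the component lies on a symmetry line of $\cC(c)$. This equivalence is precisely what the paper's main theorem disproves: Theorem~\ref{thm:discont} together with the rigorous computations of Section~\ref{sec:computations} exhibit a component $\HApAp\subset\cC(\cAp)$ whose image $\chi_{\cAp}(\HApAp)=\omega^2\HAp$ lies on the symmetry line $\omega^2\R$ of $\cM^*$, yet whose umbilical cord does \emph{not} land. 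The whole point of the discontinuity of straightening maps is that landing behaviour cannot be read off from the image under $\chi_c$, so an invariant defined through $\chi_c$ in this way collapses exactly where it is needed.

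Even after repairing this (defining ``straight'' intrinsically in $\cC(c)$), the essential ingredients are missing and are currently out of reach. First, your invariant requires knowing, for \emph{every} center $c_0$ and \emph{every} odd-period component of $\cC(c_0)$, whether its cord lands; the paper establishes non-landing for a single component of a single baby tricorn-like set, and only with computer assistance, while the cited follow-up work treats components of the multicorns themselves, not of arbitrary baby tricorn-like sets. Second, your claim that hyperbolic components are the path-components of the interior presupposes that $\Int\cC(c)$ has no non-hyperbolic (queer) components, which is an open problem of Fatou-conjecture type. Third, the concluding step --- that agreement of the address patterns forces $c_1$ and $c_2$ to be related by a symmetry of the family --- is asserted without any argument, and it is essentially a restatement of the conjecture itself. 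So the proposal is a plausible strategy consistent with the heuristics the paper gives (oscillating cords, inaccessible components, decorations), but it is not a proof, and the statement remains a conjecture.
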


One may also consider baby tricorn-like sets in other families,
such as the real cubic family and the cubic antipodal-preserving rational
family (which is recently studied by Bonifant, Buff and Milnor \cite{antipode}).

The author would thank Shigehiro Ushiki and Zin Arai for helpful
discussions and comments. He would also thank Wolf Jung and Sabyasachi
Mukherjee for helpful comments.

\section{Tricorn}

Consider the following family of anti-holomorphic polynomials:
\[
 f_c(z) = \bar{z}^2+c.
\]
Observe that the second iterate $f_c^2(z) = (z^2+\bar{c})^2+c$ forms a
real-analytic two-parameter family of holomorphic dynamics.
Hence as in the case of holomorphic dynamics, one can consider the
\emph{filled Julia set} $K_c$ and the \emph{Julia set} for $f_c$
as follows:
\[
 K_c = \{z \in \C;\{f_c^n(z)\} \mbox{: bounded}\},\quad
 J_c = \partial K_c.
\]
The critical point of $f_c^2$ is $0$ and $f_c^{-1}(0)=\pm\sqrt{\bar{c}}$, hence 
$K_c$ and $J_c$ are connected if and only if the orbit of $0$ is
bounded (by $f_c$). The connectedness locus of this family is called the
\emph{tricorn} and denoted by $\cM^*$: 
\begin{align*}
 \cM^* &= \{c \in \C;\ K_c\mbox{: connected}\}  \\
 &= \{c \in \C;\ \{f_c^n(c)\}_{n\ge 0}\mbox{: bounded}\}.
\end{align*}

We denote the open disk of radius $r>0$ centered at $z$ by
$\D(r,z)$ and its closure by $\bar{\D}(r,z)$. We simply denote
them by $\D(r)$ and $\bar{\D}(r)$ when $z=0$ and we also denote $\D=\D(1)$.

\begin{lem}
 $\cM^* \subset \bar{\D}(2)$.
\end{lem}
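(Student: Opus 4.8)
The plan is to mimic the classical escape-radius argument for the Mandelbrot set, noting that the anti-holomorphic conjugation in $f_c(z)=\bar z^2+c$ is invisible to the modulus, since $|\bar z^2|=|z|^2$. I would argue by contraposition: fixing $c$ with $|c|>2$, I track the orbit entering the definition of $\cM^*$, namely $w_0=c$ and $w_{n+1}=f_c(w_n)=f_c^{n+1}(c)$, and show it escapes to infinity. By the characterization $\cM^*=\{c\in\C;\ \{f_c^n(c)\}_{n\ge 0}\text{ bounded}\}$ recalled above, an unbounded orbit forces $c\notin\cM^*$, which is exactly the contrapositive of the claimed inclusion.

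First I would record the elementary triangle-inequality bound
\[
 |f_c(z)| = |\bar z^2 + c| \ge |z|^2 - |c|,
\]
valid for every $z$. Setting $\lambda = |c| - 1 > 1$, I then observe that as soon as $|z|\ge |c|$ one has
\[
 |f_c(z)| \ge |z|^2 - |c| \ge |z|^2 - |z| = |z|(|z|-1) \ge \lambda|z|,
\]
where the last inequality uses $|z|-1 \ge |c|-1 = \lambda$. Since $|w_0|=|c|>2$, the strict bound $\lambda>1$ lets me run an induction: if $|w_n|\ge |c|$ then $|w_{n+1}|\ge\lambda|w_n|\ge\lambda|c|>|c|$, so the hypothesis propagates and in fact $|w_n|\ge\lambda^{\,n}|c|\to\infty$. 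Thus the orbit is unbounded and $c\notin\cM^*$.

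Taking the contrapositive yields $\cM^*\subset\bar{\D}(2)$. There is essentially no hard step here; the only points requiring a little care are that the strict inequality $|c|>2$ is precisely what produces the geometric factor $\lambda>1$ needed for genuine escape rather than mere non-decrease, and that one should run the induction on the orbit of $c$ (equivalently, the tail of the critical orbit, since $f_c(0)=c$). The boundary case $|c|=2$ need not be treated separately, as the closed-disk containment follows directly from the open exterior estimate.
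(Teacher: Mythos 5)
Your proof is correct. The triangle-inequality bound $|f_c(z)| \ge |z|^2 - |c|$, the observation that $|z| \ge |c| > 2$ gives $|f_c(z)| \ge (|c|-1)\,|z|$ with $\lambda = |c|-1 > 1$, and the induction starting from $w_0 = c$ together produce genuine geometric escape of the critical orbit; by the characterization $\cM^* = \{c \in \C;\ \{f_c^n(c)\}_{n\ge 0}\ \text{bounded}\}$ stated in the paper, this excludes every $c$ with $|c| > 2$, which is exactly the contrapositive of the claim. The comparison with the paper is slightly unusual here: the paper gives no proof of this lemma at all, deferring to Crowe et al.\ (their Theorem~2), so your self-contained argument is an addition rather than a variant of anything in the text. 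It is worth noting that your argument runs exactly parallel to the proof the paper \emph{does} supply for the immediately following lemma (that $c \in \bar{\D}(2)$ implies $K_c \subset \bar{\D}(2)$): there, too, the anti-holomorphic conjugation is handled purely through the modulus via $|f_c(z)| > |z|^2 - |c|$, and a multiplicative factor $1+\varepsilon$ forces escape. The structural difference is that the paper's dynamical-plane lemma fixes $|c| \le 2$ and lets $|z|$ be large, whereas you keep the point on the critical orbit and let $|c| > 2$ drive the escape; both are instances of the classical escape-radius argument for $z^2+c$, which survives the complex conjugation precisely because $|\bar{z}^2| = |z|^2$, as you point out at the start.
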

See \cite[Theorem~2]{MR1020441}.

\begin{lem}
 For any $c \in \bar{\D}(2)$ (in particular for $c \in \cM^*$),
 we have $K_c \subset \bar{\D}(2)$.
\end{lem}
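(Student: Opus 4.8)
The plan is to show that the closed disk $\bar{\D}(2)$ is forward-invariant under $f_c$ whenever $c \in \bar{\D}(2)$, which immediately forces the entire orbit of any starting point in $\bar{\D}(2)$—and hence the filled Julia set $K_c$—to remain inside $\bar{\D}(2)$. The key estimate is a standard escape-radius computation adapted to the anti-holomorphic map. First I would observe that $|f_c(z)| = |\bar{z}^2 + c| = |z^2 + c|$ (since $|\bar z|=|z|$), so that for $|z| \le 2$ and $|c| \le 2$ we have the crude bound $|f_c(z)| \le |z|^2 + |c| \le 4 + 2 = 6$, which is not yet good enough. So the real content is to get a sharp invariance statement rather than a weak one.

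The sharp step is to verify $f_c(\bar{\D}(2)) \subset \bar{\D}(2)$ directly. For $|z| \le 2$ and $|c| \le 2$, I would estimate $|f_c(z)| \le |z|^2 + |c|$; this alone gives $6$, so the triangle inequality by itself is too lossy. Instead the right move is the classical argument: if $|z| \ge |c|$ and $|z| \ge 2$ were to hold, one shows the orbit escapes, but here we want the complementary containment, so I would argue as follows. Take any $z \in \bar{\D}(2)$ and any $c \in \bar{\D}(2)$, and consider the orbit $\{f_c^n(z)\}$. Suppose for contradiction that the orbit leaves $\bar{\D}(2)$; let $n$ be the first index with $|f_c^n(z)| > 2$. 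Writing $w = f_c^{n-1}(z)$ with $|w| \le 2$, one has $|f_c(w)| = |w^2 + c| \ge |w|^2 - |c|$, which does not directly give a contradiction either. The cleaner route, which I expect to be the actual mechanism, is to note that the map escapes precisely when $|z| > 2$: if $|z| > 2 \ge |c|$ then $|f_c(z)| = |z^2 + c| \ge |z|^2 - |c| \ge |z|^2 - |z| = |z|(|z|-1) > |z|$, so once a point exceeds radius $2$ its modulus strictly increases (and in fact escapes to infinity). Consequently no point of $\bar{\D}(2)$ can map to a point outside $\bar{\D}(2)$ and then return, so any orbit that ever exits $\bar{\D}(2)$ must have already started outside it.

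Concretely, I would prove the contrapositive: if $z \notin \bar{\D}(2)$, i.e. $|z| > 2$, then (using $|c| \le 2 \le |z|$) the modulus grows without bound, so $z \notin K_c$. This shows $K_c \subset \bar{\D}(2)$. The inequality chain is $|f_c(z)| = |\bar z^2 + c| \ge |z|^2 - |c| \ge |z|^2 - 2$, and combined with $|z| > 2$ one gets $|z|^2 - 2 > |z|$ (equivalently $|z|^2 - |z| - 2 = (|z|-2)(|z|+1) > 0$), giving $|f_c(z)| > |z| > 2$; iterating, $|f_c^n(z)| \to \infty$. Hence $z \notin K_c$, which is exactly the statement that $K_c \subset \bar{\D}(2)$.

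The main obstacle, such as it is, lies only in choosing the right inequality so that the exit radius is exactly $2$ rather than some larger constant; the factorization $(|z|-2)(|z|+1) > 0$ is what makes the bound tight and confirms that the threshold matches the radius $2$ appearing in the previous lemma. Everything else is the routine observation that $|\bar z^2 + c| = |z^2 + c|$ so that the anti-holomorphic nature of $f_c$ plays no role in the modulus estimate, and that strict growth above radius $2$ propagates to escape to infinity. I would therefore present the argument as a short direct escape estimate rather than invoking any deeper structure.
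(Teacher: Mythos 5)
Your final, concrete argument---the contrapositive escape estimate---is exactly the paper's: for $|z|>2$ and $|c|\le 2$ one bounds $|f_c(z)| \ge |z|^2-|c| \ge |z|^2-2 > |z|$, so points outside $\bar{\D}(2)$ escape and hence $K_c \subset \bar{\D}(2)$. However, your write-up contains false framing statements and one genuine gap. The opening plan is wrong: $\bar{\D}(2)$ is \emph{not} forward-invariant under $f_c$ for $c \in \bar{\D}(2)$ (take $c=2$, $z=2$: then $f_c(z)=6$), and for the same reason the deduction that ``any orbit that ever exits $\bar{\D}(2)$ must have already started outside it'' is false. The lemma asserts only $K_c \subset \bar{\D}(2)$, which is strictly weaker than invariance of the disk; the disk does leak points, and those points then escape to infinity. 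Fortunately your actual proof never uses these claims. (A further minor slip: $|\bar{z}^2+c|=|z^2+c|$ is false in general---the correct identity is $|\bar{z}^2+c| = |z^2+\bar{c}|$---but this is harmless since all you use is $|\bar{z}^2|=|z|^2$ inside the triangle inequality.)

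The genuine gap is the last step: from $|f_c(z)|>|z|>2$ you conclude ``iterating, $|f_c^n(z)| \to \infty$.'' Strict monotonicity of the moduli $r_n=|f_c^n(z)|$ does not by itself imply divergence, since an increasing sequence may converge to a finite limit; one needs a growth bound that is uniform along the orbit. This is precisely why the paper introduces $\varepsilon$: for $|z|>2+\varepsilon$ it shows $|f_c(z)|>(1+\varepsilon)|z|$, and since the moduli increase, the hypothesis $r_n>2+\varepsilon$ propagates, giving $r_n>(1+\varepsilon)^n r_0 \to \infty$. Your own inequalities repair the gap just as quickly: from $r_{n+1}\ge r_n^2-2$ and $r_n \ge r_0 > 2$ one gets $r_{n+1}-r_n \ge (r_n-2)(r_n+1) \ge (r_0-2)(r_0+1)>0$, a fixed positive increment; alternatively, your bound $|f_c(z)|\ge |z|\left(|z|-1\right)$ together with monotonicity gives $r_{n+1}\ge (r_0-1)\,r_n$ with $r_0-1>1$, hence geometric escape. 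Either one-line addition makes your proof complete and essentially identical to the paper's.
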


\begin{proof}
 Let $\varepsilon>0$. If $|z|>2+\varepsilon$,
 \begin{align*}
  |f_c(z)| &>|z|^2-|c| \ge |z|^2-2 = |z|(|z|-\frac{2}{|z|})
  >|z|(2+\varepsilon-1) \\
  &=|z|(1+\varepsilon).
 \end{align*}
 Hence $|f_c^n(z)| \to \infty$.
\end{proof}

For a periodic point $x$ of odd period $p$,
$\frac{\partial}{\partial \bar{z}}(f_c^p)(x)$ is not a conformal
invariant. 
Hence it is natural to define the \emph{multiplier} of
$x$ as $(f_c^{2p})'(z)$ (the multiplier as a periodic point of $f_c^2$).
Then it is easy to see the following:
\begin{lem}
 A periodic point of odd period has real non-negative multiplier.
\end{lem}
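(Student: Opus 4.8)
The plan is to reduce the statement to a one-line Wirtinger computation. Let $x$ be a periodic point of odd period $p$, so that $f_c^p(x)=x$, and set $h=f_c^p$. Since $h$ is a composition of an odd number of copies of the anti-holomorphic map $f_c$, it is itself anti-holomorphic, and $x$ is a fixed point of $h$. By the definition given above, the multiplier of $x$ is
\[
(f_c^{2p})'(x)=\bigl(h^2\bigr)'(x),
\]
so everything comes down to understanding the holomorphic derivative of the \emph{square} of an anti-holomorphic map at one of its fixed points. (One may note in passing that, since $p$ is odd, $2$ is invertible modulo $p$, so the $f_c^2$-orbit of $x$ coincides with its full $f_c$-orbit and $x$ indeed has period $p$ for $f_c^2$; but this is not needed for the computation.)

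The key claim is that if $h$ is anti-holomorphic with $h(x)=x$, then $\bigl(h^2\bigr)'(x)=\bigl|\tfrac{\partial h}{\partial\bar z}(x)\bigr|^2$. To see this I would invoke the Wirtinger chain rule: for smooth maps,
\[
\frac{\partial (h\circ h)}{\partial z}=\left(\frac{\partial h}{\partial w}\circ h\right)\frac{\partial h}{\partial z}+\left(\frac{\partial h}{\partial \bar w}\circ h\right)\frac{\partial \bar h}{\partial z}.
\]
Anti-holomorphy of $h$ gives $\partial h/\partial z\equiv 0$, which kills the first term and at the same time shows $h^2$ is holomorphic, so its total derivative is the single Wirtinger derivative $\partial(h^2)/\partial z$. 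Using $\partial\bar h/\partial z=\overline{\partial h/\partial\bar z}$ and evaluating at the fixed point $h(x)=x$ yields
\[
\bigl(h^2\bigr)'(x)=\frac{\partial h}{\partial\bar z}(x)\cdot\overline{\frac{\partial h}{\partial\bar z}(x)}=\left|\frac{\partial h}{\partial\bar z}(x)\right|^2,
\]
which is real and non-negative. Combining this with the reduction above proves the lemma.

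The only real care needed---and the one place an error could creep in---is the bookkeeping in the anti-holomorphic chain rule, namely correctly tracking which factor gets conjugated. A completely equivalent and perhaps more transparent route avoids abstract Wirtinger calculus entirely: write $\overline{f_c(z)}=z^2+\bar c$, so that $f_c^2=\phi\circ\bar\phi$ with $\phi(w)=w^2+c$ and $\bar\phi(w)=w^2+\bar c$ both genuinely holomorphic, giving $(f_c^2)'(z)=4z\,\overline{f_c(z)}$ by the ordinary chain rule. Evaluating along the cycle $x_0,\dots,x_{p-1}$ and multiplying gives $(f_c^{2p})'(x_0)=4^p\prod_{i}x_i\,\overline{x_i}=4^p\prod_i|x_i|^2$, once one checks that the two index sets $\{2j\bmod p\}$ and $\{2j+1\bmod p\}$ each exhaust $\{0,\dots,p-1\}$ precisely because $p$ is odd. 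Either way the multiplier is a modulus squared times a positive constant, hence real and non-negative.
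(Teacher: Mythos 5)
Your proof is correct. There is nothing internal to compare it against: the paper offers no argument for this lemma and simply defers to \cite{MR2020986}, so your write-up supplies what the paper omits. Your first (Wirtinger) computation is in fact the standard argument from that reference: for an anti-holomorphic map $h$ with $h(x)=x$, the square $h^2$ is holomorphic and $(h^2)'(x)=\bigl|\tfrac{\partial h}{\partial\bar z}(x)\bigr|^2\ge 0$, and oddness of $p$ is used exactly twice---to make $h=f_c^p$ anti-holomorphic, and (as you note in passing) to ensure the $f_c^2$-orbit of $x$ is the full cycle, so $(f_c^{2p})'(x)$ really is the multiplier being discussed. This route has the advantage of complete generality: it never uses the formula $\bar z^2+c$, so it applies verbatim to any anti-holomorphic map, which is the generality actually needed later in the paper (e.g.\ for odd-period renormalizations $f_c^n:U'\to U$ and their straightenings). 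Your second, elementary argument via $(f_c^2)'(z)=4z\,\overline{f_c(z)}$ and the bijectivity of $j\mapsto 2j\bmod p$ for odd $p$ is also correct, and it buys something the abstract computation hides: the explicit value $4^p\prod_i|x_i|^2$, which makes it visible that the multiplier is strictly positive unless the critical point $0$ lies on the cycle (the superattracting case). Either argument alone suffices; both are sound.
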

See \cite{MR2020986}.

The second iterate $f_c^2$ has four fixed points counted with
multiplicity.
Hence $f_c$ has at most four fixed points.
Indeed the following hold:
\begin{lem}
 \label{lem:num fixed pts}
 The number of fixed points for $f_c$, counted with multiplicity, is 
 \begin{itemize}
  \item 4 when $f_c$ has an attracting or parabolic fixed point 
	(equivalently, when $c$ lies in the closure of the main
	hyperbolic component). In this case there is no period two cycle
	for $f_c$.
  \item 2 otherwise. In this case $f_c$ has a unique period two cycle.
 \end{itemize}
\end{lem}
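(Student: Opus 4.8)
The plan is to count the fixed points by relating them to the four fixed points of the holomorphic degree-four polynomial $f_c^2$, and then to decide their nature (attracting, repelling, or grouped into a two-cycle) by a single index computation. Since $f_c^2(z)-z$ has degree four, it has exactly four roots counted with multiplicity, and every fixed point of $f_c$ is among them; the remaining roots are organized by $f_c$ into two-cycles, because $f_c$ restricts to an involution on the root set of $f_c^2(z)=z$. So a priori $f_c$ has $4$, $2$, or $0$ fixed points, with the complementary number of two-cycles being $0$, $1$, or $2$. The whole lemma reduces to pinning down which case occurs and matching it with the main hyperbolic component.

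First I would compute the multiplier at a fixed point. Differentiating $f_c^2(z)=(z^2+\bar c)^2+c$ gives $(f_c^2)'(z)=4z(z^2+\bar c)$, and conjugating the fixed-point relation $\bar z^2+c=z$ yields $z^2+\bar c=\bar z$, so at a fixed point the multiplier equals $4z\bar z=4|z|^2\ge 0$ (recovering the non-negativity already asserted for odd periods). Hence a fixed point is attracting, parabolic, or repelling according as $|z|<\tfrac12$, $|z|=\tfrac12$, or $|z|>\tfrac12$.

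The key step is an index count for the real map $F(z)=f_c(z)-z=\bar z^2+(c-z)$. Writing $z=x+iy$, a direct computation gives $\det DF=1-4|z|^2$, so the local index of $F$ at a non-degenerate fixed point is $\operatorname{sign}(1-4|z|^2)$, namely $+1$ at an attracting and $-1$ at a repelling fixed point (and $0$ in the parabolic case). On a large circle the dominant term $\bar z^2$ makes $F$ wind $-2$ times, so by the argument principle (a topological degree count) the indices of all fixed points sum to $-2$. If $n_a$ and $n_r$ are the numbers of attracting and repelling fixed points, this forces $n_a-n_r=-2$; in particular $n_a+n_r=2n_a+2$ is even, positive, and at most $4$. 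Therefore $f_c$ has exactly $2$ or $4$ fixed points, never $0$: four fixed points means $n_a=1$, $n_r=3$ (a unique attracting fixed point and no two-cycle, all four roots of $f_c^2(z)=z$ being fixed), while two fixed points means $n_a=0$, $n_r=2$ (no attracting fixed point, the two remaining roots forming the unique two-cycle). Since the main hyperbolic component is exactly the locus where $f_c$ has an attracting fixed point, this identifies the four-fixed-point case with that component.

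The main obstacle is the parabolic boundary case $|z|=\tfrac12$, multiplier $1$, where the index formula degenerates to $0$ and the phrase ``counted with multiplicity'' becomes essential. Here I expect to argue that crossing $\partial(\text{main component})$ is a saddle-node: the attracting fixed point and one repelling fixed point collide at a double root of $f_c^2(z)=z$ (keeping the total index at $-2$ and the count at four with multiplicity, with no genuine two-cycle), and upon leaving the component they reappear as a single two-cycle while the other two fixed points remain repelling. Making this transition rigorous—checking that the colliding pair is precisely attracting-with-repelling, that the emerging two-cycle is nondegenerate and unique, and that the closure of the main component is exactly the four-point locus—is the delicate part; the interior dichotomy itself follows cleanly from the index identity above.
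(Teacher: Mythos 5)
Your interior argument is correct, and it is a genuinely different route from the paper's: the paper gives no proof at all for this lemma, simply citing Lemma~1 of Crowe et al.\ \cite{MR1020441}, where the count is carried out algebraically. Your combination of the multiplier identity $(f_c^2)'(z)=4|z|^2$ at a fixed point, the Jacobian formula $\det DF = 1-4|z|^2$ for $F(z)=\bar z^2+c-z$, the degree $-2$ of $F$ on a large circle, and the involution that $f_c$ induces on the fixed-point set of $f_c^2$ does cleanly give the dichotomy away from parabolic parameters: either $(n_a,n_r)=(1,3)$, so all four roots of the quartic $f_c^2(z)-z$ are fixed by $f_c$ and no two-cycle exists, or $(n_a,n_r)=(0,2)$ and the remaining two roots form the unique two-cycle. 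However, the lemma asserts the statement on the \emph{closure} of the main component, i.e.\ including parabolic parameters, and there your proof has a genuine gap which you yourself flag: the saddle-node picture is described, not proved. Note also that your parenthetical ``and $0$ in the parabolic case'' is unjustified as written: $\operatorname{sign}(\det DF)$ computes the local index only at nondegenerate zeros, and when $\det DF=0$ the index is not the sign of the determinant (it can be $\pm 1$ as well as $0$ for such degenerate zeros).

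The gap can be closed with the machinery you already set up, and more easily than by making the bifurcation analysis rigorous (which would require transversality of the arc crossing). Suppose $f_c$ has a parabolic fixed point $z_0$ (so $4|z_0|^2=1$, multiplier $1$) and, for contradiction, also a two-cycle. Since the multiplier of $f_c^2$ at $z_0$ is $1$, the point $z_0$ is a root of the degree-four polynomial $f_c^2(z)-z$ of multiplicity at least $2$, and the two-cycle accounts for two further roots; these exhaust all four roots, so $z_0$ is the \emph{only} fixed point of $f_c$, and your degree count forces its local index to equal $-2$. But $DF(z_0)$ has rank exactly one (its diagonal entries $2x_0-1$ and $-2x_0-1$ cannot vanish simultaneously), and a planar map germ whose linearization has rank one has index in $\{-1,0,1\}$: after a coordinate change the first component is a submersion, its zero set is a smooth arc through the zero, and the index is read off from the sign change of the second component restricted to that arc. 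This contradiction shows a parabolic fixed point excludes any two-cycle, whence all four roots of $f_c^2(z)-z$ are fixed points of $f_c$ and the count with multiplicity is $4$. Combined with your interior dichotomy and the standard identification of the attracting-fixed-point locus with the main component (it is the image of $\{|z|<1/2\}$ under the injective map $z\mapsto z-\bar z^2$, hence open and connected), this yields the full lemma.
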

See \cite[Lemma~1]{MR1020441}.

Let $\cAp = -1.7548...$ be the airplane parameter, i.e., the unique
parameter $c<0$ satisfying $f_c^3(0)=0$.
\begin{lem}
 \label{lem:three per3 superattr}
 There are exactly three $c \in \C$ for which 0 is a periodic point of exact
 period 3 for $f_c$. They are $\cAp$, $\omega \cAp$ and $\omega^2 \cAp$
 where $\omega=\frac{-1+\sqrt{3}i}{2}$.
\end{lem}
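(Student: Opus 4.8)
The plan is to reduce the anti-holomorphic equation $f_c^3(0)=0$ to a single equation in one auxiliary complex variable and then count its solutions in polar coordinates. First I would compute the third iterate of the critical value explicitly: since $f_c^2(0)=\bar c^2+c$ and conjugation gives $\overline{f_c^2(0)}=c^2+\bar c$, one finds
\[
 f_c^3(0)=(\overline{f_c^2(0)})^2+c=(c^2+\bar c)^2+c.
\]
Thus the parameters for which $0$ has period dividing $3$ are exactly the zeros of $g(c):=(c^2+\bar c)^2+c$. I would first dispose of the degenerate cases: $c=0$ is a zero (giving $0$ a fixed point), and if $g(c)=0$ with $f_c^2(0)=\bar c^2+c=0$, then $f_c^3(0)=f_c(0)=c$, forcing $c=0$; hence every \emph{nonzero} zero of $g$ makes $0$ periodic of exact period $3$.

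Next I would introduce $u=c^2+\bar c$, so that the equation becomes $u^2+c=0$, i.e.\ $c=-u^2$. Substituting back into $u=c^2+\bar c$ yields $u=u^4-\bar u^2$, that is
\[
 \bar u^2=u^4-u.
\]
The correspondence $c\mapsto u=c^2+\bar c$, with inverse $u\mapsto c=-u^2$, is a bijection between the two zero sets: one checks that $c=-u^2$ satisfies $c^2+\bar c=u$ precisely when $\bar u^2=u^4-u$. So it suffices to count solutions of $\bar u^2=u^4-u$.

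The key computation is to solve this last equation in polar form $u=re^{i\theta}$. Writing it as $r^3e^{2i\phi}=r+e^{i\phi}$ with $\phi=3\theta$ (for $r>0$) and separating real and imaginary parts, the imaginary part factors as $\sin\phi\,(2r^3\cos\phi-1)=0$. I expect the branch $2r^3\cos\phi=1$ to be empty: substituting it into the real part collapses everything to $r^3+r=0$, impossible for $r>0$. On the branch $\sin\phi=0$, the case $\phi\equiv\pi$ gives $r^3-r+1=0$, which is positive for all $r>0$ and so has no positive root, while $\phi\equiv0$ gives $r^3-r-1=0$, with a unique positive root $r_*\approx1.3247$. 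Since $\phi=3\theta\equiv0\pmod{2\pi}$ allows exactly $\theta\in\{0,2\pi/3,4\pi/3\}$, this produces exactly three nonzero solutions $u=r_*e^{i\theta}$, hence (together with $u=0\leftrightarrow c=0$) exactly three nonzero solutions $c=-u^2$. Translating back, the $\theta=0$ solution is the real negative parameter $-r_*^2\approx-1.7548$, which by its uniqueness among negative reals is $\cAp$, and the remaining two are $\omega^2\cAp$ and $\omega\cAp$; equivalently, this reflects the three-fold symmetry $c\mapsto\omega c$ (which preserves the orbit of $0$ and acts as $u\mapsto\omega^2u$).

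The main obstacle is organizational rather than deep: one must verify that the substitution $u=c^2+\bar c$ is genuinely bijective on the solution sets so that no spurious or missing solutions are introduced — crucially, the computation never squares the equation, it only re-parametrizes it — and one must carry the full case split $\sin\phi=0$ versus $2r^3\cos\phi=1$ so as not to overlook hidden solutions lying off the three symmetry lines. Once the equation is in the form $r^3e^{2i\phi}=r+e^{i\phi}$, the rest is an elementary analysis of the two cubics $r^3-r\mp1$.
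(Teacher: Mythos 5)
Your proof is correct, and it takes a genuinely different route from the paper's. The paper works in the real symmetric variables $s=c+\bar c$, $t=|c|^2$: it expands $2\re f_c^3(0)$ and $2\im f_c^3(0)$ as polynomials in $s,t$ (and the factor $c-\bar c$), then splits into two cases. For $c\in\R$ it simply refers back to the real quadratic family $Q_c(z)=z^2+c$ (giving $c=0$ or $c=\cAp$); for $c\notin\R$, vanishing of the imaginary part is solved for $t=\tfrac12\tfrac{s^3-s+1}{s-1}$, substitution into the real part factors as $(s^3-3s+3)(s^3-2s^2+s-1)=0$, the first cubic is excluded because it forces $|c|=1$ together with $\re c<-1$, and the second gives $s=-\cAp$, $t=s^2$, hence $c=\omega\cAp,\omega^2\cAp$. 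Your substitution $u=c^2+\bar c$, with the verified inverse $c=-u^2$ on the solution sets, replaces this elimination by the single equation $\bar u^2=u^4-u$, and the polar step (with $\phi=3\theta$) handles all parameters uniformly: there is no real/non-real case split, no appeal to facts about the real family, and the threefold symmetry of the answer is visible from the outset. I checked the details: the branch $2r^3\cos\phi=1$ does collapse to $r^3+r=0$ via $\cos 2\phi=\tfrac{1}{2r^6}-1$, the cubic $r^3-r+1$ is indeed positive on $r>0$, and $r^3-r-1$ has a unique positive root $r_*$, so your count of exactly three nonzero solutions is right. A pleasant bonus of your method is the closed form $\cAp=-r_*^2$ with $r_*$ the positive root of $r^3=r+1$; this is consistent with the paper, since substituting $s=r_*^2$ into the paper's cubic gives $s^3-2s^2+s-1=(r_*^2+2r_*+1)-2(r_*^2+r_*)+r_*^2-1=0$, i.e., the paper's $s=-\cAp$ equals your $r_*^2$. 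What the paper's route buys instead is that it stays entirely within polynomial elimination over $\R$ and identifies $\cAp$ by direct reduction to the familiar airplane parameter; both arguments ultimately reduce to locating the real roots of a pair of cubics (yours $r^3-r\mp1$, the paper's $s^3-2s^2+s-1$ and $s^3-3s+3$).
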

More generally, the number of hyperbolic components of a given period is
counted in \cite{Mukherjee:2014ab}, but we give an elementary proof here
for completeness.

\begin{proof}
 Let $c_2 = f_c^3(0) = c^4+2c^2\bar{c}+c$ and let $s=c+\bar{c}$ and $t
 = c\bar{c}=|c|^2$. Then we have
 \begin{equation}
  \label{eqn:per3}
 \begin{aligned}
  2\re c_2 &=s^4+(1-4t)s^2+(1+2t)s+2t^2-2t,\\
  2\im c_2 &= (c-\bar{c})(s^3-(s-1)(1+2t)).
 \end{aligned}
 \end{equation}

 Now assume $c_2=0$.

 \noindent {\bfseries Case I: $c-\bar{c}=0$.} 
 Since $c$ is real, one can consider $Q_c(z)=z^2+c$
 instead of $f_c$. Hence it follows that $c=0$
 or $c=\cAp$.

 \noindent {\bfseries Case II: $c-\bar{c}\ne 0$.}
 Then we have $t=\frac{1}{2}\frac{s^3-s+1}{s-1}$. By
 substitution, we have
 \[
 (s^3-3s+3)(s^3-2s^2+s-1)=0.
 \]
 \noindent {\bfseries Case II-1: $s^3-3s+3=0$.}
 Then we have $t=1$ and $s<-2$,
 hence it follows $|c| = 1$ and $\re c < -1$ and this is a contradiction.

 \noindent {\bfseries Case II-2: $s^3-2s^2+s-1=0$.}
 Since $s$ is real,
 we have $s=-\cAp$.
 Then we have $t=\frac{1}{2}\frac{s^3+(s^3-2s^2)}{s-1}=s^2$ and 
 $c=\omega \cAp, \omega^2\cAp$.

 Therefore the solutions of \eqref{eqn:per3} are $c=0$, $\omega^k \cAp$
 ($k=0,1,2$). The case $c=0$ corresponds to the case that $0$ is a fixed
 point, hence the lemma follows. 
\end{proof}

\section{External rays and rational laminations}
\label{sec:ray-lamination}

When $K_c$ is connected, external rays for $f_c$ are defined in a usual
way; $f_c^2$ is a monic centered polynomial, hence the B\"ottcher
coordinate $\varphi_c:\C \setminus K_c \to \C \setminus \bar{\D}$ for
$f_c^2$ is defined, and indeed it satisfies 
\begin{equation}
 \label{eqn:Boettcher}
 \varphi(f_c(z)) = (\overline{\varphi(z)})^2.
\end{equation}
Let 
\[
 R_c(\theta) = \varphi^{-1}(\{re^{2\pi i \theta};\ r \in (1,\infty)\})
\]
be the \emph{external ray} of angle $\theta \in \R/\Z$ for $f_c$.
By the equation \eqref{eqn:Boettcher}, we have $f_c(R_c(\theta)) =
R_c(-2\theta)$.

\begin{defn}[Rational laminations]
 Let $d \in \Z\setminus\{0\}$ and denote $m_d(\theta)=d\theta$.
 An equivalence relation $\lambda$ on $\Q/\Z$ is called a
 \emph{$d$-invariant rational lamination} if the following hold:
 \begin{enumerate}
  \item $\lambda$ is closed in $(\Q/\Z)^2$.
  \item Every equivalence class is finite.
  \item For any equivalence class $A$, $m_d(A)$ is also an equivalence
	class.
  \item $m_d:A \to m_d(A)$ is consecutive-preserving, i.e., if
	$(\theta_1,\theta_2)$ is a component of $\R/\Z \setminus A$,
	Then $(d\theta_1,d\theta_2)$ is a component of $\R/\Z \setminus dA$.
  \item Equivalence classes are pairwise unlinked, i.e., for any
	$\lambda$-classes $A$ and $B$, $A$ is contained in a connected
	component of $\R/\Z \setminus B$.
 \end{enumerate}
 A \emph{real lamination} is an equivalence relation on $\R/\Z$
 satisfying the same condition replacing $\Q/\Z$ by $\R/\Z$.

 Assume $K_c$ is connected.
 The \emph{rational lamination} $\lambda(f_c)$ for $f_c$ is an
 equivalence relation 
 on $\Q/\Z$ such that $\theta$ and $\theta'$ are equivalent
 if and only if $R_c(\theta)$ and $R_c(\theta')$ lands at the same point.
\end{defn}

By the equation \eqref{eqn:Boettcher},
$\lambda(f_c)$ is a $(-2)$-invariant rational lamination.
More generally, for a holomorphic or anti-holomorphic polynomial of
degree $d \ge 2$ with connected Julia set, the rational lamination 
$\lambda(f)$ for $f$ is defined in the same way and it is 
$d$-invariant if $f$ is holomorphic, and $(-d)$-invariant if $f$ is
anti-holomorphic.
Note that the topological degree of a degree $d$ polynomial is $d$ if
it is holomorphic, and $-d$ if it is anti-holomorphic.

\begin{defn}[Unlinked classes and real extensions]
 Let $\lambda$ be a rational lamination.
 We say $\theta_1,\theta_2 \in (\R\setminus \Q)/\Z$ is
 \emph{$\lambda$-unlinked} if $\{\theta_1,\theta_2\}$ is unlinked with
 every $\lambda$-class. It is an equivalence relation and an equivalence
 class is called a \emph{$\lambda$-unlinked class}.

 Let $\bar{\lambda}$ denote the closure of $\lambda$ in $(\R/\Z)^2$
 and $\hat{\lambda}$ denote the smallest equivalence relation on $\R/\Z$
 containing $\bar{\lambda}$. We call $\hat{\lambda}$ the \emph{real
 extension} of $\lambda$.
\end{defn}
The real extension is a $d$-invariant real lamination \cite{MR1810538}. 
Its non-trivial equivalence classes are either $\lambda$-class or
a finite $\lambda$-unlinked class.

\begin{defn}[Critical elements]
 Let $\lambda$ be a rational lamination. For a $\lambda$-class or
 a $\lambda$-unlinked class $A$, $m_d:A \to m_d(A)$ is $\delta(A)$-to-one
 for some $\delta(A) \in \N$. We call $\delta(A)$ the \emph{degree}
 of $A$ and we say $A$ is \emph{critical} if $\delta(A)>1$.
 We call a critical element $A$ \emph{Julia critical element}
 if it is a $\lambda$-class or a finite $\lambda$-unlinked class, 
 and \emph{Fatou critical element} if it is an infinite
 $\lambda$-unlinked class.

 We denote by $\crit(\lambda)$ the set of critical elements for
 $\lambda$ and the critical orbit by
 \[
  CO(\lambda)=\{m_d^n(C);\ C \in \crit(\lambda),\ n\ge 0\}.
 \]
\end{defn}

\begin{defn}
 A $d$-invariant real lamination is \emph{hyperbolic} if for any
 $\lambda$-class $A$, $m_d:A \to A$ is one-to-one.

 A $d$-invariant rational lamination is \emph{hyperbolic} if its
 real extension is hyperbolic.
\end{defn}
In other words, a rational lamination is hyperbolic 
if all critical elements are Fatou critical elements.

The following is proved in \cite{MR1810538} (see \cite[Theorem~5.17]{MR2970463}).
\begin{lem}
 \label{lem:Kiwi-realization}
 Let $d \in \Z$ satisfy $|d| \ge 2$.
 For any hyperbolic $d$-invariant rational lamination, there exists a
 unique hyperbolic post-critically finite polynomial $f$ of degree
 $|d|$, holomorphic when $d>0$ and anti-holomorphic when $d<0$ such that
 $\lambda(f)=\lambda$.
\end{lem}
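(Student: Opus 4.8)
The plan is to realize $\lambda$ as the rational lamination of a polynomial by first building a topological model and then importing a complex structure via Thurston's theorem; this is Kiwi's theorem for $d>0$ and its extension in \cite[Theorem~5.17]{MR2970463} for $d<0$. I would organize the argument in three stages: a combinatorial/topological construction, a realization-and-rigidity step, and a reduction handling the anti-holomorphic case.

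First I would pass to the real extension $\hat{\lambda}$, which is a $d$-invariant real lamination, and form the quotient $J_\lambda=(\R/\Z)/\hat{\lambda}$ with the induced dynamics $\bar{m}_d$. Because $\lambda$ is hyperbolic, every critical element is a Fatou critical element, i.e. an infinite $\lambda$-unlinked class of degree $\delta>1$, and each such class is (pre)periodic. The complementary gaps organize into a countable family of ``Fatou disks'' arranged in a tree-like pattern, and I would glue topological disks along the lamination to produce a branched self-covering $F$ of $S^2$ of degree $|d|$ that fixes $\infty$ with local degree $|d|$, sends each Fatou disk to another by a model power map $w\mapsto w^{\delta}$, and has $J_\lambda$ as its topological Julia set. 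This $F$ is postcritically finite: all critical points lie in the Fatou disks, are eventually periodic, and the postcritical set meets $J_\lambda$ only in the finitely many landing points governed by $CO(\lambda)$.

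Second I would invoke Thurston's characterization theorem. One checks that either the orbifold of $F$ is hyperbolic or $F$ lies in the explicit list of Euclidean-orbifold maps (power and Chebyshev maps), which are realized directly. In the hyperbolic-orbifold case one verifies that $F$ carries no Thurston obstruction: since every critical orbit is absorbed by a superattracting cycle, $F$ is expanding near $J_\lambda$, which forces the transition matrix of any $F$-invariant multicurve to have spectral radius strictly below one. Thurston's theorem then yields a postcritically finite rational map, unique up to M\"obius conjugacy, combinatorially equivalent to $F$; because $\infty$ is a totally invariant superattracting fixed point of local degree $|d|$, this map is a polynomial of degree $|d|$. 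Normalizing it to be monic and centered and matching the external angles recorded by $\lambda$ produces a holomorphic $f$ with $\lambda(f)=\lambda$, and Thurston rigidity gives uniqueness.

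Third, for $d<0$ I would exploit that $\lambda$ is invariant under $\theta\mapsto -d\theta = m_{-1}\circ m_d$. Passing to the holomorphic square, the associated $d^2$-invariant lamination is realized by the previous step as a polynomial $g$; the built-in symmetry $\theta\mapsto-\theta$ forces $g$ to commute with complex conjugation in the monic centered normalization, and I would extract the anti-holomorphic square root $f$ with $f^2=g$ and $\lambda(f)=\lambda$, again unique by rigidity. Alternatively one runs an anti-holomorphic version of Thurston's theorem directly. I expect the main obstacle to be the realization step itself: ruling out a Thurston obstruction for $F$, and, in the anti-holomorphic case, selecting the conjugation-symmetric square root rather than an arbitrary preimage under squaring.
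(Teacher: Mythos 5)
The paper never proves this lemma: it is quoted directly from Kiwi \cite{MR1810538} via \cite[Theorem~5.17]{MR2970463}, so your attempt has to be measured against the arguments in those references. Your first two stages (quotient model for $\hat{\lambda}$, gluing of Fatou disks, realization by Thurston's theorem, uniqueness by rigidity) follow the standard strategy for the holomorphic case $d>0$ and are consistent in spirit with the cited proofs, although the no-obstruction step is glossed: a topological branched cover is not ``expanding'' in any a priori sense, and the usual route for topological polynomials is to reduce Thurston obstructions to Levy cycles and rule those out using the hyperbolicity of $\lambda$ (no critical Julia classes). That gap is of the fillable kind.

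The third stage, however, contains a genuine error, and it occurs exactly in the case this paper needs. For $d<0$, $d$-invariance means invariance under $\theta\mapsto d\theta$; it does \emph{not} give invariance under $\theta\mapsto -d\theta=|d|\theta$, nor under $\theta\mapsto-\theta$, and your stage 3 assumes both. Concretely, take $d=-2$ and $c=\omega\cAp$ (the rotated airplane), and let $\lambda=\lambda(f_c)$, which is $(-2)$-invariant and hyperbolic. Since external rays of $f_c$ and of $f_c^2$ coincide, the monic centered quartic $g=f_c^2=Q_c\circ Q_{\bar{c}}$ is postcritically finite, hyperbolic, and satisfies $\lambda(g)=\lambda$; by the degree-$4$ uniqueness your stages 1--2 provide, $g$ is \emph{the} realizing quartic. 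But $g$ does not commute with complex conjugation: $\overline{g(\bar{z})}=Q_{\bar{c}}\circ Q_c$, and the two compositions have constant terms $\bar{c}^2+c$ and $c^2+\bar{c}$, which differ whenever $c\notin\R$ and $\re c\ne 1/2$. For the same reason $-\lambda\ne\lambda$: one has $-\lambda=\lambda(f_{\bar{c}})$, and if $-\lambda=\lambda$ held, degree-$4$ uniqueness would force $g=\overline{g(\bar{z})}$, a contradiction. So there is no ``built-in symmetry $\theta\mapsto-\theta$,'' $g$ has no conjugation-symmetric normalization, and the anti-holomorphic square root cannot be extracted as you describe; your reduction only covers the negation-symmetric (real) slice. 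The correct symmetry to exploit is the factorization of $m_{d^2}$ through $m_d$: realize the \emph{pair} of laminations $(\lambda,-\lambda)$ over the two-vertex mapping schema (the family $\{(P,\bar{P})\}$ described in Appendix~\ref{sec:app-streightening}, i.e.\ Inou--Kiwi's polynomials over a mapping schema); uniqueness over the schema forces the realizing pair $(h_0,h_1)$ to satisfy $h_1(z)=\overline{h_0(\bar{z})}$, and then $f=\sigma\circ h_0$ with $\sigma(z)=\bar{z}$ is the desired anti-holomorphic polynomial with $f^2=g$ and $\lambda(f)=\lambda$. (Alternatively, the anti-holomorphic Thurston argument you mention only in passing can be carried out directly, but some such device is unavoidable.)
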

Since $f$ is hyperbolic, the Julia set $J(f)$ is locally connected
and $J(f) \simeq S^1/\hat{\lambda}$. Moreover, there is a one-to-one
correspondence between bounded Fatou components of $f$ and infinite
$\lambda$-unlinked classes.

\section{Renormalizations}
\label{sec:ren}

\begin{defn}[Polynomial-like maps]
 We say a map $g:U' \to U$ \emph{polynomial-like} if
 \begin{itemize}
  \item $U' \Subset U$ are topological disks in $\C$.
  \item $g:U' \to U$ is holomorphic or anti-holomorphic, and proper.
 \end{itemize}
 The \emph{filled Julia set} $K(g)$ and the \emph{Julia set}
 $J(g)$ is defined as follows:
 \[
  K(g) = \{z \in U';\ g^n(z) \in U'\ (n \in \N)\}, \quad
 J(g) = \partial K(g).
 \]
 
 A \emph{quadratic-like mapping} is a polynomial-like mapping of
 degree 2.
\end{defn}

\begin{defn}[Hybrid equivalence]
 Two polynomial-like mappings $f:U' \to U$ and $g:V' \to V$ are
 \emph{hybrid equivalent} if there exists a quasiconformal homeomorphism
 $\varphi:U'' \to V''$ between neighborhoods $U''$ and $V''$ of $K(f)$
 and $K(g)$ respectively, such that $\varphi\circ f = g \circ \varphi$
 where both sides are defined and $\bar{\partial}\varphi \equiv 0$
 almost everywhere in $K(f)$. 
\end{defn}

\begin{thm}[Straightening theorem]
 Any polynomial-like mappings $g:U' \to U$ is hybrid equivalent to a
 holomorphic or anti-holomorphic polynomial $P$ of the same degree.
 Moreover, if $K(g)$ is connected, then $P$ is unique up to affine conjugacy.
\end{thm}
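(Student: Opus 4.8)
The plan is to prove this by quasiconformal surgery, following the Douady--Hubbard approach but carrying the anti-holomorphic case along in parallel. Write $d$ for the degree and let $P_0$ denote the model map, namely $P_0(z)=z^d$ in the holomorphic case and $P_0(z)=\bar{z}^d$ in the anti-holomorphic case. First I would extend $g:U'\to U$ to a quasiregular degree-$d$ branched cover $G$ of the sphere $\hat{\C}$ that agrees with $g$ on $U'$ and with $P_0$ outside a large round disk. The geometric input is that $A=U\setminus\overline{U'}$ is a topological annulus on which $g|_{\partial U'}$ is a degree-$d$ covering onto $\partial U$; one builds a smooth degree-$d$ interpolation across $A$ matching $g$ on $\partial U'$ and $P_0$ on an outer circle, and sets $G=P_0$ beyond that circle. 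This $G$ is smooth and quasiregular, with all of its non-conformality concentrated on the single fundamental annulus $A$.

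The second step is to produce a $G$-invariant Beltrami coefficient $\mu$. I would set $\mu=0$ on $K(g)$ and on the exterior where $G=P_0$, prescribe $\mu$ on $A$ to be the pullback of the standard structure under the interpolating map, and then spread it over the rest of $U\setminus K(g)$ by pulling back under iterates of $G$. The key estimate is that every point of $U'\setminus K(g)$ visits the fundamental annulus $A$ exactly once under forward iteration, so the dilatation never compounds and $\|\mu\|_\infty<1$ with the bound coming solely from the interpolation on $A$. In the anti-holomorphic case the pullback formula for Beltrami coefficients acquires complex conjugations, but $|\mu|$ transforms in the same way, so the same one-visit argument yields the identical dilatation bound.

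The third step applies the measurable Riemann mapping theorem to integrate $\mu$, yielding a quasiconformal homeomorphism $\varphi$ of $\hat{\C}$ fixing $\infty$ with $\varphi^{*}\mu_{\mathrm{std}}=\mu$. Then $P=\varphi\circ G\circ\varphi^{-1}$ is conformal (respectively anti-conformal) off a set of measure zero, hence holomorphic (respectively anti-holomorphic), of degree $d$, and fixes $\infty$ with local model $z^d$; thus $P$ is a polynomial of degree $d$ of the required type. Since $\mu=0$ on $K(g)$, the restriction of $\varphi$ to a neighborhood of $K(g)$ satisfies $\bar{\partial}\varphi\equiv0$ a.e.\ on $K(g)$ and conjugates $g$ to $P$, so it is the desired hybrid equivalence.

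For uniqueness when $K(g)$ is connected, suppose $P_1$ and $P_2$ are hybrid equivalent to $g$ via $\varphi_1,\varphi_2$, and set $\psi=\varphi_2\circ\varphi_1^{-1}$. Then $\psi$ is quasiconformal near $K(P_1)$, conjugates $P_1$ to $P_2$, and satisfies $\bar{\partial}\psi=0$ a.e.\ on $K(P_1)$. Using the B\"ottcher coordinates of $P_1,P_2$ near $\infty$ one extends $\psi$ to a global quasiconformal conjugacy whose complex dilatation is supported off $K(P_1)$; the relation $\psi\circ P_1=P_2\circ\psi$ forces this dilatation to be invariant under the $z^d$-dynamics on the connected basin of infinity, and a standard rigidity argument shows it vanishes a.e., so $\psi$ is conformal, hence affine. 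I expect the main obstacle to be the careful bookkeeping of conjugations in the anti-holomorphic pullback of the complex structure, together with checking that the interpolating map on $A$ can be chosen with the correct degree and controlled dilatation in both cases; the one-visit argument that caps the dilatation is the heart of the surgery, and verifying it respects the anti-holomorphic symmetry is where the real care is needed.
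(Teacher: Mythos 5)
Your existence argument is the standard Douady--Hubbard surgery, and it is essentially what the paper points to: the paper gives no proof of its own, simply citing \cite{MR816367} for the holomorphic case and \cite{hubbard_multicorns_2014} for the anti-holomorphic adaptation. The extension across the fundamental annulus, the one-visit bound on the dilatation, the measurable Riemann mapping theorem, and the observation that anti-holomorphic pullback preserves $|\mu|$ are all correct and complete in outline.

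The uniqueness half, however, contains a genuine gap, located exactly at the step you call standard. First, the extension is not automatic: the hybrid conjugacy $\psi$ defined near $K(P_1)$ and the B\"ottcher conjugacy $\phi_2^{-1}\circ\phi_1$ defined on $\C\setminus K(P_1)$ do not agree on their overlap, so one cannot simply ``extend $\psi$ using B\"ottcher coordinates'' to a single global conjugacy. Douady--Hubbard resolve this with a pullback argument: interpolate once to get a quasiconformal map that is a conjugacy only outside an annulus, then lift repeatedly through $P_1$ and $P_2$ (lifting through holomorphic or anti-holomorphic maps does not increase dilatation) and pass to the limit, the non-conjugacy region shrinking down to the Julia set. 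Second, and more seriously, the rigidity principle you invoke is false: a Beltrami coefficient on the basin of infinity that is invariant under the dynamics need not vanish. Concretely, for $P_1=P_2=z^2$ the map defined as the identity on $\bar{\D}$ and by $z\mapsto z|z|^{s}$ outside is a global quasiconformal self-conjugacy whose dilatation is invariant under $z\mapsto z^2$, supported off $K=\bar{\D}$, and nonzero almost everywhere there; such invariant structures on the basin of infinity are exactly what the Branner--Hubbard wring construction exploits. The correct logic is the reverse of yours: the limit conjugacy produced by the lifting argument is conformal off $K(P_1)$ \emph{by construction} (it coincides there with lifts of the B\"ottcher conjugacy), its dilatation on $K(P_1)$ vanishes a.e.\ by the hybrid hypothesis, so the total dilatation vanishes a.e., and Weyl's lemma then shows the conjugacy is conformal, hence affine. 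With the dilatation placed off $K(P_1)$, as in your write-up, Weyl's lemma cannot be applied and no invariance argument will rescue the conclusion.
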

See \cite{MR816367} and \cite{hubbard_multicorns_2014}.

\begin{defn}[Renormalizations and straightenings]
 We say $f_c$ is \emph{renormalizable} if
 there exist $U'$, $U$ and $n>1$ such that $f_c^n:U' \to U$ is a
 quadratic-like mapping with connected filled Julia set.

 Such a mapping $f_c^n:U' \to U$ is called a \emph{renormalization}
 of $f_c$ and $n$ is called the \emph{period} of it.

 By the straightening theorem, there exists a monic centered
 holomorphic or anti-holomorphic quadratic polynomial $P$ hybrid
 equivalent to $f_c^n:U' \to U$, up to affine conjugacy.
 We call $P$ the \emph{straightening} of it.
\end{defn}


Take $c_0 \in \cM^*$ such that $0$ is periodic of period $n>1$ by
$f_{c_0}$. We call such $c_0$ simply a \emph{center}
(precisely speaking, it is the center of a hyperbolic component of $\Int
\cM^*$) and $n$ the \emph{period} for $c_0$.
Let $\lambda_0 = \lambda(f_{c_0})$ be the rational lamination for
$f_{c_0}$. Define the combinatorial renormalization locus
$\cC(c_0)$ and the
renormalization locus $\cR(c_0)$ with combinatorics $\lambda_0$ as
follows: Let
\[
 \cC(c_0) = \{c \in \cM^*;\ \lambda(f_c) \supset \lambda_0\}
\]
(recall that a relation on $\Q/\Z$ is a subset of $(\Q/\Z)^2$).
Let $c \in \cC(c_0)$. By definition, the external rays of
$\lambda_0$-equivalent angles for $f_c$ 
land at the same point. Hence those rays divide $K_c$ into
``\emph{fibers}'' 
(see Appendix~\ref{sec:app-tree-renorm} for the precise definition). 
Let $K$ be the fiber containing the critical point. Then
$f_c^n(K)=K$. We say $f_c$ is \emph{$c_0$-renormalizable} if
there exists a (holomorphic or anti-holomorphic) quadratic-like
restriction $f_c^n:U' \to U$ such that the filled 
Julia set is equal to $K$. Let
\[
 \cR(c_0) = \{c \in \cC(c_0);\ c_0\mbox{-renormalizable}\}.
\]
We call such a renormalization a \emph{$c_0$-renormalization}.
(See the definition of \emph{$\lambda_0$-renormalization} in
\cite{MR2970463} for the precise definition.)
We call $n$ the \emph{renormalization period}.

In the following, we fix such $c_0$ (or a hyperbolic rational lamination
$\lambda_0=\lambda(f_{c_0})$).
For $c \in \cR(c_0)$, let $P$ be the
straightening of a $c_0$-renormalization of $f_c$.
By the straightening theorem, $P$ is well-defined.
When the renormalization period $n$ is even, 
then $c_0$-renormalization is holomorphic.
Hence $P=Q_{c'}$ for a unique $c' \in \cM$.
When $n$ is odd, the $c_0$-renormalization is anti-holomorphic,
so $P = f_{c'}$ for some $c' \in \cM^*$.
In either case, we denote $c'$ by $\chi_{c_0}(c)$ and let
\[
 \cM(c_0) =
 \begin{cases}
  \cM & n\mbox{: even,}\\
  \cM^* & n\mbox{: odd.}
 \end{cases}
\]
Although we have three choices for $c'$ when the period is odd,
we can naturally choose one
by fixing an \emph{internal marking} (for the precise definition, see
Appendix~\ref{sec:app-streightening} and \cite{MR2970463}).

\begin{defn}
 We call the map $\chi_{c_0}:\cR(c_0) \to \cM(c_0)$
 above the \emph{straightening map} for $c_0$.

 We call $\cC(c_0)$ a \emph{baby Mandelbrot-like set} if the
 renormalization period $n$ is even,
 and a \emph{baby tricorn-like set} if $n$ is odd.
 If $\chi_{c_0}$ can be extended to a homeomorphism between
 $\cC(c_0)$ and $\cM(c_0)$, then we call $\cC(c_0)$
 a \emph{baby Mandelbrot set} (resp.\ a \emph{baby tricorn})
 if the renormalization period is even (resp.\ odd).
\end{defn}
As noted above, 
if the period $n$ is odd,
then $\omega\chi_{c_0}$ and $\omega^2\chi_{c_0}$ are also straightening
maps (with different internal markings).
In this paper, we always choose and fix one of those.

\begin{defn}
 We call a center $c_0 \in \cM^*$ (or $f_{c_0}$, $\lambda_0$) 
 \emph{primitive} if the closures of Fatou components for $f_{c_0}$ are
 mutually disjoint.
\end{defn}

By applying results in \cite{MR2970463}, we have the following:

\begin{thm}[Injectivity]
 \label{thm:IH-inj}
 $\chi_{c_0}:\cR(c_0) \to \cM(c_0)$ is injective.
\end{thm}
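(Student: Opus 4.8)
The plan is to deduce this from the general injectivity theorem for straightening maps established in \cite{MR2970463}: as explained in Appendix~\ref{sec:app-streightening}, the map $\chi_{c_0}$ considered here is the restriction to the anti-holomorphic slice of the straightening map $\chi_{\lambda_0}$ defined there for the rational lamination $\lambda_0=\lambda(f_{c_0})$, so injectivity of $\chi_{c_0}$ reduces to injectivity of $\chi_{\lambda_0}$. Below I outline the argument underlying that statement. Suppose $c_1,c_2 \in \cR(c_0)$ satisfy $\chi_{c_0}(c_1)=\chi_{c_0}(c_2)=c'$. By definition the $c_0$-renormalizations $f_{c_1}^n:U_1'\to U_1$ and $f_{c_2}^n:U_2'\to U_2$ both straighten to the same quadratic polynomial $P$ (equal to $Q_{c'}$ or $f_{c'}$ according to the parity of $n$), so by the straightening theorem they are hybrid equivalent to each other. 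Let $\phi$ be such a hybrid equivalence: a quasiconformal homeomorphism between neighborhoods of the two renormalized filled Julia sets $K^{(1)}$ and $K^{(2)}$ conjugating $f_{c_1}^n$ to $f_{c_2}^n$ with $\bar\partial\phi\equiv 0$ almost everywhere on $K^{(1)}$.

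The key step is to upgrade $\phi$ to a \emph{global} quasiconformal conjugacy $\Phi$ between $f_{c_1}$ and $f_{c_2}$ on all of $\C$. Here I would use that both parameters lie in $\cC(c_0)$, so $\lambda(f_{c_1})\supset\lambda_0$ and $\lambda(f_{c_2})\supset\lambda_0$, and the two maps share the same external combinatorics: the rays of $\lambda_0$-equivalent angles partition $K_{c_i}$ into fibers in the same pattern, the critical fiber $K^{(i)}$ together with its finitely many distinct forward images under $f_{c_i}$ carries the whole renormalization, and, because the renormalizations straighten to the same $P$, the internal combinatorics inside the critical fiber and its images also agree. One then extends $\phi$ outward through the B\"ottcher coordinate near infinity, where $f_{c_1}$ and $f_{c_2}$ are already conjugated by $\varphi_{c_2}^{-1}\circ\varphi_{c_1}$ via \eqref{eqn:Boettcher}, and pulls it back equivariantly over the countably many remaining fibers, interpolating quasiconformally in the complementary annular regions. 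The outcome is a quasiconformal $\Phi$ with $f_{c_2}\circ\Phi=\Phi\circ f_{c_1}$ and $\bar\partial\Phi\equiv 0$ almost everywhere on $K_{c_1}$.

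Controlling the dilatation of this interpolation and verifying equivariance over the complementary pieces is exactly the surgery carried out in \cite{MR2970463}, and this is the main obstacle; I would invoke their construction rather than reproduce it, since the whole point of the sentence ``by applying results in \cite{MR2970463}'' is that this global-conjugacy machinery is already available in that framework.

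Finally I would conclude by rigidity. Since $\Phi$ agrees with the conformal map $\varphi_{c_2}^{-1}\circ\varphi_{c_1}$ off $K_{c_1}$ and satisfies $\bar\partial\Phi\equiv 0$ almost everywhere on $K_{c_1}$, it is conformal on all of $\C$, hence affine, say $\Phi(z)=az+b$. Imposing $f_{c_2}\circ\Phi=\Phi\circ f_{c_1}$ on $f_c(z)=\bar z^2+c$ forces $b=0$, $\bar a^2=a$, and $c_2=a c_1$, so that $a\in\{1,\omega,\omega^2\}$. But near infinity $\Phi$ coincides with $\varphi_{c_2}^{-1}\circ\varphi_{c_1}$, which is tangent to the identity, so $a=1$; therefore $\Phi=\mathrm{id}$ and $c_1=c_2$. (Equivalently, the internal marking fixing the choice among $c'$, $\omega c'$, $\omega^2 c'$ discards the two nontrivial rotations.) This proves injectivity of $\chi_{c_0}$.
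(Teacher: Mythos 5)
Your proposal is correct and takes essentially the same route as the paper: the paper's entire proof consists of the reduction spelled out in Appendix~\ref{sec:app-streightening}, identifying $\chi_{c_0}$ with the restriction of the Inou--Kiwi straightening map $\chi_{\lambda_0}$ for the quartic family $g_c=f_c^2$ to the slice $\{(c',\bar{c'})\}$, after which injectivity is quoted directly as Theorem~B of \cite{MR2970463}. Your additional sketch of the hybrid-equivalence, global surgery, and affine-rigidity argument underlying that theorem is consistent with how it is proved in that reference, but the paper itself does not reproduce it.
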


\begin{thm}[Onto hyperbolicity]
 \label{thm:IH-onto hyp}
 The image of the straightening map $\chi_{c_0}(\cR(c_0))$ contains all 
 hyperbolic maps in $\cM(c_0)$.
\end{thm}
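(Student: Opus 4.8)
The plan is to realize every hyperbolic map in $\cM(c_0)$ as a straightening, first for the centers and then for whole hyperbolic components. Fix a hyperbolic parameter $c' \in \cM(c_0)$ and let $P'$ be the corresponding quadratic polynomial ($Q_{c'}$ if $n$ is even, $f_{c'}$ if $n$ is odd). It suffices to treat the case where $c'$ is the center of its hyperbolic component: once each center is shown to lie in $\chi_{c_0}(\cR(c_0))$, the full component follows because the $c_0$-renormalization depends analytically on the parameter and $\chi_{c_0}$ restricts to a homeomorphism between the hyperbolic component of $\cR(c_0)$ carrying the realizing parameter and the hyperbolic component of $\cM(c_0)$ containing $c'$. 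This is the standard quasiconformal-surgery description of straightening on hyperbolic components, carried out in \cite{MR2970463}.

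So assume $c'$ is a center and let $\lambda' = \lambda(P')$ be its hyperbolic rational lamination. The heart of the argument is a combinatorial tuning: I would build a $(-2)$-invariant hyperbolic rational lamination $\lambda \supset \lambda_0$ by inserting $\lambda'$ into the renormalization slot of $\lambda_0 = \lambda(f_{c_0})$. Concretely, $\lambda_0$ carries a period-$n$ cycle of infinite unlinked classes (a Fatou critical element and its orbit) containing the critical orbit, and the first-return dynamics to the critical fiber is exactly the $c_0$-renormalization, of topological degree $-2$ when $n$ is odd and $+2$ when $n$ is even. One replaces the trivial combinatorics in this slot by a rescaled copy of $\lambda'$, spreads it over the cycle by pulling back along $\lambda_0$, and leaves $\lambda_0$ untouched away from the critical cycle. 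One then checks that the resulting $\lambda$ is again a rational lamination, i.e.\ that closedness, finiteness of classes, forward invariance, the consecutive-preserving property, and pairwise unlinking all survive the insertion, each being inherited from the corresponding property of $\lambda_0$ and $\lambda'$. This construction and its verification are precisely the combinatorial content supplied by \cite{MR2970463}.

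By construction every critical element of $\lambda$ is a Fatou critical element, so $\lambda$ is hyperbolic, and Lemma~\ref{lem:Kiwi-realization} applied with $d=-2$ yields a unique hyperbolic postcritically finite anti-holomorphic degree-two polynomial $f$ with $\lambda(f)=\lambda$. Up to affine normalization $f = f_c$ for a center $c$, the $\omega$-ambiguity of the anti-holomorphic normal form being resolved by the fixed internal marking. Since $\lambda(f_c)=\lambda \supset \lambda_0$ we get $c \in \cC(c_0)$, and the persistence of the period-$n$ critical fiber provides a quadratic-like restriction $f_c^n : U' \to U$ with the prescribed filled Julia set, so $c \in \cR(c_0)$. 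Finally the combinatorics of the $c_0$-renormalization of $f_c$ is the inserted copy $\lambda'$, and since $c'$ is postcritically finite this forces $\chi_{c_0}(c) = c'$.

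The main obstacle is the tuning step: one must verify that inserting $\lambda'$ respects $(-2)$-invariance globally, in particular the consecutive-preserving and unlinking axioms across the whole cycle of fibers, and that the realized map lands in the correct anti-holomorphic slice so that $\chi_{c_0}(c)$ equals $c'$ rather than $\omega c'$ or $\omega^2 c'$. The odd-period bookkeeping, namely the realness of multipliers and the role of the internal marking, is where care is needed; everything else is inherited from \cite{MR2970463}.
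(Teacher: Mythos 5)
Your proposal is correct in outline, but it takes a genuinely different route from the paper. The paper proves nothing intrinsically anti-holomorphic at this point: it passes to the second iterate $g_c=f_c^2$, embeds the tricorn family into the monic centered quartic family, identifies $\cC(c_0)=\cC(\lambda_0)\cap\C$ and $\cR(c_0)=\cR(\lambda_0)\cap\C$, writes $\chi_{c_0}$ as the restriction of the quartic/biquadratic straightening map $\chi_{\lambda_0}$ to the anti-diagonal slice $\{(c',\bar{c'})\}$, and then quotes Theorem~C of \cite{MR2970463} wholesale (Appendix~\ref{sec:app-streightening}). You instead unfold the content of that theorem directly in the degree $-2$ category: tuning of $\lambda_0$ by $\lambda(P')$, realization of the tuned hyperbolic lamination via Lemma~\ref{lem:Kiwi-realization} with $d=-2$, renormalizability of the realized center, and quasiconformal surgery to pass from centers to whole components. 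Your route buys something real: since Lemma~\ref{lem:Kiwi-realization} with $d=-2$ produces an anti-holomorphic quadratic $f_c$ by construction, you never confront the point the paper's reduction silently needs, namely that the preimage under $\chi_{\lambda_0}$ of a hyperbolic point on the anti-diagonal, which Theorem~C locates only somewhere in $\cR(\lambda_0)$, actually lies in the tricorn slice; making that ``direct consequence'' honest requires combining injectivity (Theorem~\ref{thm:IH-inj}) with an equivariance argument, or in effect redoing your construction. The costs sit where you flagged them: the two steps you cite back to \cite{MR2970463} --- verification of the tuning axioms and the component-level extension --- are proved there for holomorphic families over mapping schemas, so both need routine but genuine adaptation to $m_{-2}$-equivariant laminations and anti-holomorphic renormalizations. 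In particular, your phrase ``$\chi_{c_0}$ restricts to a homeomorphism between hyperbolic components'' is circular if taken as a justification, since surjectivity onto the target component is part of what is being proved, and for odd-period components of $\cM^*$ it cannot be rescued by a multiplier-map argument because odd-period multipliers are real; the correct formulation is that every non-center hyperbolic parameter is obtained from the parameter realizing the center by an invariant-Beltrami-form deformation supported on the attracting basins, which is exactly the surgery behind \cite[Lemma~9.2]{MR2970463} that the paper itself invokes in Corollary~\ref{cor:real surjectivity}.
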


\begin{thm}[Compactness]
 \label{thm:IH-cpt}
 If $c_0$ is primitive,
 then we have $\cC(c_0) = \cR(c_0)$ and it is compact.
\end{thm}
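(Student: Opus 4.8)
The plan is to transfer the corresponding statements about straightening maps in the biquadratic family proved in \cite{MR2970463} to the tricorn, exploiting the fact that $f_c^2(z)=(z^2+\bar c)^2+c$ is a holomorphic polynomial of degree four, so that the rational lamination $\lambda(f_c)$, the critical fibers, and the notion of $c_0$-renormalization are all special cases of the biquadratic objects studied there. The inclusion $\cR(c_0)\subset\cC(c_0)$ is immediate from the definitions, so there are really two things to establish: that $\cC(c_0)$ is compact, and that, when $c_0$ is primitive, every combinatorially renormalizable parameter is genuinely renormalizable, i.e.\ $\cC(c_0)\subset\cR(c_0)$.

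For compactness, boundedness is free, since $\cC(c_0)\subset\cM^*\subset\bar{\D}(2)$. The content is closedness, namely that the condition $\lambda_0\subset\lambda(f_c)$ persists under limits. The lamination $\lambda_0=\lambda(f_{c_0})$ is hyperbolic and its non-trivial classes consist of finitely many rational angles, so $\lambda_0\subset\lambda(f_c)$ asks precisely that, for each $\lambda_0$-class, the corresponding external rays of $f_c$ co-land. For a fixed finite family of rational angles this co-landing relation is stable: if $c_k\to c$ with $\lambda_0\subset\lambda(f_{c_k})$, the common landing points of each $\lambda_0$-class accumulate on the landing point of the corresponding rays for $f_c$ (the rational rays land for every $c\in\cM^*$ because $K_c$ is connected), forcing the same identifications at $c$. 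This is exactly the assertion that the combinatorial renormalization locus is closed in \cite{MR2970463}, which I would invoke directly in its biquadratic form.

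For the equality $\cC(c_0)=\cR(c_0)$ I must, given $c\in\cC(c_0)$, produce an honest quadratic-like restriction $f_c^n:U'\to U$ whose filled Julia set is the critical fiber $K$. The external rays landing according to $\lambda_0$, together with equipotentials, cut $K_c$ into puzzle pieces, and primitivity is used to guarantee separation: since the closures of the Fatou components of $f_{c_0}$ are mutually disjoint, the puzzle piece surrounding $K$ and its image under $f_c^n$ can be chosen with $U'\Subset U$, the compact containment required of a polynomial-like map. In the satellite (non-primitive) case the relevant components share boundary points and this thickening degenerates, which is precisely why primitivity is assumed. Here I would quote the corresponding construction and the primitive-case coincidence theorem of \cite{MR2970463}.

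The main obstacle is this last step: one needs the separation of puzzle pieces, hence the bound $U'\Subset U$, to hold uniformly for \emph{all} $c\in\cC(c_0)$, including the non-hyperbolic parameters on $\partial\cC(c_0)$, and not to degenerate as $c$ approaches the boundary. The disjointness of Fatou-component closures is only given at the center $c_0$; propagating it to a uniform modulus bound across the whole combinatorial locus is the crux, and is exactly what the machinery of \cite{MR2970463} supplies. Once that uniform bound is in hand, $\cC(c_0)=\cR(c_0)$ follows, and compactness of $\cR(c_0)$ is then the same as the compactness of $\cC(c_0)$ already established.
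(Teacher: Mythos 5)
Your proposal follows essentially the same route as the paper: the paper proves this theorem purely by transfer, observing (Appendix~\ref{sec:app-streightening}) that under the embedding $c \mapsto g_c=f_c^2$ into monic centered quartic polynomials one has $\cC(c_0)=\cC(\lambda_0)\cap\C$ and $\cR(c_0)=\cR(\lambda_0)\cap\C$, so that the statement is a direct consequence of Theorem~D of \cite{MR2970463}. The two substantive steps you sketch (closedness of the finitely-many-angles co-landing conditions, and primitivity yielding honest quadratic-like restrictions with uniform thickening across all of $\cC(c_0)$) are precisely the content of that cited theorem, and you correctly defer both to it, just as the paper does.
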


\begin{thm}[Surjectivity]
 \label{thm:IH-onto}
 If $c_0$ is primitive and the renormalization period is even,
 then the corresponding straightening map $\chi_{c_0}:\cR(c_0) \to \cM$ 
 is a homeomorphism.
\end{thm}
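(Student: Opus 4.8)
The plan is to reduce the statement to two assertions about $\chi_{c_0}$—continuity and surjectivity—and to dispatch everything else by soft topology. Theorem~\ref{thm:IH-cpt} tells us that the primitivity of $c_0$ forces $\cR(c_0)=\cC(c_0)$ and that this set is compact, while Theorem~\ref{thm:IH-inj} already gives injectivity. Since $\cM$ is Hausdorff, a continuous injection from the compact space $\cR(c_0)$ is automatically a homeomorphism onto its image; hence once I know that $\chi_{c_0}$ is continuous and that $\chi_{c_0}(\cR(c_0))=\cM$, the theorem follows. So the entire content lies in establishing continuity and surjectivity.

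For continuity I would exploit that the renormalization period $n$ is \emph{even}, so each $c_0$-renormalization $f_c^n\colon U_c'\to U_c$ is \emph{holomorphic}, i.e.\ a genuine quadratic-like map of degree two. Although $c\mapsto f_c^2$ depends on $c$ only real-analytically (through $c$ and $\bar c$), one may regard the family as the restriction, to the anti-holomorphic slice $\{w=\bar c\}$, of the holomorphic biquadratic family $(z^2+w)^2+c$, to which the Inou--Kiwi straightening theory of \cite{MR2970463} applies directly. Over $\cR(c_0)$ this exhibits $\{f_c^n\colon U_c'\to U_c\}$ as an analytic family of quadratic-like mappings in the sense of Douady--Hubbard, so the continuity of straightening for such families \cite{MR816367}—via continuous dependence of the hybrid conjugacy's Beltrami coefficient together with the measurable Riemann mapping theorem with parameters—yields continuity of $\chi_{c_0}$. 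I would emphasize that the even-period hypothesis is precisely what keeps us inside the \emph{holomorphic} Mandelbrot-like framework; it is the odd (anti-holomorphic) case that destroys continuity and occupies the rest of the paper.

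For surjectivity, continuity and compactness first show that $X:=\chi_{c_0}(\cR(c_0))$ is compact, hence closed in $\cM$. By Theorem~\ref{thm:IH-onto hyp}, $X$ contains every hyperbolic parameter; being closed it therefore contains the closure of the set of centers of hyperbolic components, which is all of $\partial\cM$, together with the open interior of every hyperbolic component. The delicate point—and the step I expect to be the main obstacle—is to capture any interior parameter of $\cM$ that is \emph{not} hyperbolic, since closedness and onto-hyperbolicity together would force $X=\cM$ only under density of hyperbolicity, which is not available. To avoid invoking that conjecture I would run the Douady--Hubbard degree argument in the complexified biquadratic parameter: extend $\chi_{c_0}$ to a continuous map on a parameter region $\Lambda\supset\cR(c_0)$ whose boundary is mapped around $\partial\cM$ with winding number one, and conclude by the argument principle that the image covers the full set bounded by that curve, namely $\cM$. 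Checking the degree-one boundary behavior of the family of critical values—equivalently, verifying that the renormalizations form a Mandelbrot-like family with the correct winding—is the crux of the matter; granting it, $X=\cM$, and combined with the homeomorphism-onto-image established above this proves that $\chi_{c_0}\colon\cR(c_0)\to\cM$ is a homeomorphism.
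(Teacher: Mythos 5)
Your topological reduction (compactness from Theorem~\ref{thm:IH-cpt}, injectivity from Theorem~\ref{thm:IH-inj}, so that continuity plus surjectivity would finish the proof) is fine, but both steps you then identify as the real content have genuine gaps, and the first one rests on a false premise. You claim that over $\cR(c_0)$ the renormalizations $f_c^n\colon U_c'\to U_c$ form an analytic family of quadratic-like maps in the sense of Douady--Hubbard, because they are restrictions of the holomorphic biquadratic family $(z^2+w)^2+c$ to the slice $\{w=\bar c\}$. That slice is a \emph{totally real} submanifold of $\C^2$, not a complex one: evenness of $n$ makes each renormalization holomorphic in $z$, but the dependence on the parameter still goes through both $c$ and $\bar c$, so the family is only real-analytic in $c$. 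Douady--Hubbard's continuity of straightening \cite{MR816367} is a theorem about families holomorphic in one complex parameter (Mandelbrot-like families); no such theorem exists for real-analytic families, and this is not a technicality --- continuity of straightening genuinely fails outside the one-dimensional holomorphic setting, in two-complex-dimensional holomorphic families \cite{inou:2009aa} and, as this very paper shows, on the odd-period slices of the tricorn. Accordingly, \cite{MR2970463} does not prove continuity of straightening maps by any parametrized hybrid-conjugacy argument either; the paper's own proof of the present theorem is simply that it is a direct consequence of Theorem~F of \cite{MR2970463} via the embedding $c\mapsto f_c^2$ into the quartic family described in Appendix~\ref{sec:app-streightening}, where the homeomorphism property emerges from bijectivity and compactness (Theorem~\ref{thm:IH-cpt}) rather than from a continuity theorem applied to the family of renormalizations. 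So continuity of $\chi_{c_0}$ is part of what must be proved; it cannot be imported from \cite{MR816367}.

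The surjectivity step has the same defect one level up, and you do not carry it out. You rightly note that closedness of the image together with Theorem~\ref{thm:IH-onto hyp} is insufficient, since density of hyperbolicity in $\cM$ is unknown; but the substitute you propose --- extending $\chi_{c_0}$ to a neighbourhood of $\cR(c_0)$ and verifying degree-one winding of the boundary --- is precisely the Mandelbrot-like-family mechanism, which again presupposes holomorphic (and proper) dependence on a complex parameter that this family lacks; you yourself flag this verification as ``the crux of the matter'' and then grant it, so even on its own terms the argument is incomplete. The argument that actually yields surjectivity onto all of $\cM$, including non-hyperbolic interior parameters, is quasiconformal rather than degree-theoretic: a candidate parameter is produced by compactness, its straightening is a priori only quasiconformally conjugate to the target $Q_{c'}$ by a conjugacy conformal off the filled Julia set, and the surgery--rigidity lemma \cite[Lemma~9.2]{MR2970463} is what upgrades such a quasiconformal relative to an exact preimage; the proof of Corollary~\ref{cor:real surjectivity} in this paper runs exactly this scheme in the real slice. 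Without that lemma (or a genuinely justified degree argument), your proof cannot reach the possibly non-hyperbolic parameters of $\Int\cM$, and the theorem is not established.
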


Theorem~\ref{thm:IH-onto} shows that there are baby Mandelbrot sets in
the tricorn. For example, there are infinitely many primitive centers of
even period in $\cM^* \cap \R= \cM \cap \R$.
Surjectivity of straightening maps for (primitive) tricorn-like sets
is still open.

How these theorems are obtained from more general results in
\cite{MR2970463} is explained in Appendix~\ref{sec:app-streightening}.

\begin{rem}
 Except for the case of Theorem~\ref{thm:IH-onto}, 
 we do not know if $\cC(c_0)$ or $\cR(c_0)$ are connected, even if we
 assume that $c_0$ is primitive.

 As in the case of the Mandelbrot set,
 it is natural to consider $\cC(c_0)$ as a \emph{fiber}
 by cutting parameter rays landing at the same point.
 Since each fiber is connected, this argument should prove $\cC(c_0)$ is
 connected.  
 However, one should be aware that the parameter rays accumulating to
 hyperbolic components of odd period $p>1$ does not land at a point
 \cite{inou:2014aa}.
 For example, the parameter rays of angle $3/7$ and $4/7$ for the
 Mandelbrot set land at the root of the airplane component and they divide
 the parameter plane into two.
 On the other hand, those rays for $\cM^*$ do not land
 and that their accumulation sets are disjoint (see
 Figure~\ref{fig:parameter ray}). 
 But they, together with the airplane component, still divide the plane
 into two components. 
 \begin{figure}[htb]
  \centering
  \fbox{\includegraphics[width=5cm]{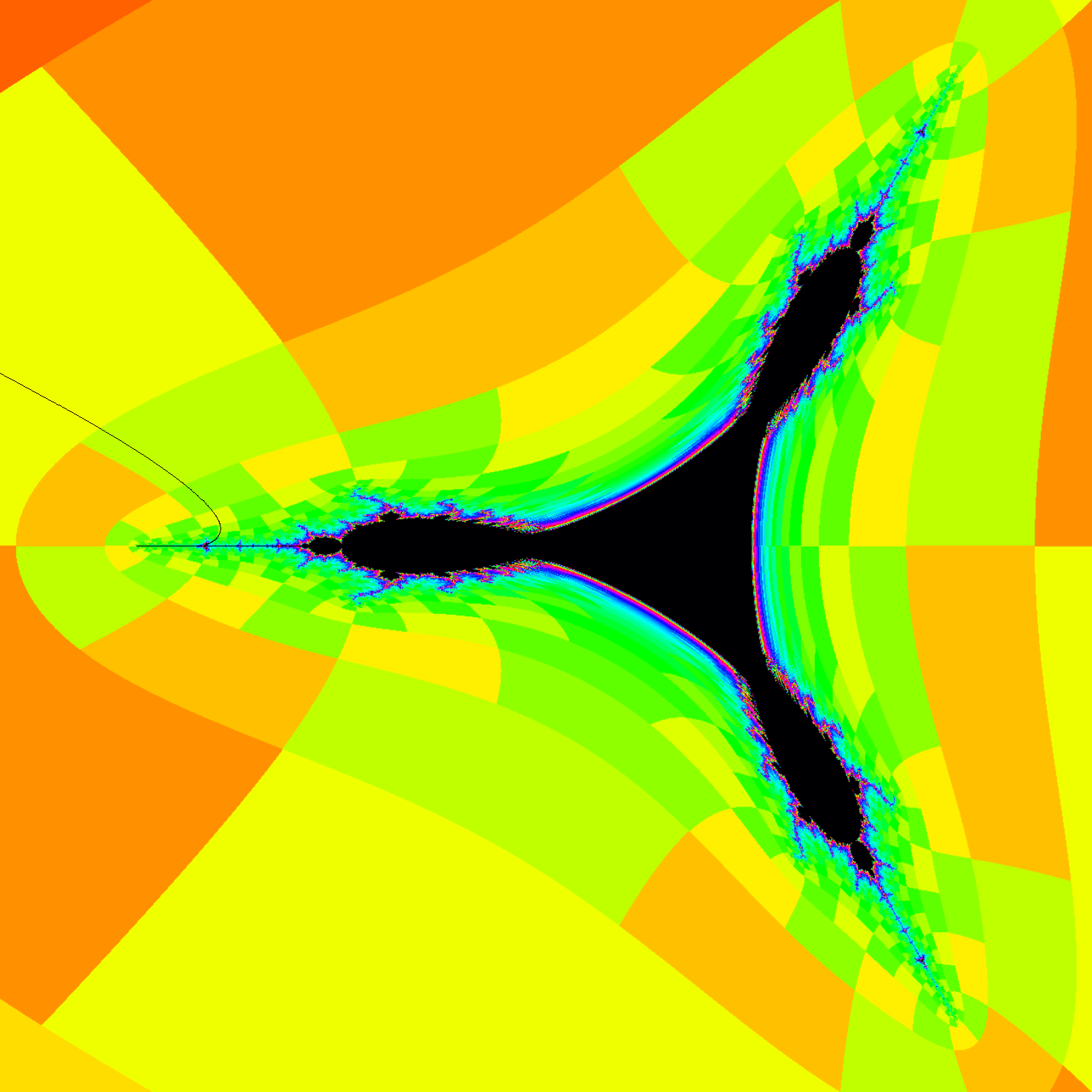}}\quad
  \fbox{\includegraphics[width=5cm]{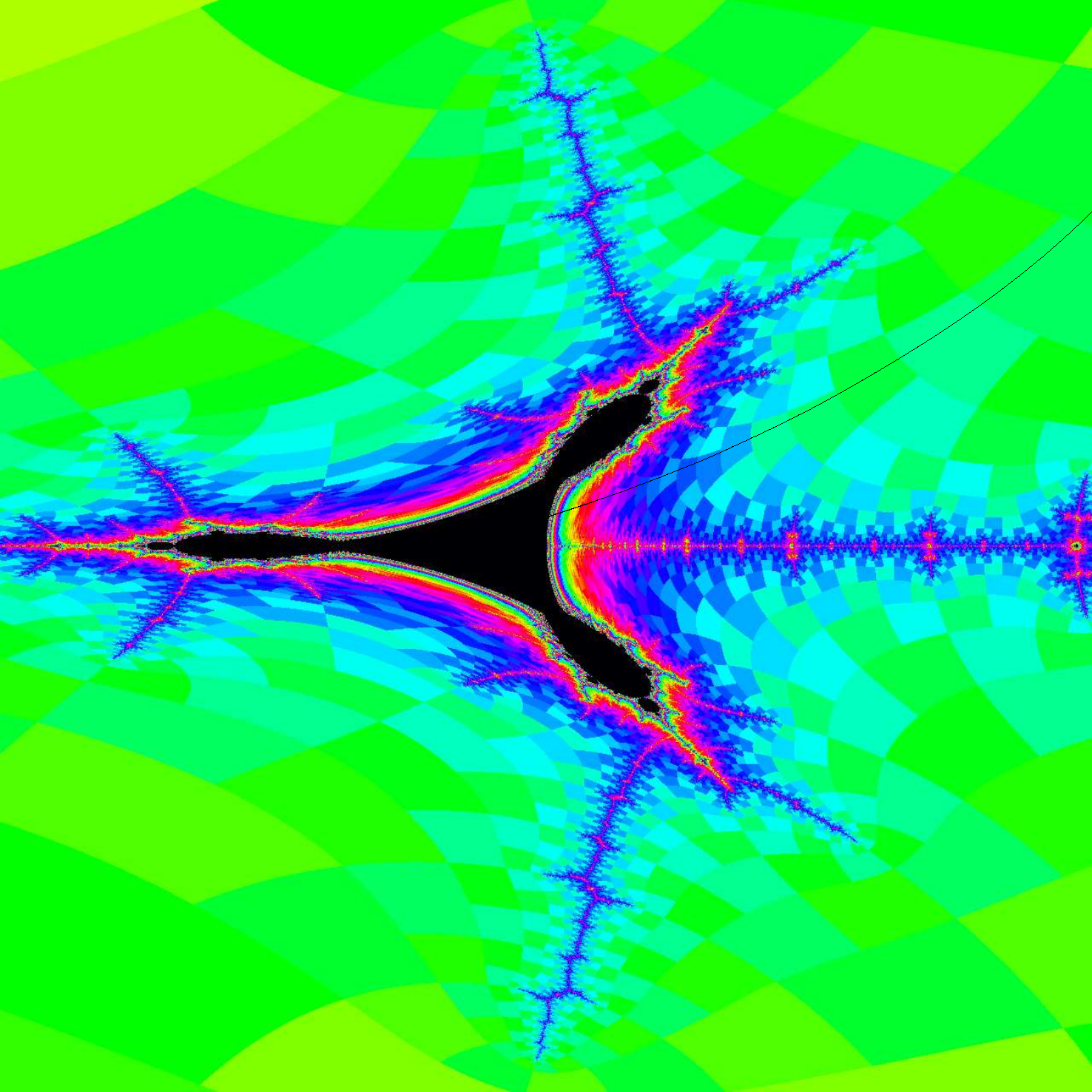}}\\
  \fbox{\includegraphics[width=5cm]{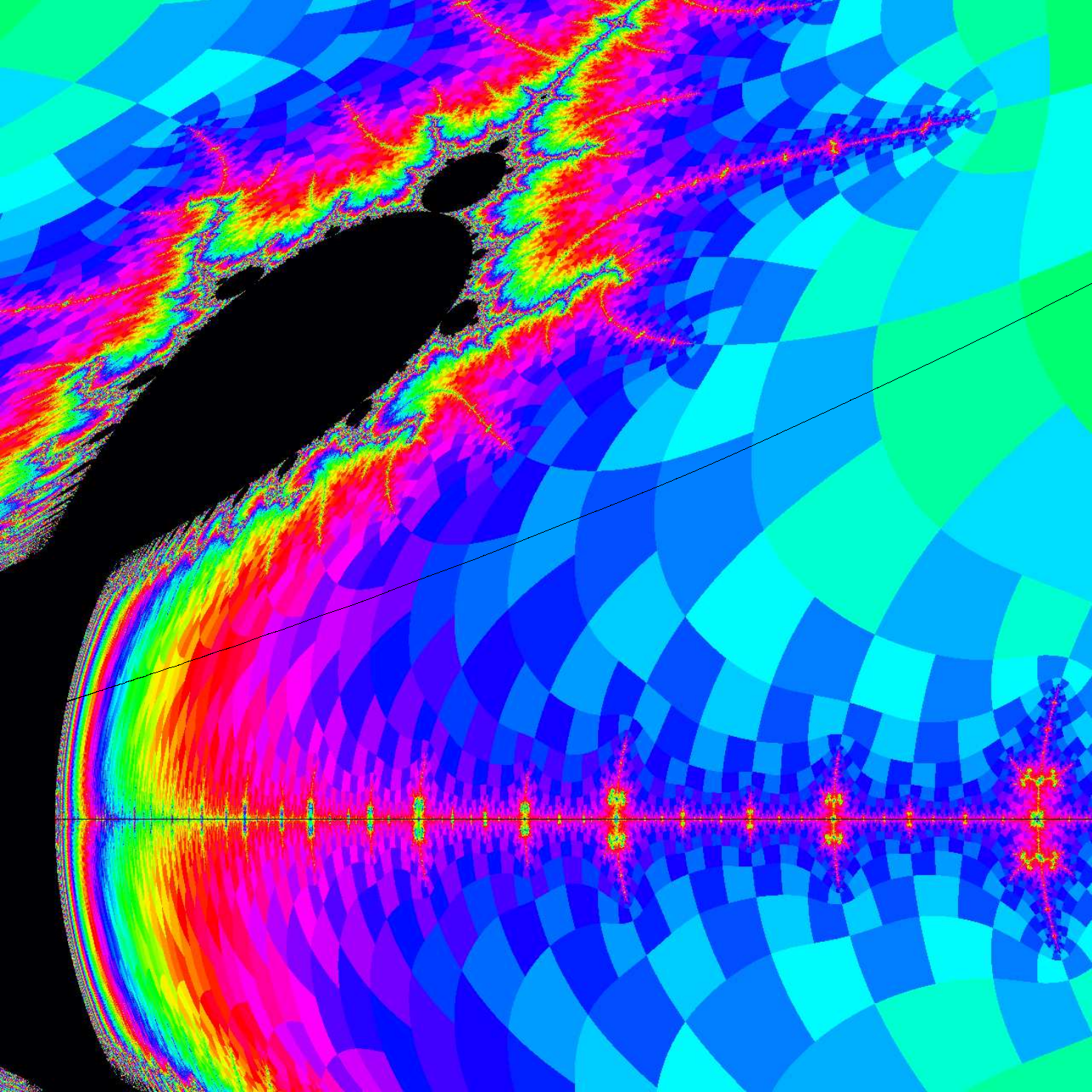}}\quad
  \fbox{\includegraphics[width=5cm]{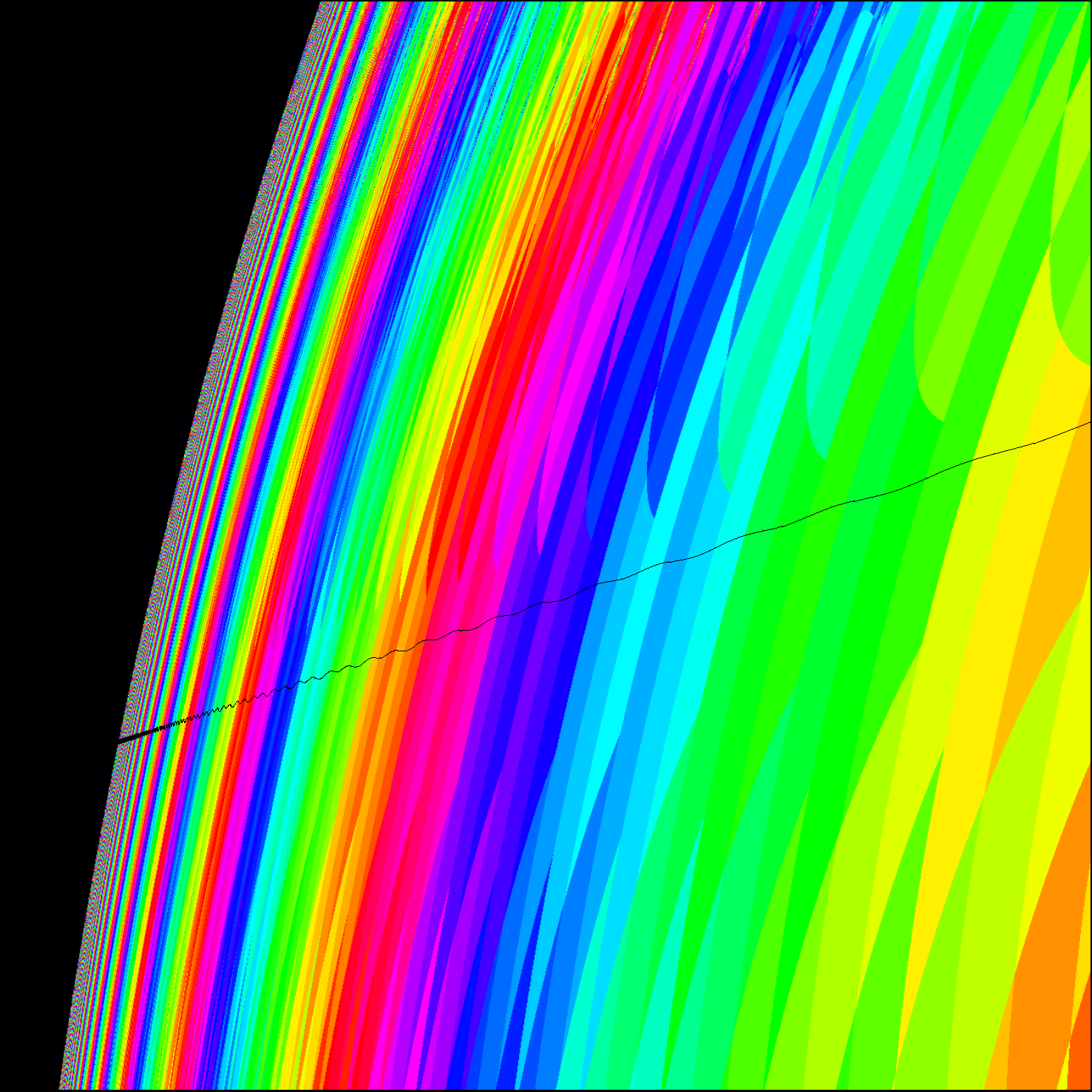}}\\
  \fbox{\includegraphics[width=5cm]{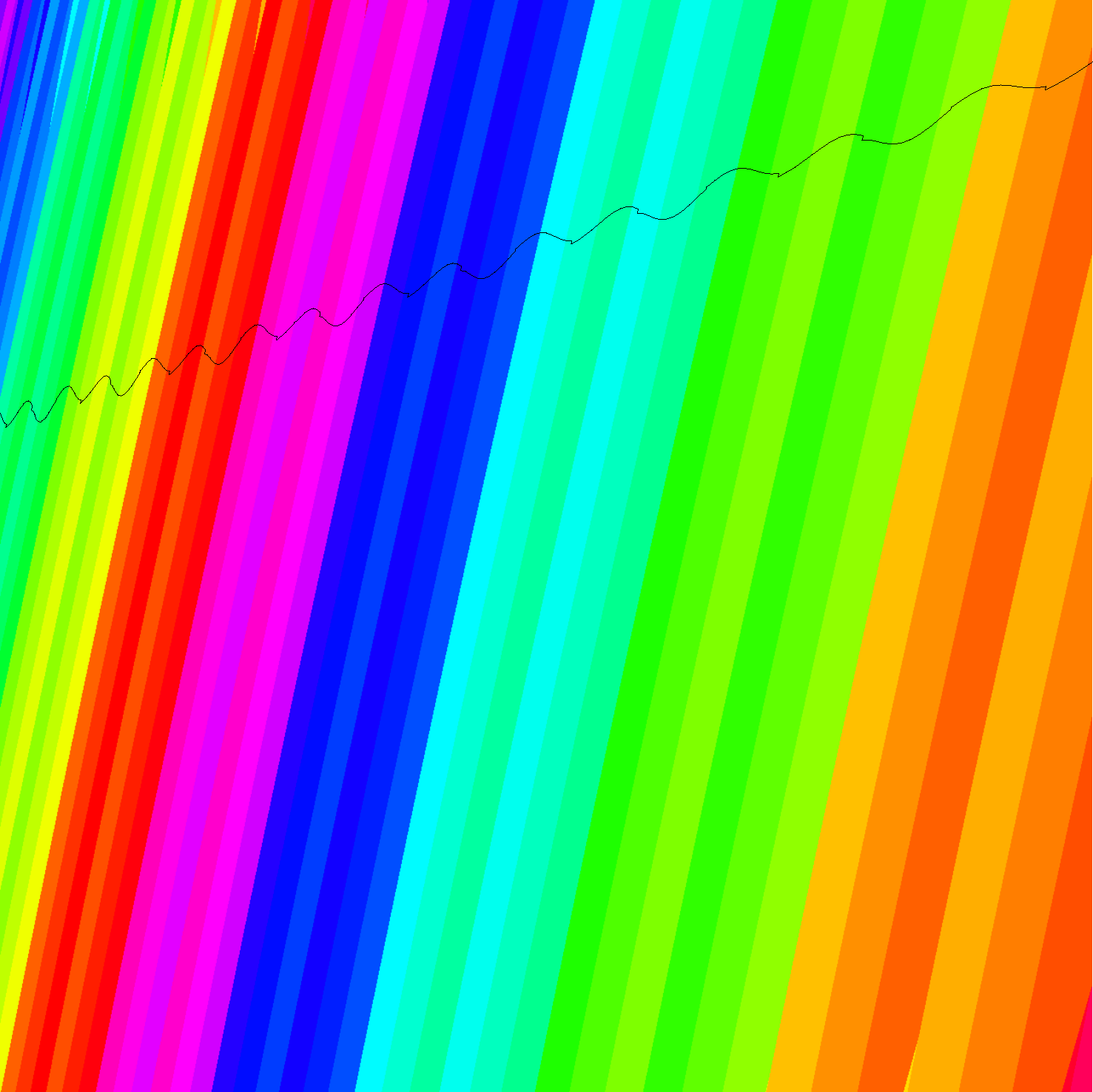}}
  \caption{The parameter ray of angle 3/7.}
  \label{fig:parameter ray}
 \end{figure}
\end{rem}

\section{Baby tricorn-like sets}
\label{sec:baby-tricorn-like}

\begin{lem}
 \label{lem:odd-primitive}
 If the renormalization period is odd, then $c_0$ is primitive.
\end{lem}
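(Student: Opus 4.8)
The plan is to argue by contradiction: suppose the renormalization period $n$ is odd but $c_0$ is not primitive, so two distinct Fatou components of $f_{c_0}$ share a boundary point. Since $c_0$ is a center, $f_{c_0}$ is postcritically finite and hyperbolic, its Julia set is locally connected, every bounded Fatou component is a Jordan domain, and the points where two components meet are (pre)periodic. The critical orbit lies in the Fatou set, so the forward orbit of such a meeting point avoids the Julia set and the relevant iterate is a local homeomorphism; pushing the pair forward to the periodic point in its orbit, I obtain two \emph{distinct} components meeting at a periodic point $p$. Because $f_{c_0}^{m}$ (where $m$ is the period of $p$) permutes the finitely many components touching $p$, each of these is periodic, i.e.\ an immediate-basin component of the superattracting $n$-cycle. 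Write $U_0,\dots,U_{n-1}$ for the immediate-basin components with $f_{c_0}(U_k)=U_{k+1}$ (indices mod $n$). Since two distinct Jordan components meet in at most one point, the index-preserving map $f_{c_0}^{n}$ must fix $p$, whence $m\mid n$; as $n$ is odd, $m$ is odd.

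The local mechanism is the following, and it is where oddness enters. Because $m$ is odd, $f_{c_0}^{2m}$ is holomorphic near $p$, and by the lemma that a periodic point of odd period has non-negative real multiplier, together with repulsion, its multiplier $(f_{c_0}^{2m})'(p)$ is real and $>1$. In Koenigs coordinates $f_{c_0}^{2m}$ is a positive real dilation, which sends every sector based at $p$ into itself; distinct Jordan components occupy disjoint sectors at $p$, so each component touching $p$ is \emph{fixed} by $f_{c_0}^{2m}$. As these components have period $n$, this forces $n\mid 2m$; with $m\mid n$ and $n$ odd we get $m=n$. Now $f_{c_0}^{n}$ is anti-holomorphic, hence orientation-reversing, fixes $p$, and fixes each of the $q\ge 2$ components meeting there. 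Its induced action on the cyclically ordered sectors is therefore order-reversing, i.e.\ a reflection, and a reflection can fix every one of the $q$ sectors only when $q\le 2$. Hence exactly two components meet at $p$, and the same holds all along its orbit.

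It remains to derive a contradiction, which is where the global topology of $K_{c_0}$ is the crux. Let $G$ be the graph with vertices $U_0,\dots,U_{n-1}$ and an edge for each pair of components sharing a boundary point. The orbit of $p$ consists of $m=n$ distinct periodic points, each joining exactly two components, and distinct touching points give distinct edges because two components meet in at most one point; thus $G$ has $n$ vertices and $n$ distinct edges. On the other hand, for a unicritical polynomial the closures of the immediate-basin components meet in a tree-like pattern: $\bigcup_k\overline{U_k}$ carries no Jordan curve, equivalently the associated Hubbard tree is a tree, so $G$ is a forest, which on $n$ vertices has at most $n-1$ edges — a contradiction. Hence no two Fatou components touch and $c_0$ is primitive. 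The main obstacle is exactly this last, global input — rigorously excluding a loop among the immediate-basin components (that their adjacency graph is a forest and distinct components meet in at most one point), which I would cite from the theory of laminations and Hubbard trees in \cite{MR1810538}. Everything else is local and is driven by a single mechanism: odd period forces the return multiplier to be positive real, so the return map can only reflect, never rotate, the components clustered at a periodic cut point — the same absence of rotation that produces parabolic arcs rather than satellite root points in $\cM^*$ — leaving only loop-forming pairwise touchings, which the tree structure forbids.
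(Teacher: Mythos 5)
Your proof follows a genuinely different route from the paper's, which is only two lines long: there, one quotes the standard satellite fact that if two period-$n$ Fatou components touch, then the touching point is a repelling periodic point of period $k<n$, and plays it against \cite[Corollary~4.2]{MR2020986}, which forces $k=n$ for such a cut point when $n$ is odd. Your reduction to periodic components, the divisibility bookkeeping $m\mid n$, the reflection argument giving $q=2$, and the final count against the tree structure of $\bigcup_k\overline{U_k}$ (a forest on $n$ vertices cannot carry $n$ distinct edges) are sound in outline; and the forest property that you single out at the end as ``the main obstacle'' is in fact standard and unproblematic: a cycle of touching components would produce a Jordan curve inside $K_{c_0}$ whose interior contains Julia points, contradicting $J_{c_0}=\partial(\C\setminus K_{c_0})$.

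The genuine gap is the step you treat as immediate, not the one you flag. You assert that since $(f_{c_0}^{2m})'(p)$ is real and greater than $1$, ``in Koenigs coordinates $f_{c_0}^{2m}$ is a positive real dilation, which sends every sector based at $p$ into itself; distinct Jordan components occupy disjoint sectors at $p$, so each component touching $p$ is fixed by $f_{c_0}^{2m}$.'' A positive real multiplier does not by itself prevent the return map from permuting disjoint objects landing at $p$, because nothing yet guarantees that the components occupy honest sectors rather than spiraling accesses. Concretely, with $b=\pi/\log 2$ the logarithmic spiral $S=\{e^{(1+ib)t}:\ t\le 0\}$ satisfies, as germs at $0$, $2S=-S$ and $4S=S$: the positive real dilation $z\mapsto 2z$ exchanges the two disjoint simple arcs $S$ and $-S$ landing at $0$ (and exchanges the two spiral-strip domains between them), while $z\mapsto 4z$ preserves each. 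So the Koenigs argument, as stated, proves nothing; what it needs as input is that the finitely many components touching $p$ have non-spiraling accesses with a well-defined cyclic order. That local structure (coming from local connectivity of $J_{c_0}$ and the external-ray portrait at the repelling point $p$) is precisely the nontrivial content of results such as \cite[Corollary~4.2]{MR2020986}, which the paper cites rather than reproves. Note also that once the cyclic order is available, your multiplier lemma becomes superfluous: $f_{c_0}^m$ is antiholomorphic, hence reverses the cyclic order of the accesses at $p$, and every order-reversing bijection of a finite cyclically ordered set has the form $i\mapsto c-i$ and is an involution, so $f_{c_0}^{2m}$ automatically fixes each component touching $p$. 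In other words, the orientation-reversal mechanism you invoke only one step later is the correct (and sufficient) engine for this step as well; if you justify the cyclic-order/sector structure by citation, your argument closes up and remains a genuine alternative to the paper's proof.
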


\begin{proof}
 If there are two distinct Fatou components $U_1$, $U_2$ for $f_{c_0}$
 such that $\overline{U_1} \cap \overline{U_2}$ is nonempty,
 then by taking the forward images, we may assume that $U_1$ and $U_2$
 are periodic. Then the intersection consists of a repelling periodic
 point $x$ of period $k$, which is less than the renormalization period
 $n$.
 
 Since $x$ is the ``root'' of $U_1$ (i.e., $x$ disconnects the filled
 Julia set) and the period $n>1$ is odd, we have $k=n$ by
 \cite[Corollary~4.2]{MR2020986}, hence we have a
 contradiction.
\end{proof}

By Theorem~\ref{thm:IH-cpt}, we have the following:
\begin{cor}
 \label{cor:cpt}
 If the renormalization period is odd, then
 $\cC(c_0)=\cR(c_0)$ and it is compact.
\end{cor}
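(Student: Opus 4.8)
The plan is to obtain this statement as an immediate consequence of the two results stated just above, namely Lemma~\ref{lem:odd-primitive} and Theorem~\ref{thm:IH-cpt}. First I would invoke Lemma~\ref{lem:odd-primitive}: under the hypothesis that the renormalization period is odd, that lemma guarantees that the center $c_0$ is primitive, i.e., the closures of the Fatou components for $f_{c_0}$ are mutually disjoint. This is precisely the hypothesis required by the Compactness theorem, so the role of the oddness assumption is purely to supply primitivity through a combinatorial (root-point) mechanism.

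Then I would apply Theorem~\ref{thm:IH-cpt} directly to the now-primitive center $c_0$. That theorem asserts that primitivity alone forces the equality $\cC(c_0)=\cR(c_0)$ together with compactness of this common set, which is exactly the conclusion sought. No additional estimates or constructions are needed: the entire content of the corollary is the observation that oddness of the period is a sufficient condition guaranteeing primitivity, after which the general machinery of \cite{MR2970463} encapsulated in Theorem~\ref{thm:IH-cpt} takes over.

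Since both inputs are already available, there is no genuine obstacle in the proof of the corollary itself. The only point worth flagging is logical bookkeeping: one must confirm that the ``renormalization period'' appearing in Lemma~\ref{lem:odd-primitive} is the same integer $n$ (the period for $c_0$) used to define $\cC(c_0)$ and $\cR(c_0)$, which holds by construction. Any real difficulty lives upstream, in establishing Lemma~\ref{lem:odd-primitive} via the analysis of root points in \cite[Corollary~4.2]{MR2020986} and Theorem~\ref{thm:IH-cpt} via the straightening results of \cite{MR2970463}; both of these I treat as given here, so the corollary follows in a single line.
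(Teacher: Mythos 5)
Your proposal is correct and is exactly the paper's argument: the paper derives Corollary~\ref{cor:cpt} by combining Lemma~\ref{lem:odd-primitive} (odd period implies primitivity) with Theorem~\ref{thm:IH-cpt} (primitivity implies $\cC(c_0)=\cR(c_0)$ and compactness), with no additional steps. Your remark on matching the renormalization period across the two statements is fine bookkeeping but, as you note, holds by construction.
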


By a similar argument as in \cite[\S 9]{MR2970463},
we have surjectivity onto the real connectedness locus.
\begin{cor}
 \label{cor:real surjectivity}
 If the renormalization period is odd, then
 the image $\chi_{c_0}(\cR(c_0))$ of the straightening map contains 
 the real connectedness locus $\cM^* \cap \R=[-2,1/4]$.
\end{cor}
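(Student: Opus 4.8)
The plan is to prove that every real parameter $c' \in [-2,1/4] = \cM^* \cap \R$ lies in the image of $\chi_{c_0}$, by approximating $c'$ from inside the part of the image we already control. By Theorem~\ref{thm:IH-onto hyp} the image $\chi_{c_0}(\cR(c_0))$ contains every hyperbolic parameter of $\cM(c_0) = \cM^*$; in particular it contains every real hyperbolic parameter. By density of hyperbolicity in the real quadratic family, these are dense in $[-2,1/4]$, so the image already meets $[-2,1/4]$ in a dense subset. It therefore suffices to prove that $\chi_{c_0}(\cR(c_0)) \cap \R$ is closed in $[-2,1/4]$.

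First I would set up the closedness argument using compactness. Fix $c' \in [-2,1/4]$ and choose real hyperbolic parameters $c'_k \to c'$ lying in the image. By Theorem~\ref{thm:IH-inj} the preimages $c_k = \chi_{c_0}^{-1}(c'_k) \in \cR(c_0)$ are well defined, and by Corollary~\ref{cor:cpt} the domain $\cR(c_0) = \cC(c_0)$ is compact, so after passing to a subsequence we may assume $c_k \to c_\infty \in \cR(c_0)$. The goal is then to identify $\chi_{c_0}(c_\infty) = c'$; once this is shown, $c'$ lies in the image and the proof is complete.

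The hard part, and the reason this does not follow formally, is that $\chi_{c_0}$ need not be continuous — indeed the non-continuity of such straightening maps is the main theme of this paper — so the equality $\chi_{c_0}(c_\infty) = c'$ must be established by hand. I would control the limit at the level of rational laminations. Writing $c'_\infty = \chi_{c_0}(c_\infty)$, the straightening descends to a combination operation on laminations, so $\lambda(f_{c'_\infty})$ is determined by $\lambda(f_{c_\infty})$; on the other hand the real quadratic laminations $\lambda(f_{c'_k})$ converge, their kneading invariants varying monotonically and converging to that of $c'$, and a one-sided semicontinuity of ray landing under $c_k \to c_\infty$ gives $\lambda(f_{c'_\infty}) \supseteq \lambda(f_{c'})$. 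The remaining task is to upgrade this containment to equality, hence to $c'_\infty = c'$: this is exactly where the pathology that produces discontinuity — the oscillation of umbilical cords at \emph{non-real} odd-period components — must be excluded, and it is excluded precisely because the target combinatorics is real, so that the relevant parabolic Hubbard trees are real-analytic and the corresponding umbilical cords land. Rigidity of real quadratic combinatorics then forces $c'_\infty = c'$.

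I expect this last upgrade — ruling out a strict jump of the lamination in the limit by exploiting the real structure of the target — to be the main obstacle, and it is the step that mirrors the argument of Kiwi and the author in \cite[\S 9]{MR2970463}. Everything else (denseness of the controlled part of the image, and extraction of a convergent subsequence) is soft and uses only Theorems~\ref{thm:IH-inj} and~\ref{thm:IH-onto hyp} together with Corollary~\ref{cor:cpt}.
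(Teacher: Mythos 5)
Your opening moves (density of real hyperbolic parameters via Graczyk--\'{S}wi\c{a}tek and Lyubich, Theorem~\ref{thm:IH-onto hyp} to put them in the image, Theorem~\ref{thm:IH-inj} and Corollary~\ref{cor:cpt} to extract a convergent subsequence $c_k \to c_\infty$ in $\cR(c_0)$) coincide with the paper's proof. The divergence, and the gap, is in the final step: you insist on proving $\chi_{c_0}(c_\infty)=c'$, i.e.\ that no jump occurs in the limit. This is both unnecessary and not achievable by the tools you invoke. What the limit actually gives (via the lemma in \cite[\S II.7]{MR816367}) is only that $f_{c'}$ and $f_{c''}$, with $c''=\chi_{c_0}(c_\infty)$, are \emph{quasiconformally} conjugate, conformally so outside the filled Julia sets. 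Upgrading this to $c''=c'$ is a rigidity statement at an arbitrary parameter of $[-2,1/4]$ --- for infinitely renormalizable real $c'$ this amounts to triviality of the qc class (absence of invariant line fields), which is far beyond the results cited in this paper. Your appeal to ``rigidity of real quadratic combinatorics'' does not fill this hole: the Graczyk--\'{S}wi\c{a}tek/Lyubich rigidity compares two \emph{real} parameters, whereas $c''$ is not known to be real a priori; and your lamination containment $\lambda(f_{c''}) \supseteq \lambda(f_{c'})$, even if established and upgraded to equality, still only identifies $c''$ with $c'$ modulo exactly this (unproven) combinatorial rigidity. The remark about real-analytic parabolic Hubbard trees and landing umbilical cords concerns the mechanism of discontinuity in the tricorn and has no bearing on this identification.

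The paper sidesteps all of this with a surgery argument: it does \emph{not} claim $\chi_{c_0}(c_\infty)=c'$. Instead, having obtained the qc conjugacy between $f_{c'}$ and $f_{c''}$ which is conformal outside the filled Julia sets, it invokes \cite[Lemma~9.2]{MR2970463}, which produces a possibly \emph{different} parameter $\tilde{c} \in \cR(c_0)$ with $\chi_{c_0}(\tilde{c})=c'$. In other words, the image of the straightening map is shown to be saturated under such qc deformations, so $c'$ lies in the image even if the limit point $c_\infty$ itself maps elsewhere. If you want to repair your proof, replace the identification step by this lemma; as written, the step ``the limit equals $c'$'' is the crux and is unsupported.
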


\begin{proof}
 First note that $f_c^2=Q_c^2$ for $c \in \R$, hence hyperbolic maps are
 dense on the real line by Graczyk-\'{S}wi\c{a}tek
 \cite{MR1469316} and Lyubich \cite{MR1459261}.

 By Theorem~\ref{thm:IH-onto hyp}, the image contains those
 real hyperbolic parameters.
 Therefore for $c' \in \R \cap \cM^*$,
 we can take a sequence $c_n \in \cR(c_0)$ satisfying $\chi_{c_0}(c_n) \in
 \R$ and $\lim_{n \to \infty} \chi_{c_0}(c_n) \to c'$.
 Since $\cR(c_0)$ is compact, we may assume that $c_n \to c$ for some
 $c \in \cR(c_0)$.
 Let $c''=\chi_{c_0}(c)$. Then $f_{c'}$ and $f_{c''}$ are
 quasiconformally conjugate \cite[\S II.7 Lemma]{MR816367}. 
 Since $f_{c'}$ and $f_{c''}$ are conformally conjugate outside the
 filled Julia sets,
 there exists some $\tilde{c} \in
 \cR(c_0)$ such that $\chi_{c_0}(\tilde{c})=c'$ by
 \cite[Lemma~9.2]{MR2970463}.
\end{proof}

By Theorem~\ref{thm:IH-inj}, Lemma~\ref{lem:num fixed pts} and
Lemma~\ref{lem:three per3 superattr}, the following hold:
\begin{lem}
 For $c \in \cR(c_0)$, a renormalization $f_c^n:U' \to U$ of $f_c$ has
 either 
 \begin{itemize}
  \item four fixed points when the renormalization has an attracting or
	parabolic fixed point and no period two cycle;
  \item two fixed points and one period two cycle otherwise.
 \end{itemize}
\end{lem}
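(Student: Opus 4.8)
The plan is to reduce everything to the anti-holomorphic quadratic polynomial obtained by straightening, and then to read off the answer from Lemma~\ref{lem:num fixed pts}. Since $\cC(c_0)$ is a baby tricorn-like set, the renormalization period $n$ is odd, so the quadratic-like map $g=f_c^n:U'\to U$ is anti-holomorphic. By the straightening theorem it is hybrid equivalent to a monic centered anti-holomorphic quadratic polynomial, which by the definition of the straightening map and by Theorem~\ref{thm:IH-inj} we may take to be $f_{c'}$ with $c'=\chi_{c_0}(c)\in\cM^*$; the three-fold ambiguity coming from Lemma~\ref{lem:three per3 superattr} (the choice of internal marking) only rotates $c'$ by a power of $\omega$ and hence is irrelevant for counting periodic points.

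First I would transfer the periodic data through the hybrid conjugacy. Let $\varphi$ be a hybrid equivalence between $g$ and $f_{c'}$, defined on a neighborhood of $K(g)$ and conjugating $g$ to $f_{c'}$. Any fixed point of $g$ has a trivially bounded orbit and therefore lies in $K(g)$, and the same holds for the two points of a period two cycle; likewise all fixed points and period two points of $f_{c'}$ lie in $K_{c'}$. Since $\varphi$ restricts to a dynamical bijection $K(g)\to K_{c'}$, it induces a bijection between the fixed points of $g$ and those of $f_{c'}$, and between their period two cycles. Thus $g$ and $f_{c'}$ have the same number of fixed points and the same number of period two cycles.

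It remains to match the two alternatives. Here I would use that hybrid equivalence preserves the multiplier of each periodic point: for an attracting cycle this is immediate, since the cycle lies in the interior of the filled Julia set, where $\bar\partial\varphi\equiv 0$ forces $\varphi$ to be conformal, and the parabolic case follows from the corresponding standard property of hybrid equivalences. Consequently $g$ has an attracting or parabolic fixed point exactly when $f_{c'}$ does. Applying Lemma~\ref{lem:num fixed pts} to $f_{c'}$ now yields the dichotomy verbatim: $f_{c'}$, and hence $g$, has four fixed points and no period two cycle precisely when it has an attracting or parabolic fixed point, i.e.\ when $c'$ lies in the closure of the main hyperbolic component, and two fixed points together with a unique period two cycle otherwise.

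The step I expect to be the main obstacle is this last one, namely the clean transfer of the attracting or parabolic nature of the fixed point through the conjugacy, since the parabolic point sits on the Julia set rather than in the interior of $K(g)$ and so is not covered by the easy conformality argument. A convenient independent check on the numerology is to note that $g^2$ is a holomorphic polynomial-like map of degree four and hence has exactly four fixed points counted with multiplicity; these are distributed among the fixed points and the period two cycle of $g$ in exactly the two ways allowed above, which both confirms the count and shows that the two cases are mutually exclusive.
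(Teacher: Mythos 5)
Your argument is correct and matches the paper's approach: the paper states this lemma as an immediate consequence of the straightening construction together with Lemma~\ref{lem:num fixed pts}, i.e., precisely the transfer of fixed points, period-two cycles, and the attracting/parabolic alternative through the hybrid equivalence with $f_{c'}$, $c'=\chi_{c_0}(c)$, that you spell out. The parabolic step you flag as delicate is indeed standard and not a gap (for an anti-holomorphic map any indifferent fixed point has multiplier one, hence is parabolic, and possessing a basin of attraction with nonempty interior attached to the fixed point is invariant under the conjugacy), so your write-up is a faithful fleshing-out of the paper's one-line derivation.
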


\begin{lem}
 There are exactly three polynomials in $\cC(c_0)$ for which
 $c_0$-renormalization has period three critical point.
\end{lem}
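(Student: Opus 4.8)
The plan is to reduce the statement to the count in Lemma~\ref{lem:three per3 superattr}, using injectivity and onto-hyperbolicity of the straightening map. First I would record that the renormalization period $n$ is odd, so by Lemma~\ref{lem:odd-primitive} the center $c_0$ is primitive, and by Corollary~\ref{cor:cpt} we have $\cC(c_0) = \cR(c_0)$. Thus every $c \in \cC(c_0)$ is genuinely $c_0$-renormalizable, the straightening $\chi_{c_0}(c) = c' \in \cM^*$ is defined, and the $c_0$-renormalization $f_c^n : U' \to U$ is hybrid equivalent to $f_{c'}$.

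The key observation is that a hybrid equivalence conjugates the quadratic-like dynamics on a neighborhood of the filled Julia set and carries the unique critical point of $f_c^n$ to the critical point $0$ of $f_{c'}$. Since a conjugacy preserves the exact period of any periodic orbit, the critical point of the $c_0$-renormalization is periodic of exact period three if and only if $0$ is periodic of exact period three for $f_{c'}$. By Lemma~\ref{lem:three per3 superattr} this happens precisely when $c' \in \{\cAp, \omega\cAp, \omega^2\cAp\}$.

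It then remains to count the fibers of $\chi_{c_0}$ over these three parameters. Each of $\cAp$, $\omega\cAp$, $\omega^2\cAp$ is the center of a hyperbolic component, hence a post-critically finite hyperbolic map in $\cM^* = \cM(c_0)$; so Theorem~\ref{thm:IH-onto hyp} guarantees that each lies in the image $\chi_{c_0}(\cR(c_0))$, while Theorem~\ref{thm:IH-inj} guarantees that its preimage is a single point. Therefore the set of $c \in \cC(c_0)$ whose $c_0$-renormalization has a period-three critical point is exactly $\{\chi_{c_0}^{-1}(\cAp), \chi_{c_0}^{-1}(\omega\cAp), \chi_{c_0}^{-1}(\omega^2\cAp)\}$, a set of three points.

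I expect no serious analytic obstacle, since the statement is a formal consequence of the injectivity and onto-hyperbolicity theorems combined with Lemma~\ref{lem:three per3 superattr}; the two points needing care are both bookkeeping. The first is verifying that straightening preserves the \emph{exact} period of the critical orbit (immediate from the conjugacy, once one notes the critical point is the unique critical point of a quadratic-like map). The second is the internal marking: the straightening map was fixed together with one of the three admissible markings, so that $\omega\chi_{c_0}$ and $\omega^2\chi_{c_0}$ are the others; since the target set $\{\cAp, \omega\cAp, \omega^2\cAp\}$ is invariant under multiplication by $\omega$, the three preimages are independent of the chosen marking, and one should record that changing the marking merely permutes the three parameters.
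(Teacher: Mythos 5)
Your proposal is correct and follows exactly the route the paper intends: the paper gives no written proof, merely attributing the lemma to Theorem~\ref{thm:IH-inj} (injectivity) and Lemma~\ref{lem:three per3 superattr}, and your argument is precisely that derivation spelled out, with Corollary~\ref{cor:cpt} ensuring the straightening is defined on all of $\cC(c_0)$. Your explicit appeal to Theorem~\ref{thm:IH-onto hyp} for the ``at least three'' direction, and the remark that the answer is independent of the internal marking since $\{\cAp,\omega\cAp,\omega^2\cAp\}$ is $\omega$-invariant, fill in details the paper leaves implicit.
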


\section{Umbilical cords and path connectivity}
\label{sec:umbilical cords}

Hubbard-Schleicher \cite{hubbard_multicorns_2014} (see also Nakane-Schleicher
\cite{MR1463779}) showed that the tricorn is not path
connected.
Each hyperbolic component of odd period $p>1$ seems to have
an ``umbilical cord'', which connects it to the main hyperbolic component
(the one containing the origin).
They proved that such an umbilical cord, if exists, oscillates and does
not converge to a point, under an assumption called OPPPP
(Odd Period Prime Principal Parabolic). 

In this section, we briefly review their results and apply them to our
specific case.

\subsection{Ecalle cylinders, equator and Ecalle heights}

\begin{lem}
 Assume $f_c$ has a simple parabolic periodic point $z_0$ of odd period
 $p$ and let $V$ be the parabolic basin of $z_0$.
 Then there exists an open set $U \subset V$ with $z_0 \in \partial U$
 and a holomorphic map  $\varphi:U \cap V \to \C$ such that
 \begin{itemize}
  \item $\varphi(U \cap V)$ contains the half plane $\{\re w > 0\}$;
  \item $\varphi \circ f_c^k \circ \varphi^{-1}(w)= \bar{w}+\frac{1}{2}$.
 \end{itemize}
 The map $\varphi$ is unique up to post-composition by translation by
 real constant.
\end{lem}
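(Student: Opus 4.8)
The final statement is a standard parabolic-implosion / Fatou-coordinate lemma, but with an anti-holomorphic twist because the period $p$ is odd. Let me understand what we need to prove. We have $f_c$ with a simple parabolic periodic point $z_0$ of odd period $p$. Since $p$ is odd, $f_c^p$ is anti-holomorphic near $z_0$, and $f_c^{2p}$ is holomorphic with a parabolic fixed point. The multiplier of the odd-period cycle (defined as $(f_c^{2p})'(z_0)$) is real non-negative by the earlier lemma; "simple parabolic" presumably means this multiplier equals 1 and the parabolic point has a single attracting petal per cycle element (multiplicity 2 for $f_c^{2p}$). We want an Ecalle/Fatou coordinate $\varphi$ conjugating the first-return anti-holomorphic map $f_c^p$ (which I'll call $g = f_c^p$ restricted near $z_0$, with $k = p$) to the specific real-normal form $w \mapsto \bar{w} + \tfrac12$.

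**The plan.** I'll first reduce to the holomorphic setting. The map $g = f_c^p$ is anti-holomorphic, so write $g = \sigma \circ h$ where $\sigma(z) = \bar z$ is complex conjugation and $h$ is holomorphic; equivalently $g^2 = f_c^{2p}$ is holomorphic with a parabolic fixed point at $z_0$ of multiplier $1$. The classical Fatou-coordinate theorem (Leau–Fatou) applies to the holomorphic map $g^2$: there is an attracting petal $P$ and a holomorphic injection $\Phi : P \to \C$, unique up to additive constant, with $\Phi \circ g^2 \circ \Phi^{-1}(w) = w + 1$, and $\Phi(P)$ containing a right half-plane. (Here I normalize so the translation is by $+1$; a linear rescaling adjusts to whatever step is convenient.) The key structural fact is that because $g$ is an anti-holomorphic square root of $g^2$ fixing $z_0$, it must act on the Fatou coordinate as an anti-holomorphic map commuting with $w \mapsto w+1$, i.e. in the $\Phi$-coordinate $g$ becomes $w \mapsto \bar w + \beta$ for some constant $\beta$. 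Matching $(w \mapsto \bar w + \beta)$ iterated twice to $w \mapsto w+1$ forces $\bar\beta + \beta = 1$, so $\re \beta = \tfrac12$; the imaginary part of $\beta$ can then be killed by post-composing $\Phi$ with a real translation (the residual freedom in the Fatou coordinate), yielding exactly $w \mapsto \bar w + \tfrac12$.

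**Carrying out the steps.** Concretely I would (i) invoke Leau–Fatou to produce $\Phi$ and the attracting petal $P$, set $U$ to be (a $g$-forward-invariant sub-)petal with $z_0 \in \partial U$, and let $\varphi = \Phi$ after the normalization below; (ii) observe $g(P) \subset P$ and that $\psi := \Phi \circ g \circ \Phi^{-1}$ is defined on a right half-plane, is anti-holomorphic, and satisfies $\psi \circ \psi(w) = w+1$; (iii) show any anti-holomorphic $\psi$ with $\psi^{\circ 2}(w) = w+1$ on a half-plane has the form $\psi(w) = \bar w + \beta$ — here one argues that $\psi$ commutes with the translation $w\mapsto w+1$ (since it conjugates $g^2$ to itself), so $\psi(w) - \bar w$ is a $1$-periodic anti-holomorphic-type function bounded on the half-plane, hence constant by a normal-families/removable-singularity argument at the puncture corresponding to $z_0$; (iv) solve $\re\beta = \tfrac12$ from the functional equation and absorb $\im\beta$ by replacing $\Phi$ with $\Phi + t$, $t\in\R$. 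The uniqueness clause (up to real translation) follows from the classical uniqueness of $\Phi$ up to additive constant, restricted to those constants that preserve the normalized form $\bar w + \tfrac12$; an easy check shows only real translations do.

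**Main obstacle.** The genuinely delicate point is step (iii): pinning down that the anti-holomorphic first-return map is \emph{exactly} $\bar w + \beta$ rather than some more complicated anti-holomorphic involution-twisted translation. The clean way is to note that $\Phi$ linearizes $g^2$ to $w\mapsto w+1$, so $g$ descends to the quotient cylinder $\C/\Z$ as an anti-holomorphic automorphism whose square is the identity; the anti-holomorphic automorphisms of the bi-infinite cylinder are exactly $w \mapsto \bar w + \text{const}$ (together with orientation data), and the square condition selects $\re\beta = \tfrac12$. Making the "bounded $1$-periodic hence descends to the cylinder and is globally an affine conjugation" argument rigorous near the parabolic end — where $\Phi$ has its essential behavior as $z\to z_0$ — is where care is needed; one controls it using the standard asymptotics of Fatou coordinates on petals. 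Everything else is bookkeeping with the freedom in the Fatou coordinate and the real-positivity of the multiplier established earlier.
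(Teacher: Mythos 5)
The paper never proves this lemma itself: it simply cites Lemma~2.3 of Hubbard--Schleicher \cite{hubbard_multicorns_2014}, and the proof given there is exactly the route you propose (pass to the holomorphic map $f_c^{2p}$, take its classical attracting Fatou coordinate $\Phi$, observe that $f_c^p$ induces an anti-holomorphic self-map of the Ecalle cylinder, identify it with $w \mapsto \bar w+\beta$, and use $(f_c^p)^2=f_c^{2p}$ to force $\re\beta=1/2$). So your approach is the intended one. However, one step contains a genuine error: replacing $\Phi$ by $\Phi+t$ with $t\in\R$ does \emph{not} absorb $\im\beta$. Conjugating $w\mapsto \bar w+\beta$ by the translation by $t$ gives $w \mapsto \bar w -\bar t + t + \beta$, which equals $\bar w+\beta$ when $t$ is real --- a no-op. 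The constant that kills $\im\beta$ is purely imaginary: $t=is$ replaces $\beta$ by $\beta+2is$, so one takes $s=-\im\beta/2$. This is precisely \emph{why} the normalized coordinate is unique up to real translation, as you correctly state in your uniqueness paragraph; as written, your step (iv) contradicts that paragraph, and the fix is to swap ``real'' for ``purely imaginary'' there.

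A secondary point on your ``clean way'': the square condition alone does not pin down the form of the induced map, because the cylinder $\C/\Z\cong\C^*$ also carries end-preserving anti-holomorphic involutions, lifting to $w\mapsto -\bar w+c$ with $c\in\R$ (on $\C^*$ this is $z\mapsto\bar z$). What excludes these is the commutation relation $\psi(w+1)=\psi(w)+1$, which holds because $f_c^p$ commutes with $f_c^{2p}$ and $\Phi$ satisfies the Abel equation: lifts of type $-\bar w+c$ \emph{anti}-commute with the unit translation. The correct assembly is therefore: the involution property makes the induced map an automorphism of the full cylinder (note that the right half-plane already surjects onto $\C/\Z$, so the induced map is defined everywhere), hence its lifts are $\pm\bar w+c$; the commutation relation selects the $+$ sign; and then $\psi\circ\psi(w)=w+1$ exactly (not merely modulo $\Z$) gives $\re\beta=1/2$. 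This assembly also bypasses the boundedness issue you flag in the periodic-function variant, where constancy of $\psi(w)-\bar w$ would require control of $\psi$ at the two ends of the cylinder that you never establish.
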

See \cite[Lemma~2.3]{hubbard_multicorns_2014}.

The map $\varphi$ is called an \emph{attracting (incoming) Fatou
coordinate} at $z_0$.
It induces an isomorphism from the quotient $(V \cap
U)/(f_c^{2k})$ to the cylinder $\{\re w > 0\}/(w \mapsto w+1) = \C/\Z$.
The quotient space is called the \emph{attracting (incoming) Ecalle
cylinder} for $z_0$.
The set of fixed points by the action of $f_c$ on the Ecalle cylinder
forms a closed geodesic corresponding to $\R/\Z$.
We call it the \emph{equator}.
For $w \in V$, there exists some $k\ge 0$ such that $f_c^{2k}(w) \in U$.
The \emph{Ecalle height} of $w$ is defined by 
$\im \varphi(f_c^{2k}(w))$.
The Ecalle height of the critical point is called the \emph{critical
Ecalle height}.

Similarly, by replacing $f_c$ by $f_c^{-1}$, one can consider an 
\emph{repelling (outgoing) Fatou coordinate} and the \emph{repelling
Ecalle cylinder} for $z_0$. 

\subsection{Parabolic arcs}

\begin{thm}
 The boundary of a hyperbolic component of odd period 
 consists of three real-analytic curves, each of which is parametrized by
 the critical Ecalle height. 
\end{thm}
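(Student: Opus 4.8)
The plan is to understand the boundary of a hyperbolic component $H$ of odd period $p$ in terms of the dynamics of $f_c$ on the incoming Ecalle cylinder. Recall from Lemma~\ref{lem:num fixed pts} and Lemma~\ref{lem:three per3 superattr} (and its analogues for renormalizations) that the relevant bifurcation on $\partial H$ occurs when a periodic point of odd period $p$ becomes parabolic. Since odd-period multipliers are real and nonnegative, the only way to leave $H$ through its boundary is for the multiplier of the attracting cycle to reach $1$, at which point we get a \emph{simple} parabolic periodic point of odd period $p$. The first step is therefore to parametrize $\partial H$ by this parabolic data: for each $c \in \partial H$ we have a parabolic point $z_0$ of period $p$, and by the Fatou-coordinate Lemma above, $f_c^p$ acts on the incoming Ecalle cylinder $\C/\Z$ as $w \mapsto \bar w + \tfrac12$, i.e.\ as an anti-holomorphic involution up to a half-translation.

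First I would extract from the Fatou-coordinate normalization $\varphi \circ f_c^p \circ \varphi^{-1}(w) = \bar w + \tfrac12$ the key structural fact: because the map is \emph{anti}-holomorphic, the combination of the involution with the half-integer translation pins down the cylinder's geometry in a way that the critical point's position is measured by a single \emph{real} parameter, its critical Ecalle height $h = \im \varphi(f_c^{2pk}(0))$. The normalization uniqueness (up to real translation) means the imaginary part of the Fatou coordinate of the critical orbit is a well-defined real number, so $h$ is a genuine invariant. The plan is to show that the assignment $c \mapsto h(c)$ is a real-analytic bijection from each of the three boundary curves onto $\R$. That $h$ varies real-analytically in $c$ follows from real-analytic dependence of the parabolic point $z_0(c)$ and of the attracting Fatou coordinate on $c$, which holds as long as the parabolic point stays simple along the arc.

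Next I would count the boundary arcs and explain why there are exactly three. The count comes from the fact that along $\partial H$ the critical orbit can land on the equator at finitely many combinatorially distinct positions: the anti-holomorphic first-return map $\bar w + \tfrac12$ on the Ecalle cylinder has a fixed-point set (the equator) that the critical value must be positioned relative to, and the structure of odd-period parabolic bifurcation for $f_c$ produces three such arcs. More concretely, I would invoke the local analysis of parabolic bifurcations for anti-holomorphic maps from \cite{MR2020986} and \cite{hubbard_multicorns_2014}: a simple parabolic point of odd period $p$ for $f_c$ has, in suitable coordinates, a bifurcation picture governed by the square of the return map, and the boundary of $H$ decomposes into parabolic arcs separated by \emph{cusps} (double parabolic points, where the return map is no longer a simple parabolic). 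The number of cusps, and hence of arcs, is three; I would verify this by showing each arc is a maximal connected set on which $z_0$ remains simple parabolic, with the three arcs meeting at three cusp points, consistent with the $\omega$-symmetry of the family noted in Lemma~\ref{lem:three per3 superattr}.

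The main obstacle I expect is proving real-analyticity \emph{of the parametrization up to and including the endpoints}, and establishing surjectivity of $h$ onto all of $\R$ (properness of the parametrization as the critical Ecalle height tends to $\pm\infty$). Real-analyticity in the interior of each arc is a standard consequence of implicit-function-theorem arguments applied to the equation $\re\bigl((f_c^{2p})'(z_0) \big) = 1$ defining the parabolic locus together with the simplicity of $z_0$; the delicate point is controlling the Fatou coordinate, which is defined only by a limiting procedure, uniformly in $c$. For surjectivity I would argue that as $c$ traverses the arc the critical point is pushed arbitrarily high (resp.\ low) in the Ecalle cylinder, so $h(c)$ covers $\R$; the endpoints $h \to \pm\infty$ correspond precisely to the cusps, where two arcs meet and the parabolic point degenerates. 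This last identification — that the endpoints of the real-analytic arcs are exactly the cusp points and that three such arcs close up to bound $H$ — is where the careful parabolic-bifurcation bookkeeping from \cite{hubbard_multicorns_2014} and \cite{MR2020986} does the essential work.
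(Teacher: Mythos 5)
The paper itself gives no proof of this theorem: it is a quoted result, reviewed from Mukherjee--Nakane--Schleicher \cite{MR2020986} and Hubbard--Schleicher \cite{hubbard_multicorns_2014} (the section opens by saying it ``briefly reviews their results''), so your sketch has to be measured against the argument in those papers. Your global picture is the correct one --- the boundary of an odd-period component consists of parabolic parameters with multiplier $1$ because odd-period multipliers are real and nonnegative, there are three arcs meeting at three cusps, and each arc is parametrized by critical Ecalle height --- but the technical engine you propose does not work.

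The central gap is the implicit-function-theorem step. Write the return map near the cycle as $f_c^p(z)=h(\bar z)$ with $h$ holomorphic; a fixed point $z_0$ is a zero of $F(z)=h(\bar z)-z$, the real Jacobian determinant of $F$ at $z_0$ equals $1-|h'(\bar z_0)|^2$, and the multiplier of the cycle under $f_c^{2p}$ is $|h'(\bar z_0)|^2$. Hence the fixed point is a \emph{degenerate} zero of $F$ exactly when the cycle is indifferent, i.e., at every single point of the would-be parabolic arc. Consequently neither the parabolic point nor ``the multiplier of the cycle'' is an analytic, single-valued function of $c$ in a neighborhood of the arc, and there is no equation $\re\bigl((f_c^{2p})'(z_0)\bigr)=1$ to which the implicit function theorem can be applied: the nondegeneracy hypothesis fails identically along the very set you are trying to construct. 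This is not a technicality --- the fact that antiholomorphic parabolics occur along arcs rather than at isolated points (as in the Mandelbrot set) is precisely the phenomenon to be proved, and it is invisible to transversality arguments. The actual proof in \cite{MR2020986} and \cite{hubbard_multicorns_2014} is by quasiconformal surgery: one deforms the conformal structure on the incoming Ecalle cylinder by a shear that changes the critical Ecalle height, pulls the Beltrami form back by the dynamics, and applies the measurable Riemann mapping theorem with Ahlfors--Bers real-analytic dependence; real-analyticity of the arc, injectivity of the height parametrization, and its surjectivity onto $\R$ all come out of this single construction, whereas your proposal asserts injectivity and surjectivity (``the critical point is pushed arbitrarily high'') with no mechanism behind them. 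Finally, the count of exactly three arcs cannot be deduced from the $\omega$-rotational symmetry of $\cM^*$: that symmetry permutes distinct hyperbolic components and does not preserve a given odd-period component. The number three comes from the dynamics --- at the center, the first return map on the closure of the critical Fatou component is conjugate to $\bar z^2$ on $\bar{\D}$, which has exactly three fixed points on the unit circle --- and making that count rigorous is part of what \cite{MR2020986} does.
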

Those arcs are called \emph{parabolic arcs}.
A parabolic arc is called a \emph{root arc} if
the parabolic orbit of $f_c$ disconnects the Julia set for any parameter
$c$ on the arc, otherwise, it is called a \emph{co-root} arc.
Endpoints of parabolic arcs are called \emph{cusps}.
Each hyperbolic component of odd period $p \ge 3$ has one root arc and
two co-root arcs. 
By definition, if the period is one (i.e., for the main hyperbolic
component), all parabolic arcs are symmetric and co-root.

\subsection{Parabolic Hubbard tree}

\begin{defn}[Characteristic parabolic point]
 Let $f_c$ have a parabolic periodic point.
 The \emph{characteristic parabolic point} is the unique parabolic
 point on the boundary of the critical value Fatou component.
\end{defn}

\begin{defn}[Principal parabolic]
 We say $f_c$ (or simply $c$) is \emph{principal parabolic} if $f_c$
 has a simple parabolic orbit of period greater than one 
 and each point in the orbit has at least two periodic landing rays.
\end{defn}

If $f_c$ is principal parabolic and the period of the parabolic orbit is
odd, then $f_c$ is on a root-arc or its endpoints.

\begin{defn}[Parabolic Hubbard tree]
 Assume $f_c$ is principal parabolic.
 A \emph{parabolic Hubbard tree}, or a \emph{parabolic tree} for
 simplicity, of $f_c$ is a minimal tree contained in $K_c$ and
 connecting the parabolic orbit and the critical orbit, so that the
 intersection with the critical value Fatou component is a simple
 $f_c^p$-invariant curve connecting the critical value to the parabolic
 point on the boundary,
 and its intersection with any
 Fatou component is the backward image of the curve in the critical
 value component.
\end{defn}

A parabolic tree is not unique; it is unique in the Julia set,
but one can take any essential closed curve in the Ecalle cylinder
passing the critical value and then lift it and take forward and backward
images) to construct a parabolic Hubbard tree.
Hence any two parabolic Hubbard trees are homotopic rel $J_c$.
Sometimes it is called a \emph{loose} parabolic tree to emphasize
the non-uniqueness.

\begin{lem}
 \label{lem:arc converging parabolic}
 Let $f_c$ be principal parabolic with odd period $p>1$ and
 let $x$ be the characteristic parabolic point of period $p$ for $f_c$.
 Then 
 \begin{enumerate}
  \item $x$ is the landing point of exactly two rays of period $2p$.
  \item \label{item:endpoint} 
	Any parabolic tree of $f_c$ contains an arc such that 
	it converges to $x$ and does not intersect the critical value Fatou
	component.
 \end{enumerate}
\end{lem}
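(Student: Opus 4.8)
The plan is to establish the two assertions in turn: to read off (1) from the local parabolic structure at $x$ together with the anti-holomorphic first-return dynamics, and then to deduce (2) from (1) by a cut-point argument on the parabolic tree.

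For (1) I would begin with the standard facts that only finitely many external rays land at a parabolic periodic point, that they all have a common period under $f_c$, and that the first return permutes them cyclically with a well-defined combinatorial rotation number. The map fixing $x$ that carries a holomorphic, well-defined multiplier is $f_c^{2p}$; by the lemma that a periodic point of odd period has real non-negative multiplier, this multiplier is real and $\ge 0$, and since $x$ is a (simple) parabolic point it equals $1$. Hence the combinatorial rotation number is $0$, every ray landing at $x$ is individually fixed by $f_c^{2p}$, and so each has period $p$ or $2p$ under $f_c$. Now $f_c^p$ is anti-holomorphic and fixes $x$, so it reverses the cyclic order of the rays at $x$; moreover it preserves the critical value Fatou component $U_0$, on whose boundary $x$ lies, hence it maps the sector at $x$ containing the attracting petal inside $U_0$ to itself while reversing orientation, and therefore interchanges the two rays bounding that sector (at least two rays land at $x$ by the principal parabolic hypothesis, and two of them bound this sector). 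These two bounding rays are thus swapped by $f_c^p$, so they are not fixed and have exact period $2p$. As all rays landing at $x$ share a single period, none of them is fixed by $f_c^p$; consequently $f_c^p$ acts on the set of landing rays as a fixed-point-free, orientation-reversing involution, and their number is even and at least two.

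The remaining step, which I expect to be the technical heart, is to rule out four or more landing rays and thereby fix the count at two. Here I would exploit that $x$ is \emph{simple}: in the incoming Fatou coordinate the first return is the glide reflection $w \mapsto \bar{w}+\frac{1}{2}$, whose square is the single translation $w \mapsto w+1$, so $x$ carries one attracting/repelling petal pair and the repelling Ecalle cylinder is $\C/\Z$ with a unique $f_c^p$-invariant geodesic. Every ray landing at $x$ accumulates on the repelling side, and the two characteristic rays already realize the unique pair of separatrices issuing from the single repelling petal; a further landing ray would force an additional repelling sector at $x$, contradicting simplicity. This yields exactly two rays and completes (1). Making this passage from ``one petal pair'' to ``exactly two separatrices'' fully rigorous is the delicate point of the proof.

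For (2), recall that since $f_c$ is principal parabolic of odd period $p>1$ it lies on a root arc, so the parabolic orbit disconnects $J_c$; in particular the two rays of (1) together with $x$ separate $U_0$ from the remainder of $K_c$, so $x$ is a cut point isolating the critical value component. A parabolic tree $T$ is a dendrite connecting the entire parabolic orbit and the critical orbit, and its intersection with $U_0$ is exactly the single invariant curve from the critical value out to $x \in \partial U_0$. But $T$ also contains the other parabolic points $f_c(x),\dots,f_c^{p-1}(x)$, which lie outside $U_0$. Since $x$ separates $U_0$ from these points and $T$ is connected, the component of $T\setminus\{x\}$ containing $f_c(x)$ must be joined to $x$ by an arc of $T$ lying in $K_c\setminus U_0$ and converging to $x$. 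This arc is the one asserted in (\ref{item:endpoint}), and it avoids the critical value Fatou component by construction.
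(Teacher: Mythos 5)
Your proof of part (1) contains a genuine gap, and it is precisely at the step you yourself flag, but the problem is worse than ``delicate to make rigorous'': the proposed mechanism is not a valid one. The preliminary reductions are fine (multiplier of $f_c^{2p}$ at $x$ equals $1$, every landing ray is fixed by $f_c^{2p}$, the antiholomorphic return map $f_c^p$ acts on the landing rays as an orientation-reversing involution swapping the two rays that bound the sector containing the critical value component, hence all rays have period $2p$ and their number is even). But simplicity of $x$ controls only the number of attracting and repelling \emph{directions} (one of each); it does not bound the number of rays. Several $f_c^{2p}$-fixed rays could a priori land at $x$ all tangent to the single repelling direction, subdividing the one repelling petal into subsectors each containing Julia set; this creates no additional petal and no local contradiction with the normal form $w\mapsto \bar w+\tfrac12$. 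Indeed the same local model is compatible with one landing ray (the cauliflower $z^2+1/4$) and with two (the airplane root), so local data at $x$ cannot by itself pin down the count. The true obstruction is global: for holomorphic quadratics the statement ``valence two at a primitive parabolic point'' is Milnor's orbit-portrait theorem, whose proof tracks angular widths of sectors under iteration and uses that there is a single critical orbit. This is exactly why the paper does not argue at $x$ at all: it quotes part (1) directly from Mukherjee--Nakane--Schleicher \cite{MR2020986} (Corollary~4.2), where the corresponding global argument is carried out in the antiholomorphic setting. Completing your sketch would amount to reproving that result.

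For part (2) your route is genuinely different from the paper's. The paper argues intrinsically on the tree: all endpoints of a parabolic tree lie in the critical orbit, hence $f_c(0)$ is an endpoint (an endpoint $f_c^k(0)$, $0<k<p$, is pulled back using injectivity of $f_c^{k-1}$ near $f_c(0)$ and forward invariance), and then forward invariance forces the tree to meet the boundary of the critical value component in a fixed point of $f_c^p$ disconnecting the Julia set, which must be $x$. Your cut-point argument via the two rays is cleaner, but it needs one more unproved input: that the other parabolic orbit points lie in the sector \emph{not} containing $U_0$. Your justification --- that the parabolic orbit disconnects $J_c$, ``in particular'' the two rays together with $x$ separate $U_0$ from the remainder of $K_c$ --- does not deliver this: disconnection says nothing about which side of the two rays the points $f_c(x),\dots,f_c^{p-1}(x)$ sit on (and the sector containing $U_0$ does contain other parts of $K_c$, namely the decorations attached to $\partial U_0$). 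What you need is the characteristic-sector property (the sector at $x$ containing the critical value contains no other point of the parabolic orbit), which is again part of the orbit-portrait theory of \cite{MR2020986} --- the same external input the paper cites for part (1). Granted that input, your separation argument for (2) is correct and arguably more direct than the paper's endpoint analysis.
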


One can find a proof of (\ref{item:endpoint}) for holomorphic quadratic
polynomials in Eberlein's thesis \cite{eberlein_rational_1999}, which
can be applied to our case. Hence we give an outline of the proof here.
\begin{proof}
 For the first part, see \cite[Corollary~4.2]{MR2020986}. 

 For the second, observe that all the end points of a given parabolic tree
 are contained in $\{0,f_c(0),\dots,f_c^{p-1}(0)\}$.
 Therefore, $f_c^k(0)$ is an endpoint for some $0<k<p$.
 Since a parabolic tree is forward invariant and $f_c^{k-1}$
 is one-to-one near $f_c(0)$, $f_c(0)$ is also an endpoint.

 By forward invariance, the intersection with a given parabolic tree and the
 boundary of the critical value component is a fixed point of $f_c^p$
 which disconnects the Julia set. Hence it follows that it must be equal
 to $x$. 
\end{proof}
By primitivity, the intersection of such an arc and the Julia set is a
Cantor set.
By the lemma, a parabolic tree corresponds to a simple closed curve
in the repelling Ecalle cylinder for $x$, which we call a \emph{Hubbard loop}. 

\subsection{Real analyticity of parabolic trees and path
connectivity of the tricorn}

Most part of the proof of non-path connectivity of multicorns in
\cite{hubbard_multicorns_2014} works not only for OPPPP case,
but also in general.
We apply their results only to the case explained in the
introduction; i.e., for the parameter of critical Ecalle height zero in
the root arc of $\HApAp$.
Although some proofs become simpler for this case, we state and prove in
general.

The following follows by the same argument of the first part of the
proof of \cite[Lemma~5.8]{hubbard_multicorns_2014}:
\begin{lem}
 \label{lem:analytic arc}
 Let $c \in \cM^*$ be on the parabolic root arc of a hyperbolic
 component of odd period $p$. If $K(f_c)$ contains an analytic arc $\gamma$
 connecting two bounded Fatou components, 
 then there exists an analytic arc $\gamma'$
 such that 
 \begin{enumerate}
  \item $\displaystyle \gamma' \subset \bigcup_{n \ge 0} f_c^n(\gamma)$.
  \item there exists a parabolic tree which contains $\gamma'$.
  \item $\gamma'$ is invariant by $f_c^p$.
  \item $\gamma'$ contains the characteristic parabolic point.
 \end{enumerate}
\end{lem}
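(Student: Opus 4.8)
The plan is to construct $\gamma'$ as a carefully chosen piece of the forward orbit $\bigcup_{n\ge 0} f_c^n(\gamma)$, using the local parabolic dynamics at the characteristic parabolic point $x$ to pin down its endpoint and to force invariance; property (1) is then automatic by construction. Since $f_c^2$ is holomorphic, every forward image $f_c^n(\gamma)$ is again a real-analytic arc lying in $K_c$. At the parabolic parameter $c$ every bounded Fatou component is eventually mapped into the cyclic parabolic basin, so the two Fatou components joined by $\gamma$ are preperiodic; replacing $\gamma$ by a forward image I may assume it connects two Fatou components of the parabolic cycle. I would then track how the forward images meet the boundaries of these periodic components. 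Because $\gamma$ joins two distinct Fatou components it must cross $J_c$, and the separating points it crosses are eventually parabolic; pushing forward, an analytic subarc of the orbit reaches the boundary of the critical value Fatou component $U_0$ at $x$.

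The crucial local step is to control the landing at $x$. Let $\varphi$ be the repelling Fatou coordinate at $x$, so that $f_c^p$ is conjugated to $w\mapsto \bar w+\frac{1}{2}$ on the outgoing Ecalle cylinder. A real-analytic arc accumulating on $x$ from outside $U_0$ corresponds under $\varphi$ to a real-analytic curve in the cylinder; since an analytic curve cannot oscillate, it has well-defined asymptotics and therefore genuinely lands at $x$ along a definite direction rather than merely accumulating on it. Extracting the component of the orbit terminating at $x$ and disjoint from $U_0$ produces an analytic arc $\gamma_0 \subset \bigcup_{n\ge 0} f_c^n(\gamma)$ landing at $x$, which already gives (1) and (4).

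It remains to upgrade $\gamma_0$ to an $f_c^p$-invariant arc. Because $x$ is fixed and $U_0$ is invariant under $f_c^p$, the germ of $\gamma_0$ at $x$ is carried by $f_c^p$ to another analytic germ landing at $x$ from outside $U_0$; in the coordinate $\varphi$ this is the reflection $w\mapsto \bar w+\frac{1}{2}$ acting on the associated Hubbard loop. Taking the connected component through $x$ of $\bigcup_{m\ge 0} f_c^{mp}(\gamma_0)$ — equivalently, choosing the loose parabolic tree whose Hubbard loop is the one determined by $\gamma_0$ — yields an analytic arc $\gamma'$ with $f_c^p(\gamma')=\gamma'$, giving (3). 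Since $\gamma'$ lands at $x$, avoids $U_0$, and is analytic and invariant, it is an admissible choice of the arc furnished by Lemma~\ref{lem:arc converging parabolic}, so it lies in a parabolic tree, giving (2).

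The hardest step is the middle one: rigorously passing from ``$\gamma$ connects two Fatou components'' to ``a forward image lands analytically at the characteristic point $x$.'' One must track the forward dynamics near the entire parabolic cycle, use analyticity to exclude mere accumulation, and verify that the landing occurs at the \emph{characteristic} parabolic point — the one on $\partial U_0$ distinguished by the two period-$2p$ rays of Lemma~\ref{lem:arc converging parabolic} — rather than at some other point of the orbit. This is precisely where the hypotheses that $c$ lies on a root arc and that $\gamma$ joins two bounded Fatou components enter.
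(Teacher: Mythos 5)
The decisive step of your proposal — step (3), where invariance is claimed — contains a genuine gap, and it sits exactly where the whole content of the lemma lies. You assert that the connected component through $x$ of $\bigcup_{m\ge 0} f_c^{mp}(\gamma_0)$ is an analytic arc with $f_c^p(\gamma')=\gamma'$. But nothing in your argument forces $f_c^p(\gamma_0)$ to overlap $\gamma_0$ at all: a priori $\gamma_0, f_c^p(\gamma_0), f_c^{2p}(\gamma_0),\dots$ could be an infinite fan of pairwise distinct analytic arcs meeting only at the common landing point $x$, in which case the connected component through $x$ is neither an arc, nor analytic, nor invariant. Your parenthetical justification (``choosing the loose parabolic tree whose Hubbard loop is the one determined by $\gamma_0$'') is circular: $\gamma_0$ determines a \emph{closed} loop in the outgoing Ecalle cylinder only if its germ at $x$ is already invariant under the return map $f_c^{2p}$ — which is precisely what has to be proved. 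Soft topology plus Fatou coordinates cannot yield this; one must use the structure of the Julia set.

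The missing argument is the actual core of the proof the paper cites (the first part of the proof of Lemma~5.8 in \cite{hubbard_multicorns_2014}): near $x$, on the side away from the critical value component, both $\gamma_0$ and $f_c^p(\gamma_0)$ must contain the Julia points that separate $x$ from the far ends of these arcs; this set is the Julia part of the parabolic tree near $x$, it is invariant under the return dynamics, and by primitivity (Lemma~\ref{lem:odd-primitive}; see also the remark after Lemma~\ref{lem:arc converging parabolic}) it is a Cantor set accumulating at $x$. Two arcs analytic at $x$ sharing infinitely many points accumulating at $x$ have the same tangent line there, and, written as graphs over that line, coincide near $x$ by the identity theorem. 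Only this forces $f_c^p(\gamma_0)$ to be an analytic continuation of $\gamma_0$; after that, the increasing union $\gamma'=\bigcup_{m\ge 0}f_c^{mp}(\gamma_0)$ is a single analytic arc, invariant as required, and your conclusions (1)--(4) follow. Separately, two inaccuracies in your reduction should be repaired, though they are not fatal: the separating points on $\gamma$ form a Cantor set, almost none of which are ``eventually parabolic'' — the correct statement is that the root points of the periodic Fatou components are themselves separating points, hence lie \emph{on} suitable forward images of $\gamma$, so $x$ is a point of the arc and your landing-versus-accumulation discussion addresses a non-issue (analyticity at $x$ holds because all critical points of $f_c^n$ lie in the Fatou set, which also disposes of cusps at images of $0$); and you should exclude the degenerate case $f_c^n(U_1)=f_c^n(U_2)$ when pushing forward.
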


\begin{cor}
 \label{cor:analytic tree}
 Under the assumptions and the notations in the lemma, let $q|p$ be
 the smallest integer such that $\gamma'$ is invariant by $f_c^q$.
 Then $q$ is strictly smaller than $p$.
 Moreover if $q>1$, then $f_c$ is renormalizable of period $q$.
 The period $q$ renormalization has an unbranched and analytic parabolic
 tree contained in $\gamma'$.
\end{cor}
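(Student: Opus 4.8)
The plan is to dispose of the three assertions in order, treating the strict inequality $q<p$ as the real content and the renormalization statement as a consequence of the standard thickening construction.

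I would first check that $q$ is well defined and divides $p$. Consider $S=\{n\ge 1:\ f_c^n(\gamma')=\gamma'\}$. It is closed under addition, and it is also closed under positive differences: if $a>b$ both lie in $S$, then $f_c^{\,a-b}(\gamma')=f_c^{\,a-b}\bigl(f_c^{\,b}(\gamma')\bigr)=f_c^{\,a}(\gamma')=\gamma'$, so $a-b\in S$. Hence $S=q\N$ for its least element $q$, and since $p\in S$ by Lemma~\ref{lem:analytic arc}(3) we obtain $q\mid p$. Everything then reduces to showing $q\neq p$.

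For the inequality $q<p$, the clean reformulation is that $q<p$ holds if and only if $\gamma'$ meets the parabolic orbit in more than one point: since $f_c^{\,q}(\gamma')=\gamma'$ forces $f_c^{\,q}(x)\in\gamma'$, and $f_c^{\,q}(x)=x$ only when $p\mid q$, a second parabolic point $f_c^{\,q}(x)\neq x$ on $\gamma'$ is exactly the assertion $q<p$. I would establish this using the hypothesis that $\gamma$ connects two \emph{distinct} bounded Fatou components together with $\gamma'\subset\bigcup_{n\ge 0}f_c^{\,n}(\gamma)$: the arc $\gamma'$ therefore connects two distinct bounded Fatou components of the period-$p$ parabolic cycle while passing through $x$. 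Projecting $\gamma'$ into the repelling Ecalle cylinder at $x$ yields the Hubbard loop, an essential simple closed analytic curve invariant under the anti-holomorphic involution induced by $f_c^{\,p}$. Following the argument of Hubbard--Schleicher, the two Fatou components joined by $\gamma'$ sit at a proper combinatorial return time within the cycle, so the first iterate preserving $\gamma'$ is a proper divisor $q<p$; equivalently, tracing $\gamma'$ from one Fatou component toward the other forces it through the parabolic point on a boundary other than that of the critical value component, producing the required second parabolic point. This step---ruling out that $\gamma'$ is ``rooted'' at $x$ alone---is where I expect the main difficulty to lie, since it is precisely the anti-holomorphic analyticity at $x$ that must be exploited.

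Granting $q<p$ and assuming $q>1$, I would produce the renormalization by the usual thickening procedure. The arc $\gamma'$ is $f_c^{\,q}$-invariant, analytic, contains the characteristic parabolic point $x$, and lies in the fiber $K$ carrying the critical point; cutting along the two period-$2p$ rays landing at $x$ (Lemma~\ref{lem:arc converging parabolic}(1)) and thickening the resulting puzzle piece gives topological disks $U'\Subset U$ with $f_c^{\,q}\colon U'\to U$ proper of degree $2$ and $K(f_c^{\,q}\colon U'\to U)=K$ connected, exactly as in the $c_0$-renormalization construction of Section~\ref{sec:ren}. Since $q\mid p$ is odd, this renormalization is anti-holomorphic, so it is a genuine period-$q$ renormalization of $f_c$ in the sense defined above. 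Finally, the parabolic tree of this renormalization is contained in $\gamma'$: the renormalized map's parabolic orbit, of period $p/q>1$, and its critical orbit both lie along the invariant arc $\gamma'$, so a parabolic Hubbard tree for $f_c^{\,q}\colon U'\to U$ can be taken inside $\gamma'$. Being a sub-arc of the single arc $\gamma'$, it has no branch points and is therefore unbranched, and it is analytic because $\gamma'$ is. This settles the three assertions, with $q<p$ the only step requiring genuine work.
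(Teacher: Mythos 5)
Your proposal has two genuine gaps, and they sit exactly at the two steps you yourself identify as carrying the content. First, the inequality $q<p$. Your ``clean reformulation'' only establishes the trivial direction: if $q<p$ then $f_c^q(x)\neq x$ lies on $\gamma'$. To run the argument the other way you would need: \emph{if some} $f_c^j(x)\neq x$ lies on $\gamma'$, \emph{then} $\gamma'$ is invariant under a proper sub-iterate $f_c^j$ --- and containing a second point of the parabolic orbit is strictly weaker than invariance under $f_c^j$, so concluding $q<p$ from it is affirming the consequent. The rest of that paragraph (``following the argument of Hubbard--Schleicher, the two Fatou components sit at a proper combinatorial return time'') is a gesture, not an argument, and you concede as much. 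The paper gets $q<p$ by an entirely different and cleaner mechanism: it first applies Theorem~\ref{thm:tree-renorm}, whose dichotomy says that the filled Julia set of the period-$q$ renormalization equals the closure of the single Fatou component containing $0$ \emph{if and only if} $q=p$; since $\gamma'$ is contained in that filled Julia set and meets two distinct (hence disjoint) Fatou components, $q=p$ is impossible. So in the paper $q<p$ is a corollary of renormalizability, not a prerequisite for it.

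Second, renormalizability itself. You invoke ``the usual thickening procedure\dots exactly as in the $c_0$-renormalization construction of Section~\ref{sec:ren},'' but Section~\ref{sec:ren} contains no construction --- it only \emph{defines} $c_0$-renormalizability; the existence of a quadratic-like restriction whose filled Julia set is the prescribed fiber is precisely the content of the compactness theorem (Theorem~\ref{thm:IH-cpt}, Theorem~D of \cite{MR2970463}), which is a nontrivial result requiring \emph{primitive} combinatorics, and which can fail without primitivity. Moreover, direct Douady--Hubbard thickening is exactly the step that is delicate at a parameter carrying a parabolic cycle, which is why the paper avoids it. The paper's actual route (through Theorem~\ref{thm:tree-renorm}) is: the invariant subtree yields an invariant rational sub-lamination (Theorem~\ref{thm:tree-comb_renorm}); Kiwi's realization (Lemma~\ref{lem:Kiwi-realization}) produces a center $c_0$ with that lamination; since $q\,|\,p$ is \emph{odd}, $c_0$ is primitive (Lemma~\ref{lem:odd-primitive}); and then Theorem~\ref{thm:IH-cpt} gives $\cC(c_0)=\cR(c_0)$, i.e., combinatorial renormalizability upgrades to genuine renormalizability. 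Note where oddness of $q$ really enters: not merely to make the renormalization anti-holomorphic, as in your write-up, but to secure primitivity, without which the whole existence step collapses. Your well-definedness of $q$ with $q\,|\,p$, and the observation that a parabolic tree inside the arc $\gamma'$ is automatically unbranched and analytic, are fine but are the easy parts.
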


\begin{proof}
 If $\gamma'$ is a loose parabolic Hubbard tree for $f_c$, then $q=1$
 and we are done.

 Otherwise, we can apply Theorem~\ref{thm:tree-renorm} 
 and it follows that $q>1$ and $f_c$ is renormalizable of period $q$.
 The renormalization has a loose parabolic Hubbard tree contained in an
 analytic arc $\gamma'$.

 Moreover, since $\gamma'$ intersects two Fatou components,
 we have $q<p$.
\end{proof}

The following theorem follows by the proofs of \cite[Lemma~5.10,
Theorem~6.1, Theorem~6.3]{hubbard_multicorns_2014}:
\begin{thm}
 \label{thm:non-path connectivity}
 Let $\cH$ be the hyperbolic component of odd period and
 let $\cA \subset \partial \cH$ be the parabolic root arc.
 Let $c \in \cA$ be a parameter with zero critical Ecalle height.
 Let $T$ be its Hubbard tree for $f_c$,
 $x$ be the characteristic parabolic point and $\theta,\ \theta'$ be the
 landing angles for $x$.
 
 If the Hubbard loop is not equal to the equator, then 
 there exist $h>0$ and $\theta_n, \theta_n',
 \tilde{\theta}_n,\tilde{\theta}_n' \in \Q/\Z$ 
 such that
 \begin{itemize}
  \item $\theta_n ,\tilde{\theta}_n \to \theta$ and $\theta_n',
	\tilde{\theta}_n' \to \theta'$ .
  \item The dynamical rays $R_c(\theta_n)$ and $R_c(\theta_n')$
	land at $w_n$ of Ecalle height
	$h$, and $R_c(\tilde{\theta}_n)$ and
	$R_c(\tilde{\theta}_n')$ land at $\tilde{w}_n$ of Ecalle height
	$-h$. Both $w_n$ and $\tilde{w}_n$ are in the Hubbard tree and
	repelling preperiodic.
  \item There exists an arc $\cA' \subset \cA$ of positive length
	such that the limits of parameter rays
	$R_{\cM^*}(\tilde{\theta}_n)$ and $R_{\cM^*}(\theta_n')$
	contains $\cA'$.
 \end{itemize}
 In particular, consider any arc $\gamma$ in $\cM^*$ accumulating
 $\cA$, contained in the component of the complement of
 $R_{\cM^*}(\theta) \cup R_{\cM^*}(\theta') \cup \cA$ not
 containing $\cH$.
 Then $\gamma$ accumulates all points in $\cA'$ and does not
 converge to a point. 
\end{thm}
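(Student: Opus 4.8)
The plan is to transport the oscillation of the (loose) parabolic Hubbard tree across the equator into an oscillation of parameter rays accumulating on a nondegenerate subarc of $\cA$. First I would work in the repelling Ecalle cylinder at the characteristic parabolic point $x$. Since $f_c$ is anti-holomorphic and $p$ is odd, the anti-holomorphic first-return map induces on this cylinder the glide reflection $w \mapsto \bar{w}+\frac12$, whose invariant geodesic is the equator, at Ecalle height $0$; this reflection reverses the sign of the Ecalle height. The parabolic tree corresponds to the Hubbard loop $\ell$, an essential simple closed curve in the cylinder that is invariant under this reflection. The hypothesis that $\ell$ is not the equator then forces, by the reflection symmetry, that $\ell$ attains a maximal height $h>0$ and minimal height $-h$; this is the $h$ of the statement. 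This is exactly where anti-holomorphicity is essential: in the holomorphic situation the return map would be a pure rotation of the cylinder, and no such forced symmetric oscillation occurs. The choice of $c$ of zero critical Ecalle height fixes the base configuration, with the critical value sitting on the equator.

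Next I would produce the four angle sequences in the dynamical plane. Repelling preperiodic points are dense in $J_c$ and are precisely the landing points of rational dynamical rays, so I would select repelling preperiodic points $w_n$ at Ecalle height $h$ and $\tilde{w}_n$ at Ecalle height $-h$ lying on the lifts of $\ell$ inside the parabolic tree and converging to $x$. Because $x$ is the landing point of exactly the two rays $R_c(\theta)$, $R_c(\theta')$ by Lemma~\ref{lem:arc converging parabolic}, and the arc of the tree converging to $x$ avoids the critical value Fatou component, the pairs of rays landing at $w_n$ and at $\tilde{w}_n$ must have angles converging to $\{\theta,\theta'\}$. Labelling them gives $\theta_n,\tilde{\theta}_n \to \theta$ and $\theta_n',\tilde{\theta}_n' \to \theta'$, with $R_c(\theta_n),R_c(\theta_n')$ landing at $w_n$ and $R_c(\tilde{\theta}_n),R_c(\tilde{\theta}_n')$ landing at $\tilde{w}_n$.

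The main step, and the hard part, is the passage to parameter space, adapting \cite[Lemma~5.10, Theorem~6.1, Theorem~6.3]{hubbard_multicorns_2014}. Here I would use the standard correspondence between dynamical and parameter rays together with the fact that $\cA$ is real-analytically parametrized by the critical Ecalle height. The claim is that, because $w_n$ and $\tilde{w}_n$ sit at the two definite heights $\pm h$ on opposite sides of the equator, the parameter rays $R_{\cM^*}(\theta_n')$ and $R_{\cM^*}(\tilde{\theta}_n)$ accumulate not at a single point but on a subarc $\cA' \subset \cA$ whose length is bounded below in terms of $h$. Establishing the positivity of the length of $\cA'$ is the crux: one must show that as the critical Ecalle height of the parameter ranges over an interval, the corresponding dynamical configuration (two rays of angles near $\theta$ and $\theta'$ landing together at a preperiodic point of height $\pm h$) persists, so that the limiting accumulation set on $\cA$ is a genuine interval rather than a point. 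The delicate part is the asymptotic and transversality control ensuring that the fixed height $h>0$ translates into a parameter interval of positive length.

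Finally, for the ``in particular'' statement, I would argue topologically, using that $R_{\cM^*}(\theta)\cup R_{\cM^*}(\theta')\cup \cA$ separates the plane into two components even when the two parameter rays fail to land (as observed in the remark in Section~\ref{sec:ren}). The accumulating parameter rays $R_{\cM^*}(\theta_n')$ and $R_{\cM^*}(\tilde{\theta}_n)$, having angles converging to $\theta'$ and $\theta$, cross the component not containing $\cH$ and separate it near $\cA$. Any arc $\gamma \subset \cM^*$ accumulating on $\cA$ from this side must therefore cross all but finitely many of these rays, and hence accumulate on their limit set, which contains the positive-length arc $\cA'$. Since an arc accumulating on a nondegenerate continuum cannot converge to a point, $\gamma$ cannot land, which is the desired conclusion.
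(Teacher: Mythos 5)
The paper offers no self-contained argument for this theorem: it states only that the result ``follows by the proofs of'' Lemma~5.10, Theorem~6.1 and Theorem~6.3 of \cite{hubbard_multicorns_2014}, so the real comparison is with the Hubbard--Schleicher argument that the paper imports. Your first two paragraphs reconstruct the skeleton of that argument correctly: since the period is odd, the return map acts on the outgoing cylinder as the glide reflection $w \mapsto \bar{w}+\frac{1}{2}$, so the invariant Hubbard loop, if different from the equator, must attain heights $\pm h$ symmetrically for some $h>0$; backward invariance of the tree and the $f_c^{2p}$-invariance of outgoing Ecalle height then give repelling preperiodic points $w_n$, $\tilde{w}_n$ at heights $\pm h$ converging to $x$ along the tree, with ray angles converging to $\theta$, $\theta'$ (you should still justify that the extreme heights are realized at points of $J_c$ on the tree, not merely on arcs crossing Fatou components, and that such points can be taken preperiodic). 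You also single out the same mixed pair of families $R_{\cM^*}(\tilde{\theta}_n)$, $R_{\cM^*}(\theta_n')$ as in the statement, so structurally you are on the route the paper cites.

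However, two genuine gaps remain. First, the crux --- that the limit sets of these parameter rays all contain one common subarc $\cA'$ of positive length --- is asserted, not proved, and the tools you invoke cannot deliver it: the ``standard correspondence between dynamical and parameter rays'' is precisely what fails at parabolic parameters (stability of landing of the rays $R_c(\theta_n)$, $R_c(\tilde{\theta}_n)$ breaks down there), and real-analytic parametrization of $\cA$ by critical Ecalle height says nothing by itself about where parameter rays accumulate. The actual content of \cite[Lemma~5.10, Theorem~6.1]{hubbard_multicorns_2014} is an analysis via perturbed Fatou coordinates (parabolic implosion): for parameters near the arc, the relative phase between incoming and outgoing cylinders varies, and one shows that the dynamical configuration at height $\mp h$ controls the ray's behaviour over a whole interval of critical Ecalle heights whose length is governed by $h$; this is the missing idea, and without it the positivity of the length of $\cA'$ is merely restated. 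Second, your final topological step is backwards: an arc $\gamma \subset \cM^*$ can never \emph{cross} a parameter ray, because parameter rays lie in $\C \setminus \cM^*$; and even if crossings occurred, crossing a curve does not imply accumulating on its limit set. The correct argument is the dual one: since $\gamma$ must avoid every ray, the closed connected barriers formed by each ray together with its accumulation set (which contains $\cA'$) separate the component of the complement not containing $\cH$, and an arc in $\cM^*$ accumulating on $\cA$ is trapped by these nested barriers; it is this confinement that forces the accumulation set of $\gamma$ to contain $\cA'$ and hence prevents $\gamma$ from converging to a point.
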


\begin{cor}
 \label{cor:non-path connectivity}
 Let $\cH$ be the hyperbolic component of odd period $p>1$ and
 let $\cA \subset \partial \cH$ be the parabolic root arc.
 Let $c \in \cA$ be the parameter with zero critical Ecalle height.
 Assume $f_c$ has a renormalization $g=f_c^n|_{U'}:U' \to U$ with maximal
 $n<p$.
 Let $T$ be a parabolic tree and $T' \subset T$ be a parabolic tree
 for the renormalization.
 
 If $T'$ contains a periodic point $y$ of non-real
 multiplier, then the conclusion of Theorem~\ref{thm:non-path
 connectivity} holds.
\end{cor}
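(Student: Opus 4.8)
The plan is to derive the corollary from Theorem~\ref{thm:non-path connectivity} by checking its single standing hypothesis, namely that the Hubbard loop is not the equator; the rest of the conclusion is then quoted verbatim from that theorem. Thus it suffices to establish the implication
\[
 (y\in T'\text{ has non-real multiplier})
 \ \Longrightarrow\
 (\text{Hubbard loop}\neq\text{equator}).
\]
I would prove this in contrapositive form: assuming the Hubbard loop coincides with the equator, I will show that every periodic point lying on the renormalization tree $T'$ has \emph{real} multiplier, contradicting the hypothesis on $y$.

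First I would extract the analytic consequence of the equatorial hypothesis. The equator is the real-analytic geodesic $\R/\Z$ in the repelling Ecalle cylinder, and the repelling Fatou coordinate is holomorphic; hence if the Hubbard loop equals the equator, the arc of the parabolic tree converging to the characteristic parabolic point $x$ produced by Lemma~\ref{lem:arc converging parabolic} is the preimage of the equator under an analytic coordinate, and so is a real-analytic arc. In particular $K(f_c)$ then contains a real-analytic arc joining two bounded Fatou components, so Lemma~\ref{lem:analytic arc} applies and yields an analytic, $f_c^p$-invariant arc $\gamma'$ through $x$ contained in a parabolic tree. Feeding $\gamma'$ into Corollary~\ref{cor:analytic tree} produces a renormalization of $f_c$ of some period $q<p$ whose (unbranched) parabolic tree is a real-analytic arc contained in $\gamma'$. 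Since $n$ is the \emph{maximal} renormalization period below $p$ and $q$ is itself such a period, we have $q\le n$; as the renormalizations of $f_c$ at this fixed parameter are nested, the period-$n$ renormalization refines the period-$q$ one, so its filled Julia set, and hence its parabolic tree $T'$, is contained in $\gamma'$. Therefore $T'$ is itself a real-analytic arc.

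It remains the elementary step. A periodic point $y\in T'$ of non-real multiplier lies in $J_c$ (it is repelling) and, by the convention for odd periods together with the fact that odd-period points have real multiplier, $y$ has even period $m$, so the return map $h=f_c^{m}$ is holomorphic with $h(y)=y$ and complex-linear derivative $Dh_y$ equal to multiplication by the multiplier $\rho$. The arc $T'$ is a smooth $h$-invariant curve through $y$, so its tangent vector $v\neq0$ at $y$ satisfies $\rho v\in\R\cdot v$; a nonzero complex number scaling a real line into itself is real, so $\rho\in\R$. This contradicts $\im\rho\neq0$, completing the contrapositive. With the equatorial hypothesis thereby excluded, Theorem~\ref{thm:non-path connectivity} yields the stated conclusion for any arc $\gamma$ in $\cM^*$ accumulating the root arc $\cA$ in the relevant complementary component.

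The step I expect to be the main obstacle is the matching in the second paragraph: confirming that the abstract analytic arc $\gamma'$ furnished by Lemma~\ref{lem:analytic arc} and Corollary~\ref{cor:analytic tree} actually contains the specific renormalization tree $T'$ on which $y$ sits. This is precisely where the maximality of $n$ and the nested (divisibility) structure of renormalizations must be used carefully: one must ensure $q\mid n$ so that $T'\subset\gamma'$ (rather than merely $q\le n$), and that $y$ lands on the Julia-set part of the tree—canonical and hence analytic—rather than inside a Fatou component or at a branch point. Once $y\in\gamma'\cap J_c$ is secured, the real-multiplier conclusion is immediate.
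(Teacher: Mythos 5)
Your proposal is essentially the paper's own proof: assume the Hubbard loop equals the equator, deduce that the tree is (real-)analytic near the characteristic point, invoke Lemma~\ref{lem:analytic arc} and Corollary~\ref{cor:analytic tree} together with the maximality of $n$ to place $y$ on the analytic invariant arc $\gamma'$, and contradict the non-real multiplier of $y$ by the standard invariant-analytic-arc argument. The only deviation is your intermediate overclaim that the period-$n$ filled Julia set, hence all of $T'$, lies in $\gamma'$ (an arc cannot contain a filled Julia set, and Fatou-parts of loose trees are non-canonical); the paper claims only $y \in T'\cap J_c \subset \gamma'$, which suffices since $y$ is repelling---precisely the caveat you raise and resolve yourself in your final paragraph.
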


Note that $y$ is repelling because the only non-repelling cycle is
the parabolic cycle, so it is of multiplier one, and that parabolic
trees are uniquely determined on the Julia set, hence the assumption
does not depend on the choice of parabolic trees.
See also the remark after \cite[Theorem~6.2]{hubbard_multicorns_2014}.

\begin{proof}
 Let $\gamma \subset T'$ be an subarc converging to the principal
 parabolic point $x$ contained in the domain of definition of the
 repelling Fatou coordinate at $x$. Then $\gamma$ corresponds to the
 Hubbard loop in the Ecalle cylinder.
 Observe that $\gamma$ intersects infinitely many (in particular two)
 Fatou components, since $\gamma \cap J_c$ is a Cantor set.
 
 Now assume that the Hubbard loop coincides with the equator. 
 Then we can apply Lemma~\ref{lem:analytic arc} to a subarc of
 $\gamma$.
 Let $\gamma'$ be the arc in the Lemma.
 Then by Corollary~\ref{cor:analytic tree} and the maximality of the
 period of the renormalization $g$, it follows that
 $y \in T' \cap J(f) \subset \gamma'$. 
 In particular, $y \in \bigcup_{n \ge 0}f_c^n(\gamma)$.
 However, by assumption, parabolic tree is not analytic at $y$,
 hence it is a contradiction.

 Therefore, we can apply Theorem~\ref{thm:non-path connectivity}.
\end{proof}

\subsection{Discontinuous straightening map}

Combining those results with Corollary~\ref{cor:real surjectivity}, we can
prove discontinuity of straightening maps as follows.
Let $\HAp \subset \Int \cM^*$ be the hyperbolic
component containing $\cAp$. Let $\cAp'=-1.75=\sup(\HAp \cap \R)$ be
the landing point of the umbilical cord for $\HAp$.

\begin{thm}
 \label{thm:discont}
 Let $c_0$ be a parameter with periodic critical point of odd period
 $p>1$. Let $\cH=\chi_{c_0}^{-1}(\HAp)$ be the hyperbolic
 component in $\cC(c_0)=\cR(c_0)$ corresponding to the airplane
 component. Let $b=\chi_{c_0}^{-1}(\cAp') \in \partial \cH$ be the
 unique parameter on the parabolic root arc with critical Ecalle height
 zero.

 If the (unique) period two cycle of a $c_0$-renormalization
 $g=f_{b}^p|_{U'}:U' \to U$ for $f_{b}$ has non-real multiplier,
 then $\chi_{c_0}^{-1}(c)$ converge to an arc of positive length
 in the parabolic root arc $\cA$ in $\partial \cH$ as $c \searrow
 \cAp'$.  In particular, $\chi_{c_0}$ is not continuous.
\end{thm}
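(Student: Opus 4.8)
The plan is to reduce Theorem~\ref{thm:discont} to a direct application of Corollary~\ref{cor:non-path connectivity} together with the real surjectivity statement in Corollary~\ref{cor:real surjectivity}, so that the main content becomes a verification of hypotheses rather than a new construction. First I would set up the objects carefully: by Corollary~\ref{cor:cpt} the period is odd, so $c_0$ is primitive and $\cC(c_0)=\cR(c_0)$ is compact, and the straightening map $\chi_{c_0}$ is a well-defined injection by Theorem~\ref{thm:IH-inj}. Since $\cAp'$ lies on the parabolic root arc of $\HAp$ with critical Ecalle height zero, I would argue that $b=\chi_{c_0}^{-1}(\cAp')$ is characterized as the unique parameter on the parabolic root arc $\cA$ of $\cH$ with critical Ecalle height zero. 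The key point here is that straightening respects the parabolic/hyperbolic structure: the $c_0$-renormalization $g=f_b^p|_{U'}:U'\to U$ is hybrid equivalent to $f_{\cAp'}$, and the critical Ecalle height is a conformal (hence hybrid) invariant of the renormalized map, so the renormalization $g$ realizes the airplane-root parabolic with Ecalle height zero.

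Next I would identify the parabolic tree structure. The period of the parabolic cycle of $f_b$ equals $p$ (the full renormalization period), and the parabolic cycle of the renormalization $g$ has period one relative to $g$, i.e.\ period $p$ in $f_b$. The renormalization $g$ has, by the straightening $f_{\cAp'}$, a period-two cycle whose multiplier is exactly the multiplier of the period-two cycle of $f_{\cAp'}$ transported by the hybrid conjugacy. Here the hybrid conjugacy is quasiconformal but conformal on the filled Julia set, so it preserves multipliers of repelling cycles; the hypothesis that this period-two cycle of $g$ has non-real multiplier then says precisely that there is a periodic point $y$ on the parabolic tree $T'$ of the renormalization with non-real multiplier. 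This is the bridge to Corollary~\ref{cor:non-path connectivity}: I take $T$ a parabolic tree of $f_b$ and $T'\subset T$ the parabolic tree of the maximal renormalization, and the point $y$ of non-real multiplier lies on $T'$.

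With these identifications, Corollary~\ref{cor:non-path connectivity} applies verbatim and yields the conclusion of Theorem~\ref{thm:non-path connectivity}: there is a subarc $\cA'\subset\cA$ of positive length and sequences of parameter rays whose limits contain $\cA'$, so any arc in $\cM^*$ accumulating $\cA$ from the side opposite $\cH$ accumulates all of $\cA'$ and does not converge to a point. It remains to translate this into the statement about $\chi_{c_0}^{-1}(c)$ as $c\searrow\cAp'$. By Corollary~\ref{cor:real surjectivity} the image $\chi_{c_0}(\cR(c_0))$ contains the real connectedness locus $[-2,1/4]=\cM^*\cap\R$; in particular for real $c$ slightly larger than $\cAp'$ (on the airplane umbilical cord, which is a straight real segment) the preimages $\chi_{c_0}^{-1}(c)$ are well-defined and lie in $\cC(c_0)$. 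These preimages trace out an arc in $\cM^*$ accumulating $\cA$ from the correct side, so the non-landing conclusion forces $\chi_{c_0}^{-1}(c)$ to accumulate the positive-length arc $\cA'$ rather than converging to the single point $b$. Since $c\mapsto\chi_{c_0}^{-1}(c)$ would have to converge to $b=\chi_{c_0}^{-1}(\cAp')$ if it were continuous, this shows $\chi_{c_0}^{-1}$ (hence $\chi_{c_0}$) is discontinuous.

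The main obstacle I anticipate is the faithful transfer of combinatorial and analytic data across the renormalization: one must check that the hybrid conjugacy between $g$ and $f_{\cAp'}$ genuinely identifies the characteristic parabolic point, the root arc, and the critical Ecalle height, so that the non-real multiplier hypothesis on $g$ is exactly what Corollary~\ref{cor:non-path connectivity} requires, and that the preimages $\chi_{c_0}^{-1}(c)$ for real $c$ approach $\cA$ from the side of the complement of $R_{\cM^*}(\theta)\cup R_{\cM^*}(\theta')\cup\cA$ \emph{not} containing $\cH$, matching the hypothesis of Theorem~\ref{thm:non-path connectivity}. Verifying that the approach is from the correct side — rather than merely that some arc accumulates $\cA$ — is the delicate geometric point; everything else is bookkeeping built on the cited results.
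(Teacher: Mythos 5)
Your skeleton (reduce everything to Corollary~\ref{cor:non-path connectivity} plus Corollary~\ref{cor:real surjectivity}) is indeed the paper's, but two of your bridging steps are genuinely wrong or missing. The first is your claim that the hybrid conjugacy between $g$ and $f_{\cAp'}$ ``preserves multipliers of repelling cycles.'' This is false: near a repelling point of the Julia set a hybrid conjugacy is merely quasiconformal (the condition $\bar{\partial}\varphi=0$ a.e.\ on $K$ gives no pointwise control on $J$), and quasiconformal conjugacies do not preserve repelling multipliers. Worse, if your claim were true the theorem would be about the empty set: the unique period-two cycle of $f_{\cAp'}$ is $\{1/2,-3/2\}$, with real multiplier $(f_{\cAp'}^2)'(1/2)=-3$, so the period-two cycle of $g$ would always have real multiplier and the hypothesis could never be satisfied. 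The distortion of repelling multipliers under straightening is exactly what makes the hypothesis non-vacuous, and it is why Section~\ref{sec:computations} needs rigorous numerics (Lemma~\ref{lem:disjoint}) to verify it when $c_0=\cAp$. What you actually need --- and what the paper's proof supplies in its first sentence --- is that the Hubbard tree of $f_{\cAp'}$ is the interval $[-1.75,1.3125]$, which contains this period-two cycle; transporting the \emph{tree} (not the multiplier) through the conjugacy places the period-two cycle of $g$ on the parabolic tree $T'$, and the non-real multiplier required by Corollary~\ref{cor:non-path connectivity} is then precisely the hypothesis of the theorem. You assert ``the point $y$ of non-real multiplier lies on $T'$'' but give no justification; this interval computation is the justification.

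The second gap is in the translation to $\chi_{c_0}^{-1}$. You say the preimages $\chi_{c_0}^{-1}(c)$, $c\in(\cAp',\cAp'+\varepsilon)$, ``trace out an arc in $\cM^*$ accumulating $\cA$.'' Since $\chi_{c_0}^{-1}$ is not known to be continuous --- the theorem's conclusion is that it is not --- the image of a real interval under it need not be an arc (a priori it could even be totally disconnected), so the ``in particular'' clause of Theorem~\ref{thm:non-path connectivity} cannot be applied to it; as written this step begs the question. Moreover you never prove that these preimages accumulate on $\cA$ at all. That is the actual content of the paper's argument: by compactness of $\cC(c_0)=\cR(c_0)$ (Corollary~\ref{cor:cpt}), any sequence $b_n=\chi_{c_0}^{-1}(c_n)$ with $c_n\searrow\cAp'$ has limit points $\tilde{b}$; by the Douady--Hubbard lemma on limits of straightenings \cite[\S II.7 Lemma]{MR816367}, $f_{\tilde{c}}$ with $\tilde{c}=\chi_{c_0}(\tilde{b})$ is quasiconformally conjugate to $f_{\cAp'}$, and \cite[Lemma~9.2]{MR2970463} then yields a quasiconformal conjugacy between $f_{\tilde{b}}$ and $f_{b}$, which forces $\tilde{b}\in\cA$. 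Only with ``every limit point lies on $\cA$'' in hand does Corollary~\ref{cor:non-path connectivity} (through the parameter rays whose accumulation sets contain $\cA'$) exclude a single limit and give convergence to a positive-length subarc, after which compactness plus injectivity of $\chi_{c_0}$ turns this into discontinuity of $\chi_{c_0}$ itself. Your proposal flags ``approach from the correct side'' as the delicate point, but the prior issues --- that the preimages approach $\cA$ at all, and that no arc structure may be assumed --- are the ones it cannot close as written.
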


Note that $\cAp'$ is the parameter on the parabolic root arc in
$\partial \HAp$ with critical Ecalle height zero, hence
$\chi_{c_0}(c_0') = \cAp'$ because hybrid conjugacy preserves the critical
Ecalle height. Also, recall that $\chi_{c_0}^{-1}(c') \in \cM^*$ is 
well-defined for $c' \in \R \cap \cM^*=[-2,1/4]$ by
Theorem~\ref{thm:IH-inj} and Corollary~\ref{cor:real surjectivity}.

\begin{proof}
 The Hubbard tree for $f_{\cAp'}$ is an interval 
 $[-1.75, 1.3125]$, which contains the unique period two cycle for
 $f_{\cAp'}$. Hence by assumption, we can apply Corollary~\ref{cor:non-path
 connectivity}.

 On the other hand, assume $c_n \searrow \cAp'$ and
 $b_n=\chi_{c_0}^{-1}(c_n) \to \tilde{b} \in \cC(c_0)$. 
 Let us denote $\tilde{c}=\chi_{c_0}(\tilde{b})$.
 Then $f_{\cAp'}$ and $f_{\tilde{c}}$ are quasiconformally conjugate.
 Hence $f_{b}$ and $f_{\tilde{b}}$ are also quasiconformally
 conjugate by \cite[Lemma~9.2]{MR2970463}.
 Therefore, $\tilde{b}$ lies in $\cA$ and it follows that
 $\chi_{c_0}^{-1}(c) \to \cA$ as $c \searrow \cAp'$.
 By Corollary~\ref{cor:non-path connectivity}, 
 $\chi_{c_0}^{-1}(c)$ has no limit as $c \searrow c_1'$.

 Since both $\cM^*$ and $\cC(c_0)$ are compact and $\chi_{c_0}$ is
 injective, $\chi_{c_0}$ is a homeomorphism onto its image 
 if it is continuous. Therefore $\chi_{c_0}$ is not continuous.
\end{proof}

By the theorem, it is easy to find discontinuous points of $\chi$ and
$\chi^{-1}$:

\begin{cor}
 Under the assumption of the Theorem, we have the following:
 \begin{itemize}
  \item $\chi_{c_0}^{-1}$ is not continuous at $\cAp'=-1.75$.
  \item $\chi_{c_0}$ is not continuous at any point in the parabolic root
	arc with sufficiently small but non-zero Ecalle height. More
	precisely, at any point in the accumulation set of the umbilical
	cord, i.e., any point in 
	\[
	 \bigcap_{\varepsilon>0}
	\overline{\chi_{c_0}^{-1}((\cAp',\cAp'+\varepsilon))},
	\]
	(possibly) except $c_1$.
 \end{itemize}
\end{cor}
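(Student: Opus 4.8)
The plan is to extract both discontinuity statements directly from Theorem~\ref{thm:discont}, using the single extra fact (already invoked in the proof of that theorem) that the straightening map $\chi_{c_0}$ preserves the critical Ecalle height. No new estimate should be needed: the corollary is essentially a formal reading-off of where the non-convergence located by Theorem~\ref{thm:discont} forces a failure of continuity.

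For the first bullet I would argue as follows. By Theorem~\ref{thm:discont}, as $c \searrow \cAp'$ the set $\chi_{c_0}^{-1}(c)$ accumulates on the positive-length arc $\cA' \subset \cA$ and, by Corollary~\ref{cor:non-path connectivity}, has no limit. Yet $\chi_{c_0}^{-1}(\cAp') = b$ is a single point of $\cA$. Hence the one-sided limit of $\chi_{c_0}^{-1}$ at $\cAp'$ does not exist (let alone equal $b$), so $\chi_{c_0}^{-1}$ is discontinuous at $\cAp' = -1.75$. This part requires no new input.

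For the second bullet, fix any point $\beta$ in the accumulation set $\bigcap_{\varepsilon>0}\overline{\chi_{c_0}^{-1}((\cAp',\cAp'+\varepsilon))}$ with non-zero critical Ecalle height. Since $\cR(c_0)$ is compact and contains every $\chi_{c_0}^{-1}((\cAp',\cAp'+\varepsilon))$, the point $\beta$ lies in $\cR(c_0)$, and by Theorem~\ref{thm:discont} it lies on the parabolic root arc $\cA$; in particular $\chi_{c_0}(\beta)$ is defined and lands on the parabolic root arc of $\HAp$, so it too is a parabolic parameter carrying a critical Ecalle height. By definition of the intersection there is a sequence $c_n \searrow \cAp'$ with $b_n := \chi_{c_0}^{-1}(c_n) \to \beta$. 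Were $\chi_{c_0}$ continuous at $\beta$, we would obtain $\chi_{c_0}(\beta) = \lim_n \chi_{c_0}(b_n) = \lim_n c_n = \cAp'$. But $\chi_{c_0}$ preserves the critical Ecalle height, so $\chi_{c_0}(\beta)$ carries the same non-zero height as $\beta$, whereas $\cAp'$ has height zero. This contradiction yields discontinuity of $\chi_{c_0}$ at $\beta$.

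The only point needing care --- and the reason for the ``(possibly) except $c_1$'' proviso --- is the unique parameter of vanishing critical Ecalle height lying in the accumulation set, namely the one matching $\cAp'$ under straightening. There the height comparison produces no contradiction, so the argument above leaves continuity of $\chi_{c_0}$ at that single parameter undecided; every other point is covered, because the root arc is faithfully parametrized by critical Ecalle height and therefore contains at most one point of zero height. I expect the main (though mild) obstacle to be purely bookkeeping: confirming that each relevant $\beta$ genuinely occurs as a limit $\lim_n \chi_{c_0}^{-1}(c_n)$ with $c_n \searrow \cAp'$ (which is exactly membership in the defining intersection), and that $\chi_{c_0}(\beta)$ inherits the critical Ecalle height of $\beta$; both reduce to the properties of $\chi_{c_0}$ recorded earlier rather than to any further analysis.
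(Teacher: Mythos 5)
Your proposal is correct and matches the paper's intent: the paper states this corollary without proof, treating it as an immediate consequence of Theorem~\ref{thm:discont}, and your argument is exactly that formal deduction (non-existence of the one-sided limit for $\chi_{c_0}^{-1}$ at $\cAp'$, and the Ecalle-height comparison at points $\beta\neq b$ of the accumulation set, using the height-preservation fact the paper itself invokes just before the theorem). The only cosmetic remark is that one could replace the height comparison by injectivity of $\chi_{c_0}$ (Theorem~\ref{thm:IH-inj}), since $\chi_{c_0}(\beta)=\cAp'=\chi_{c_0}(b)$ with $\beta\neq b$ is already a contradiction; both finishes are one line and rest on facts established in the paper.
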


We have not proved discontinuity of $\chi_{c_0}$ at
$b=\chi_{c_0}^{-1}(\cAp')$. 
However, decorations attached to the umbilical cord
also accumulate to the parabolic arc, hence
it is likely that $\chi_{c_0}$ is not continuous also at $b$.

Notice that by the symmetry of $\cM^*$, the same statement
holds for $\omega \cAp'$ and $\omega^2 \cAp'$ (equivalently,
we can apply Theorem~\ref{thm:discont} to the other straightening maps
$\omega\chi_{c_0}$ and $\omega^2\chi_{c_0}$ with different internal
markings).

\section{Rigorous numerical computations}
\label{sec:computations}

In this section, we consider the case $c_0=\cAp$ 
and show by rigorous numerical computation
that we can apply Theorem~\ref{thm:discont}, and it follows that
$\chi_{c_*}$ not continuous near $b=\chi_{\cAp}^{-1}(\omega^2 \cAp')$. 
Note that $\chi_{\cAp}^{-1}(\cAp') \in \R$ and the umbilical cord lands there.
Rigorous computation is done with the CAPD library 
(\url{http://capd.ii.uj.edu.pl/}).
The program can be downloaded from the author's web page
(\url{http://www.math.kyoto-u.ac.jp/~inou/}).

In the following, we naturally identify $\R^2 \cong \C$.
Let $R=[-1.73875,-1.73825]\times [0.01555,0.01605] \subset \C$.
\begin{lem}
 Let $U=(-0.3,0.3)^2$.
 For any $c \in R$,
 there exists an anti-holomorphic  quadratic-like mapping
 $g_c = f_c^3|_{U'_c}: U'_c \to U$.
\end{lem}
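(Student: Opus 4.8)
The goal is to prove that for every $c$ in the explicitly given small rectangle $R$, the cubic iterate $f_c^3$ restricts to an anti-holomorphic quadratic-like map on a fixed domain $U=(-0.3,0.3)^2$, with image exactly $U$.

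Let me think about what quadratic-like means here and how to verify it via rigorous numerics.

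A quadratic-like map $g:U'\to U$ requires:
1. $U'\Subset U$ are topological disks
2. $g$ is (anti-)holomorphic and proper of degree 2

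So I need to find $U'_c$ with $f_c^3:U'_c\to U$ proper of degree 2.

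**The structure of $f_c^3$.**

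Note $f_c(z)=\bar z^2+c$ is anti-holomorphic, so $f_c^2$ is holomorphic, and $f_c^3$ is anti-holomorphic. The critical point of $f_c$ is $0$, and the critical points of $f_c^3$ are $0$ together with its preimages. Since we want degree-2 quadratic-like behavior, I want $U$ to contain exactly one critical point of $f_c^3$ — namely $0$ — with the right local degree (degree 2, since at $0$ the map $z\mapsto\bar z^2$ has local degree 2).

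The rectangle $R$ is centered near $\omega^2 c_*$ (the airplane parameter rotated), and at $c=\omega^2 c_*$ the critical point $0$ is periodic of period 3, so $f_c^3(0)=0$ there. Near $R$, $0$ is close to periodic.

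**The plan.**

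To show $f_c^3:U'_c\to U$ is quadratic-like, the standard approach:

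The plan is to exhibit, for every $c\in R$, the preimage $U'_c=(f_c^3)^{-1}(U)\cap(\text{component containing }0)$ and verify it is compactly contained in $U$ and that the restriction is proper of degree two. Concretely I would carry out the following rigorous-numerics steps.

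First, I would verify that $f_c^3(\partial U)$ lies entirely outside $\overline U$ for all $c\in R$; more precisely, that $f_c^3(\partial U)$ surrounds $\overline U$, i.e. winds around it. This guarantees that the component $U'_c$ of $(f_c^3)^{-1}(U)$ containing the critical point is compactly contained in $U$. Using CAPD interval arithmetic, I would cover $\partial U$ (the boundary of the square $(-0.3,0.3)^2$) by finitely many boxes, evaluate the interval enclosure of $f_c^3$ over each box jointly with $c$ ranging over $R$, and check that every output box is disjoint from $\overline U$ and that the image curve has winding number one (or two) around $U$.

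Second, I would count critical points: verify that the only critical point of $f_c^3$ inside $U$ is $0$, and that $0\in U$ maps into $U$ (i.e. $f_c^3(0)\in U$), so that $U'_c$ is nonempty and contains the critical point. Since critical points of $f_c^3$ are $0$ and the $f_c$-preimages of $0$ under the first two iterates, I would enclose those preimages and confirm they lie outside $\overline U$. This pins the local degree at $2$.

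Third, from the boundary behavior (image of $\partial U$ winding around $\overline U$) together with the fact that $f_c^3$ is a proper map of its natural degree onto a neighborhood of $U$, I would conclude that $f_c^3:U'_c\to U$ is proper of degree equal to the number of critical points (with multiplicity) inside, i.e. degree $2$, hence anti-holomorphic quadratic-like. Properness follows because $U'_c$ is a full component of the preimage with $\partial U'_c\subset(f_c^3)^{-1}(\partial U)$; the degree equals the local degree at the unique enclosed critical point by the argument principle.

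**The main obstacle.**

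The hard part will be the rigorous verification that the image of $\partial U$ under $f_c^3$ stays uniformly clear of $\overline U$ and winds around it with the correct index, \emph{simultaneously for all} $c\in R$. This is a two-parameter enclosure ($z$ on a one-dimensional boundary, $c$ in a two-dimensional box) composed through three iterates of a map with quadratic stretching, so interval-arithmetic overestimation (the wrapping effect) can blow up and destroy the needed separation. I would mitigate this by subdividing $\partial U$ and $R$ finely, and possibly by using the mean-value / $C^1$ enclosure features of CAPD rather than naive interval evaluation, to keep the image boxes tight enough to certify disjointness from $\overline U$ and to read off the winding number reliably.
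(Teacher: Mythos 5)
Your proposal is correct in outline, but it follows a genuinely different route from the paper. The paper's program verifies only the weaker condition that $f_c^3(\partial U)$ is disjoint from $\partial U$ (not from all of $\overline{U}$) for every parameter in a box-covered region; at a single parameter this certifies nothing by itself, and the paper converts it into the lemma by a continuation argument: as long as $f_c^3(\partial U)$ never meets $\partial U$, no component of $f_c^{-3}(U)$ can cross $\partial U$ as $c$ varies, so the existence of a component $U'_c\Subset U$ with $f_c^3:U'_c\to U$ proper of degree two is locally constant in $c$, hence constant on each connected component of the verified (cyan) region; one then needs a single known renormalizable parameter in that component (e.g.\ the period-$9$ center $\cApAp$, supplied by Theorem~\ref{thm:IH-onto hyp}) to seed the argument. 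Your scheme is instead a pointwise certification: disjointness of $f_c^3(\partial U)$ from all of $\overline{U}$, plus $f_c^3(0)\in U$, plus the check that the remaining critical points of $f_c^3$ (the six points of $f_c^{-1}(0)\cup f_c^{-2}(0)$) avoid $\overline{U}$, yields $U'_c\Subset U$ and degree two by Riemann--Hurwitz, with no base parameter and no connectivity argument needed. The trade-off: your condition is strictly stronger and could in principle fail even when the renormalization exists (another component of $f_c^{-3}(U)$ may straddle $\partial U$), and it is more exposed to interval overestimation, since you must separate the iterated image of $\partial U$ from a full square rather than from a curve; the paper's test is cheaper and more robust, at the price of the continuation step and the need for a renormalizable base point. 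Two minor slips in your write-up: the winding number to check is $2$ in absolute value, not ``one (or two)'' --- degree one is impossible once the critical point $0$ lies in $U'_c$, since it forces local degree $2$, and anti-holomorphy reverses the sign of the index --- and the winding number plays no role in compact containment; it only duplicates the degree count that your critical-point localization already settles.
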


\begin{figure}
 \centering
 \includegraphics[width=12cm]{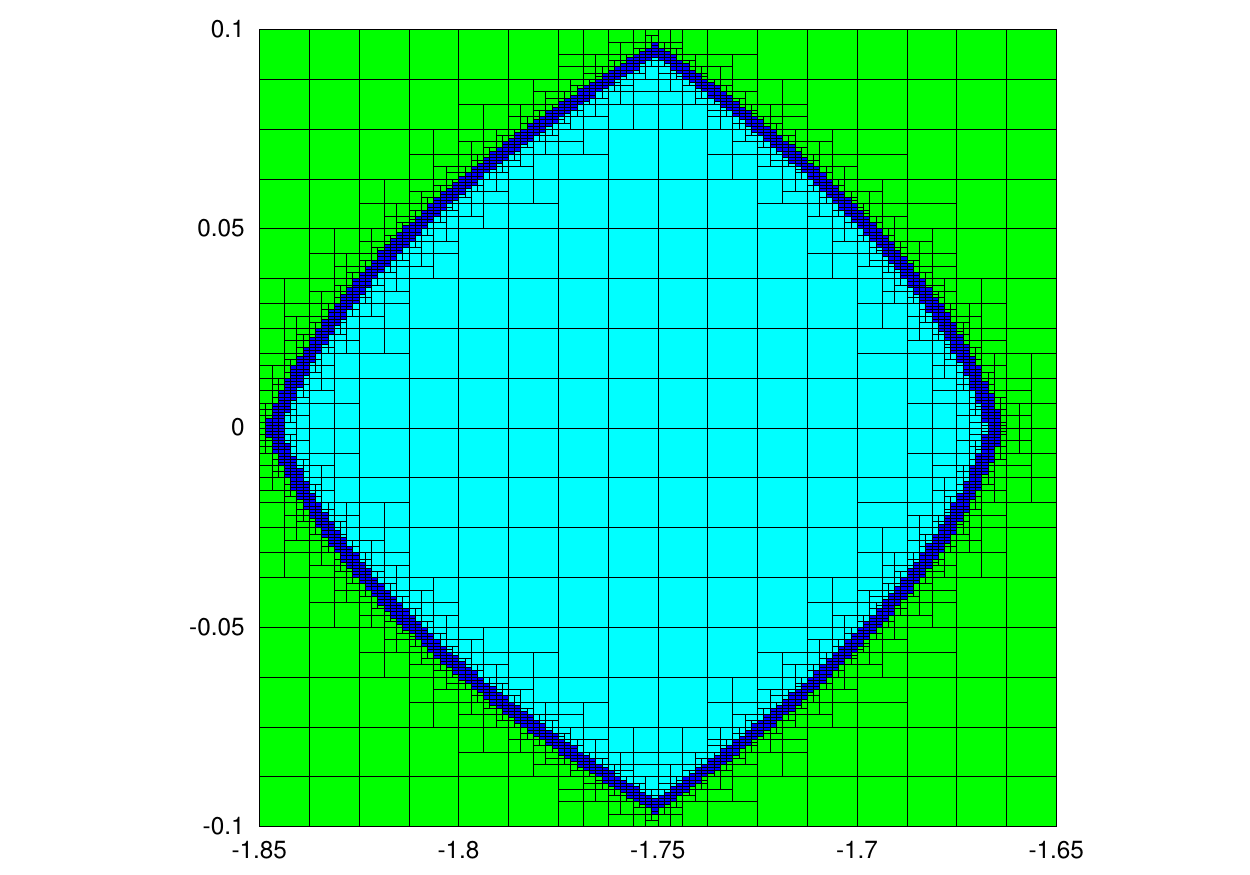}
 \includegraphics[width=12cm]{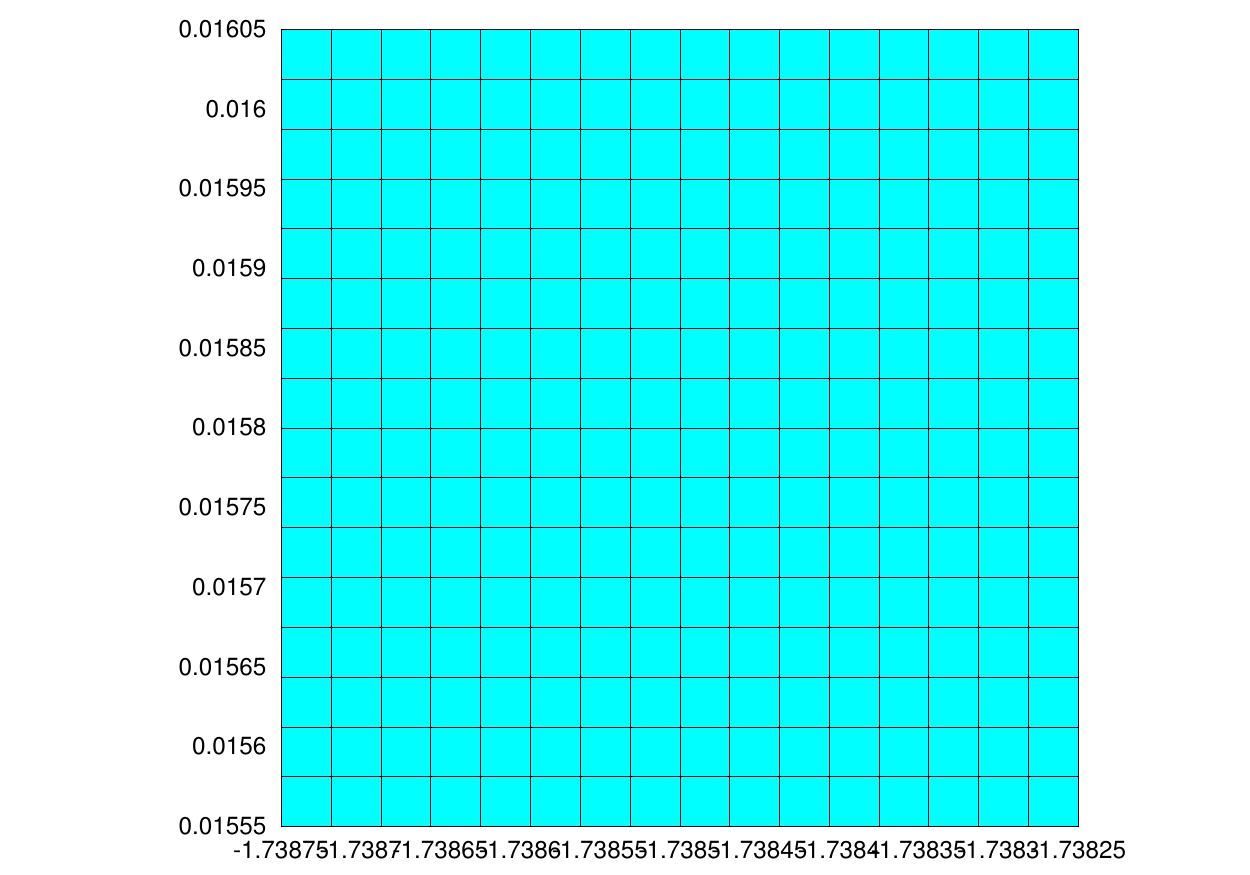}
 \caption{The existence of quadratic-like restriction of $f_c^3:U'_c \to
 U$. The existence is guaranteed in the cyan region.
 The second picture shows that the existence of quadratic-like
 restriction holds for all parameters in that region.}
 \label{fig:ren}
\end{figure}
Figure~\ref{fig:ren} shows the rigorous numerical result
for this lemma.

Precisely speaking, the program checks if the image of $\partial U$ for
a given rectangle $U$ intersects $\partial U$. 
Each box in Figure~\ref{fig:ren} is colored in green
if there is an intersection for any parameters, in cyan if there is no
intersection, in blue if undetermined.
Therefore, if a component $\cU$ of the cyan region contains a
renormalizable parameter,
then for all parameter $c \in \cU$, there exists a component $U_c'
\subset U$ of $f_c^{-3}(U)$ such that $f_c^3:U_c' \to U$ is a proper
map of degree two.

A lower and upper estimate of a hyperbolic component can be done as follows:
The argument principle allows us to verify that there exists a
unique periodic point of period $p$ in a given region (compare
Lemma~\ref{lem:unique per6}). 
Then it suffices to check that for any box $B$ in the region,
either $f^p(B)$ does not intersect $B$ or
$(f^p)'(B)$ is contained in the unit disk.

\begin{lem}
 There exists a unique hyperbolic component $\cH_9$ of period 9
 contained in $R \cap \cR(c_0)$. 
\end{lem}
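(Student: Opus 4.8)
The plan is to establish two things by rigorous computation: first, the \emph{existence} of a period-$9$ hyperbolic component inside $R \cap \cR(c_0)$, and second, its \emph{uniqueness}. Since we already know from the preceding lemma that $g_c = f_c^3|_{U'_c}:U'_c \to U$ is a quadratic-like mapping for every $c \in R$, a period-$9$ component of $f_c$ inside $\cR(c_0)$ corresponds exactly to a period-$3$ hyperbolic component of the renormalization $g_c$ (recall $c_0 = \cAp$ has renormalization period $3$, so $9 = 3 \times 3$). The strategy is to locate the center and verify hyperbolicity by the two-step recipe stated just before the lemma in the excerpt: (i) use the argument principle to count zeros, and (ii) on each box check the derivative condition.

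For the \textbf{existence and uniqueness of the periodic point}, I would apply the argument principle to the map $f_c^9(z) - z$ (or equivalently work with $g_c^3$, tracking the second-iterate multiplier since the period $9$ is odd). Concretely, for a representative parameter $c \in R$ one computes rigorously, via interval arithmetic in the CAPD library, the winding number of $w \mapsto f_c^9(w) - w$ around $0$ as $w$ traverses $\partial U$. A winding number equal to the expected count certifies that there is exactly one period-$9$ cycle whose characteristic point lies in $U$. This is the analogue of Lemma~\ref{lem:unique per6} referenced in the text, and the same enclosure machinery applies verbatim.

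For \textbf{hyperbolicity}, once the unique cycle is isolated, I would partition the relevant parameter region $R$ (or the sub-region where the center is enclosed) into a grid of boxes $B$, and for each $B$ verify with rigorous arithmetic that either $f_c^9(B) \cap B = \emptyset$ (so no period-$9$ point sits in $B$) or the multiplier $(f_c^{18})'(B)$ — the derivative of the second iterate, which by Lemma on odd-period multipliers is real non-negative — is enclosed in $\D$. Since the component is superattracting at its center, the critical point $0$ is periodic there, and the derivative estimate will confirm attraction throughout a neighborhood, pinning down that the enclosed cycle is genuinely attracting and hence the center lies in a period-$9$ hyperbolic component. Combining the unique-cycle count with the attracting-multiplier check gives a \emph{single} such component.

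The \textbf{main obstacle} I anticipate is controlling the composition $f_c^9$ rigorously over the full parameter rectangle $R$ without the interval enclosures blowing up: nine iterations of $\bar z^2 + c$ with $c$ ranging over a box produces severe overestimation (the wrapping effect), so naive interval evaluation of $f_c^9(\partial U)$ may fail to separate from $0$ or may not fit inside $\D$. Overcoming this requires subdividing both the spatial boundary $\partial U$ and the parameter box $R$ finely, and likely using the CAPD $C^1$ enclosures to propagate derivatives tightly rather than bounding them crudely. A secondary subtlety is the anti-holomorphic nature of $f_c$: one must consistently work with the holomorphic second iterate $f_c^2$ (so that the argument principle and the notion of multiplier apply in the standard complex-analytic form) and keep careful track of conjugations when assembling $f_c^9 = f_c \circ f_c^8$. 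Provided the subdivision is fine enough that the cyan certification already shown in Figure~\ref{fig:ren} persists, the counting and derivative certificates should close, establishing both existence and uniqueness of $\cH_9$.
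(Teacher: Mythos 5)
Your proposal is built almost entirely in the dynamical plane (argument principle for the cycle, multiplier enclosures), and this leaves the actual content of the lemma unproved: the uniqueness being asserted is uniqueness of the \emph{hyperbolic component} in parameter space, not uniqueness of the period-$9$ cycle for a given parameter. Even if you certify that every $c \in R$ has exactly one period-$9$ cycle in $U$ and that this cycle is attracting at the parameters you test, the set of parameters in $R$ where the cycle is attracting could a priori be disconnected, and each connected piece would be a distinct period-$9$ hyperbolic component; nothing in your checks rules this out. The paper closes exactly this gap by a \emph{parameter-space} computation (Figure~\ref{fig:per9}): it rigorously certifies that the locus of $c$ for which $f_c^9$ has a parabolic point of multiplier one --- a locus which, by the structure of odd-period components (boundaries consist of parabolic arcs and cusps), contains the boundary of every period-$9$ hyperbolic component --- avoids the cyan region. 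Consequently every such boundary is trapped in the thin blue region, and it then suffices to check one parameter in the bounded cyan component (an attracting period-$9$ cycle exists, so that whole component lies in $\cH_9$) and one parameter in the unbounded cyan region (no attracting period-$9$ cycle, so that region meets no period-$9$ component). This simultaneously localizes $\cH_9$ between the bounded cyan region and its union with the blue region, and yields uniqueness. Your proposal contains no analogue of this parabolic-locus computation, which is the heart of the paper's argument.

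A second, smaller but genuine error: you cannot apply the argument principle to $w \mapsto f_c^9(w)-w$, because $f_c^9$ is anti-holomorphic (the period is odd). Fixed points of an orientation-reversing map need not have positive index --- indeed Lemma~\ref{lem:num fixed pts} shows the raw count of fixed points of $f_c$ itself jumps between $4$ and $2$ --- so the winding number along $\partial U$ does not count cycles. This is precisely why the paper's Lemma~\ref{lem:unique per6} works with the holomorphic even iterate $f_c^6$ and the integral $\int_{\partial R}\bigl((f_c^6)'(z)-1\bigr)/\bigl(f_c^6(z)-z\bigr)\,dz$. Your closing remark that one ``must consistently work with the holomorphic second iterate'' is therefore not a secondary subtlety to be absorbed later: as stated, your counting step is invalid. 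Note also that fixed points of $f_c^9$ in $U$ include the period-$1$ and period-$3$ points of the renormalization, so even after passing to a holomorphic iterate the ``expected count'' is not one unless you first cut down to a region excluding the lower-period points, as the paper does by restricting to $[0,0.08]^2$ in Lemma~\ref{lem:unique per6}.
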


\begin{figure}
 \centering
 \includegraphics[width=12cm]{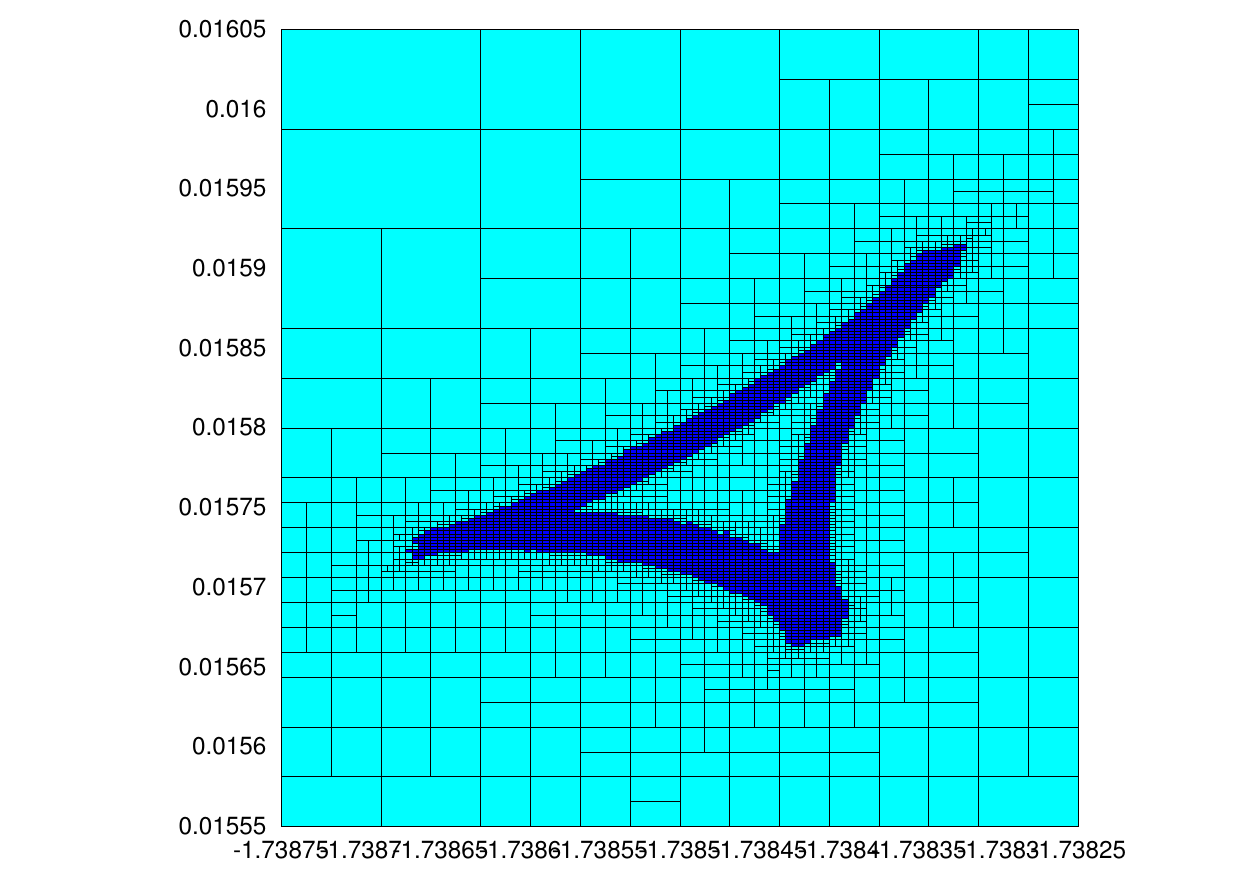}
 \caption{A period 9 hyperbolic component. Parabolic arcs are contained
 in the blue region.}
 \label{fig:per9}
\end{figure}

Figure~\ref{fig:per9} shows the position of $\cH_9$.
More precisely, there is no parabolic fixed point of multiplier one for
$f_c^9$ in the cyan region.
Therefore, it follows from the figure that the hyperbolic component $\cH_9$
is contained in the union of blue region and the bounded component of
the complement.

Precisely speaking, one also has to check that there is a period 9
attracting cycle for some parameter in the bounded cyan region, 
and there is no period 9 attracting cycle for some parameter in the
unbounded cyan region (and this is done rigorously for $\cH_9$).

\begin{lem}
 \label{lem:unique per6}
 For any $c \in R$,
 there exists a unique fixed point $x_c$ of $f_c^6$ 
 in $[0,0.08]^2$ contained in the filled Julia set of $g_c$.
\end{lem}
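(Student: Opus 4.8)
The plan is to treat the fixed-point equation for $f_c^6$ as a purely holomorphic problem in the variable $z$ and to count its solutions in the box $B=[0,0.08]^2$ by the argument principle. Since $f_c^2(z)=(z^2+\bar c)^2+c$ is a polynomial in $z$, the sixth iterate $f_c^6=(f_c^2)^3$ is holomorphic in $z$, so $F_c(z)=f_c^6(z)-z$ is a polynomial of degree $64$ in $z$ whose coefficients depend polynomially on $c$ and $\bar c$. Fixed points of $f_c^6$ in $B$ are precisely the zeros of $F_c$ in $B$, and so it suffices to show that for every $c\in R$ the polynomial $F_c$ has exactly one zero in $B$.

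First I would verify, by rigorous interval evaluation with CAPD, that $F_c$ does not vanish on $\partial B$ for any $c\in R$: subdivide $\partial B$ into small sub-arcs, enclose each product box (sub-arc)$\times R$, evaluate $F_c$ there, and check that $0$ is not contained in the resulting enclosure. Granting this non-vanishing on the boundary, I would then compute the winding number of $w=F_c(z)$ about $0$ as $z$ traverses $\partial B$ once, by accumulating the certified variation of $\arg F_c$ along the sub-arcs; because $F_c$ is holomorphic in $z$, this winding number equals the number of zeros of $F_c$ in $B$ counted with multiplicity. The goal is to certify that this number equals $1$ for all $c\in R$ at once, which gives, for each $c\in R$, a unique (hence simple) fixed point $x_c\in B$ of $f_c^6$. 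If a sharper localization is desired, an interval-Newton step started from $B$ then yields a much smaller box $B_0\ni x_c$.

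It remains to show $x_c\in K(g_c)$, where $g_c=f_c^3|_{U'_c}\colon U'_c\to U$ is the anti-holomorphic quadratic-like mapping constructed above with $U=(-0.3,0.3)^2$. Here I would rigorously verify $f_c^3(B)\subset U$; since $B$ is connected, lies in $U$, and contains the critical point $0\in U'_c$, this forces $B\subset U'_c$, so in particular $x_c\in U'_c$. Next I would enclose the second cycle point $y_c=f_c^3(x_c)=g_c(x_c)$ and check that $y_c\in U'_c$ as well (note $f_c^3(y_c)=f_c^6(x_c)=x_c\in B\subset U$). Then the whole $g_c$-orbit of $x_c$ is the periodic orbit $\{x_c,y_c\}\subset U'_c$, which never leaves $U'_c$, so $x_c\in K(g_c)$ by definition of the filled Julia set.

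The step I expect to be the main obstacle is the uniform winding-number computation: $F_c$ is a degree-$64$ iterate evaluated over a two-real-parameter family of $c$'s, so a naive interval composition suffers badly from the wrapping effect and may fail either to exclude $0$ on $\partial B$ or to determine the total argument increment. The delicate point is to subdivide $\partial B\times R$ finely enough, and to evaluate $f_c^6$ as a controlled composition rather than through its expanded coefficients, so that the boundary enclosures are tight enough to certify both non-vanishing and a total argument variation of exactly $2\pi$. By contrast, the membership $x_c\in K(g_c)$ is comparatively routine once the two-cycle has been enclosed.
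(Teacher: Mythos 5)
Your proposal is correct and is essentially the paper's own argument: the paper likewise counts the fixed points of $f_c^6$ in the box $B=[0,0.08]^2$ by the argument principle, rigorously enclosing $\int_{\partial B}\bigl((f_c^6)'(z)-1\bigr)\big/\bigl(f_c^6(z)-z\bigr)\,dz$ with interval arithmetic and checking that the (convex) enclosure contains $2\pi i$ but neither $0$ nor $4\pi i$ --- merely a different implementation of the winding-number certification you describe. Your closing verification that $x_c\in K(g_c)$ via the two-cycle $\{x_c,y_c\}\subset U'_c$ makes explicit a step the paper leaves to the program.
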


\begin{figure}
 \centering
 \includegraphics[width=12cm]{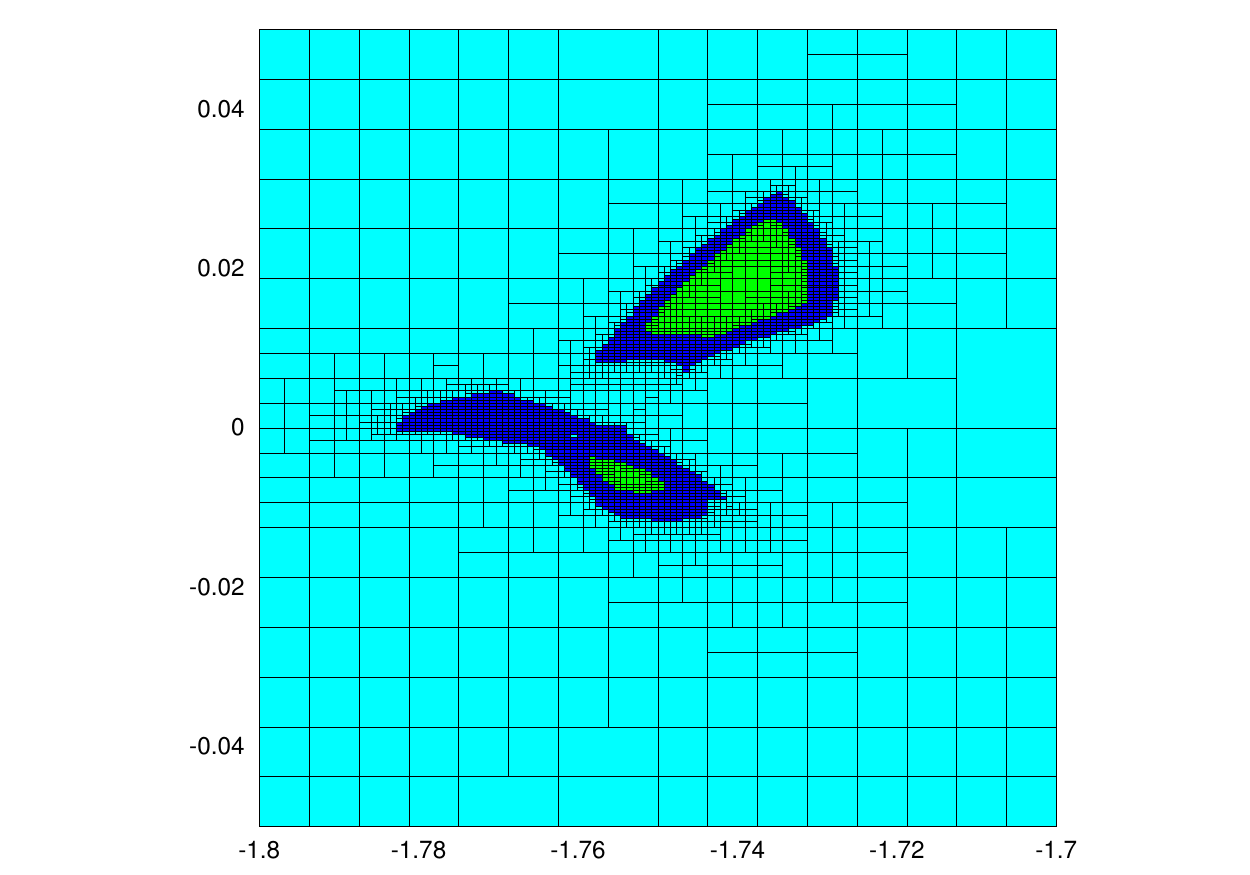}
 \includegraphics[width=12cm]{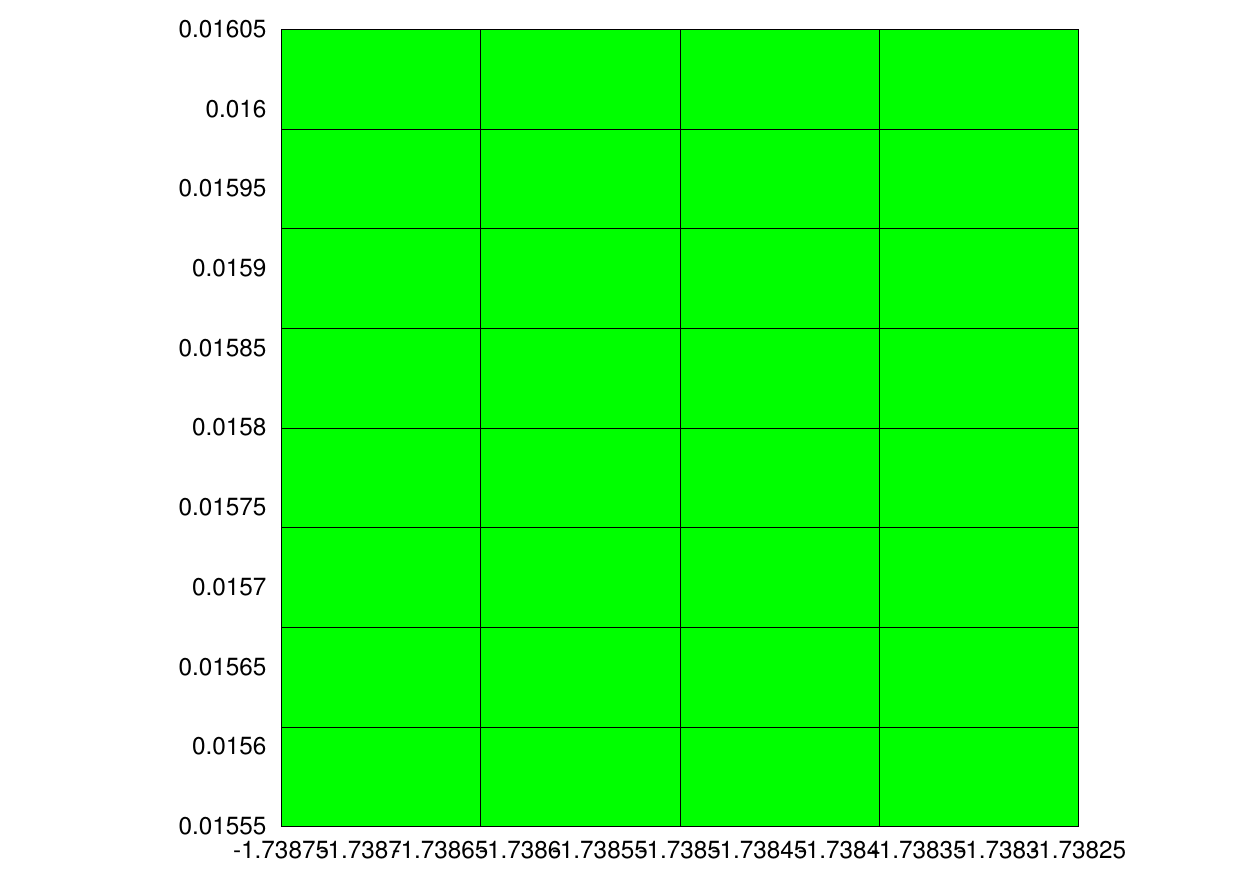}
 \caption{The region existing a unique fixed point in $[0,0.08]^2$.
 The unique existence holds in the green region,
 does not hold in the cyan region, and undetermined in the blue region.
 The second picture shows that the unique existence holds for all
 parameters in that region.} 
 \label{fig:uniqueperpt}
\end{figure}

Figure~\ref{fig:uniqueperpt} shows the rigorous numerical result for
this lemma.
The number of fixed points in a given region $R$ can be checked
by the argument principle.
Namely, it suffices to check if the rigorously estimated value of
\[
 \int_{\partial R} \frac{(f_c^6)'(z)-1}{f_c^6(z)-z}dz
\] 
contains $2\pi i$
and does not contain $0$ and $4\pi i$ (note that the computational
result is given by a box, hence it is convex).

As in the case of checking renormalizability for a specific parameter,
it is easy to check a condition on a multiplier of the unique periodic
point whose existence is guaranteed by the above lemma:
\begin{lem}
 \label{lem:disjoint}
 Let $C = \{c \in R;\ x_c \mbox{ has real multiplier for }f_c\}$.
 Then $C \cap \partial \cH_9 = \emptyset$.
\end{lem}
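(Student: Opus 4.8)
The plan is to establish this disjointness by rigorous interval arithmetic, in the same spirit as the preceding lemmas: I would localize the real-multiplier locus $C$ and the parabolic boundary $\partial \cH_9$ into two regions of $R$ and then certify that these regions do not meet.

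First I would enclose the multiplier of $x_c$. By Lemma~\ref{lem:unique per6}, for each $c \in R$ there is a unique fixed point $x_c$ of $f_c^6$ in $[0,0.08]^2$; subdividing $R$ into small boxes $B$, I would run an interval Newton (Krawczyk) operator to produce, for each $B$, a rigorous enclosure of $x_c$ valid uniformly for $c \in B$. Since $f_c^6 = (f_c^2)^3$ is holomorphic, the multiplier $\mu(c) = (f_c^6)'(x_c)$ is well defined, and I would compute an interval enclosure of $\im \mu(c)$ over $c \in B$ by differentiating along the orbit. Because a real number is characterized by vanishing imaginary part, a box $B$ can meet $C$ only when this enclosure of $\im \mu$ contains $0$; discarding all boxes for which it does not, the remaining boxes form a region $N$ with $C \subseteq N$.

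Next I would use the period-$9$ computation to control $\partial \cH_9$. Its boundary consists of parabolic arcs on which $f_c^9$ has a fixed point of multiplier one, and by the period-$9$ lemma (Figure~\ref{fig:per9}) no such parabolic point exists in the cyan region; hence $\partial \cH_9$ avoids the cyan region entirely. It therefore suffices to verify, box by box, that every box of $N$ lies in the cyan region, i.e.\ admits no period-$9$ parabolic point --- this is exactly the parabolicity test already used for $\cH_9$. Running a single sweep over the subdivision of $R$ and checking that no box simultaneously admits a real multiplier for $x_c$ and a period-$9$ parabolic point then yields $N \subseteq \text{cyan}$, and since $C \subseteq N$ we obtain $C \cap \partial \cH_9 = \emptyset$.

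The main obstacle is numerical sharpness rather than structure: $R$ has side length about $5 \times 10^{-4}$ and the loci $C$ and $\partial \cH_9$ run close together inside it, so the enclosures of the order-$6$ multiplier and of the order-$9$ parabolicity condition must be tight enough to separate them. Naive interval evaluation overestimates severely, since the wrapping effect compounds over the iterates, so I would rely on mean-value/Krawczyk enclosures together with a fine (adaptive) subdivision to push the overestimation below the gap between the two curves; this is precisely the kind of certified estimate the CAPD library provides.
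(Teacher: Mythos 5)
Your proposal is correct and is essentially the paper's own argument: the paper likewise encloses $C$ (by bounding the multiplier of the unique fixed point $x_c$ guaranteed by Lemma~\ref{lem:unique per6} over a subdivision of $R$) and the parabolic arcs of $\partial \cH_9$ (by the period-$9$ multiplier-one test behind Figure~\ref{fig:per9}) in rigorously computed interval regions, and then verifies with the CAPD library that the two enclosures are disjoint (Figure~\ref{fig:nobabytricorn}). Your choice of a Krawczyk/interval-Newton operator to localize $x_c$, where the paper uses the argument principle, is an implementation detail rather than a different route.
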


\begin{figure}
 \centering
 \includegraphics[width=12cm]{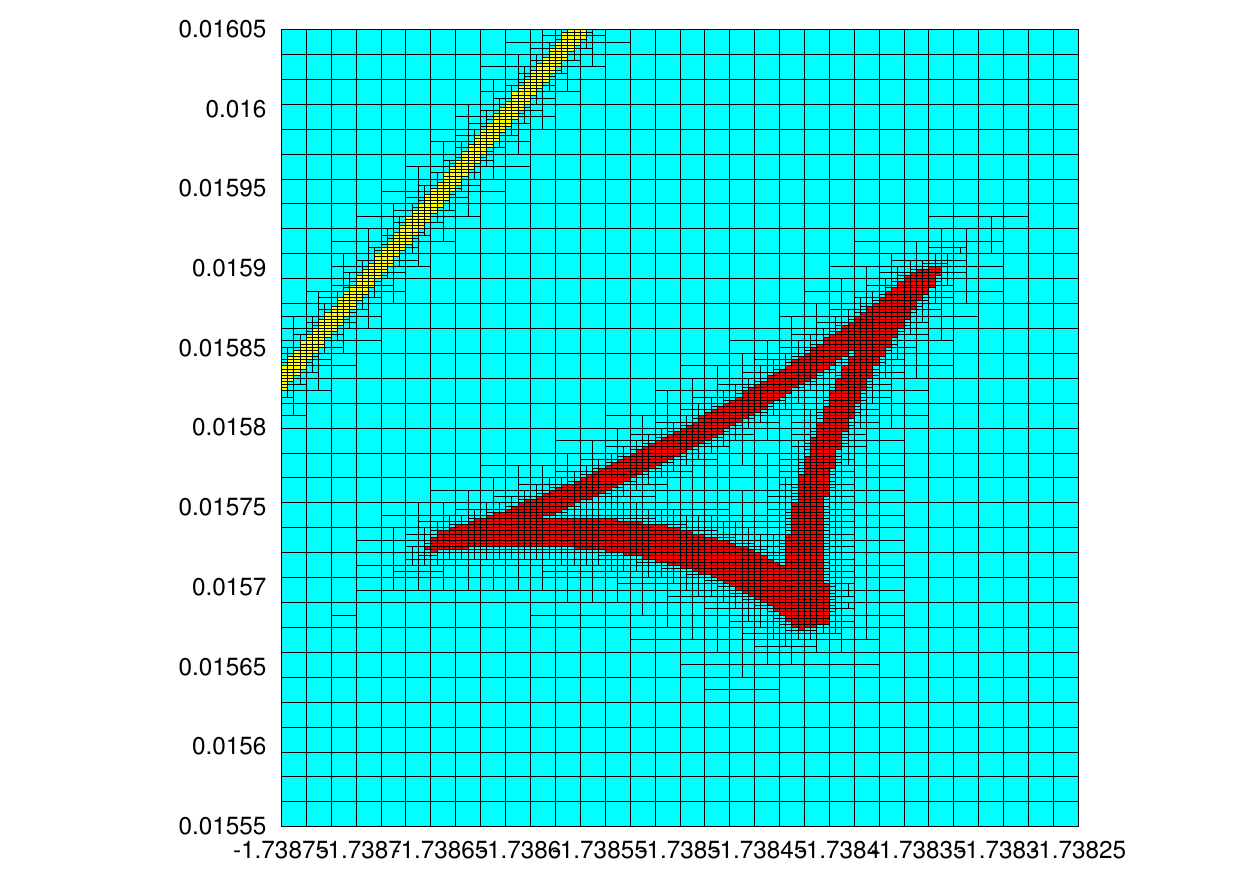}
 \caption{The real multiplier locus $C$ (contained in the yellow
 region) and the parabolic arcs $\partial \cH_9$ (contained in the red
 region).} 
 \label{fig:nobabytricorn}
\end{figure}

Figure~\ref{fig:nobabytricorn} shows that $C$ and $\partial \cH_9$ do
not intersect (compare Figure~\ref{fig:H_**}).

In particular,
the assumption of Theorem~\ref{thm:discont} is satisfied 
on the parabolic root arc $\cA$ in $\partial \cH_9$.
Therefore, we have the following.
\begin{cor}
 The straightening map $\chi_{\cAp}:\cC(\cAp) \to \cM^*$ is not
 continuous.

 More precisely, let $b$ be the unique parameter on the root arc $\cA
 \subset \partial \cH_9$ of critical Ecalle height zero.
 Then there exists a subarc $\cA' \subset \cA$ containing $b$ of
 positive length such that $\chi_{\cAp}$ is not continuous at any point
 in $\cA' \setminus \{b\}$, and $\chi_{\cAp}^{-1}$ is not continuous
 at $\chi_{\cAp}(b)=\omega^2 \cAp'$.
\end{cor}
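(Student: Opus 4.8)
The plan is to check the single hypothesis of Theorem~\ref{thm:discont} for $c_0=\cAp$ and then read off the conclusion from that theorem and the corollary following it. Since the renormalization period for $\cAp$ is $3$, a period-$3$ hyperbolic component of $\cM^*$ pulls back under $\chi_{\cAp}$ to a period-$9$ component of $\cC(\cAp)=\cR(\cAp)$; in particular the rotated airplane $\omega^2\HAp$ pulls back to the unique period-$9$ component located in $R$, which we identify with $\cH_9$. Let $b$ be the parameter of critical Ecalle height zero on the root arc $\cA\subset\partial\cH_9$, so that $\chi_{\cAp}(b)=\omega^2\cAp'$. Using $\omega^3=1$ we get $(\omega\chi_{\cAp})(b)=\cAp'$ and $\cH_9=(\omega\chi_{\cAp})^{-1}(\HAp)$, so by the symmetry remark after Theorem~\ref{thm:discont} it is enough to apply that theorem to the straightening map $\omega\chi_{\cAp}$. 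Its hypothesis concerns only the intrinsic renormalization $g_b$, and so is independent of the internal marking.

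I would then extract the renormalization data from the computational lemmas of Section~\ref{sec:computations}. As $b\in R$ (the whole root arc $\cA$ lies in $R$, as the period-$9$ computation shows), the quadratic-like restriction lemma produces the anti-holomorphic map $g_b=f_b^3|_{U'_b}:U'_b\to U$. Because $\chi_{\cAp}(b)=\omega^2\cAp'$ sits at the root of the rotated airplane, which is not in the main hyperbolic component, the fixed-point count for renormalizations (Section~\ref{sec:baby-tricorn-like}) shows $g_b$ has exactly one period-two cycle. A period-two cycle of the anti-holomorphic $g_b$ is a period-six cycle of $f_b$, and its multiplier, computed from the holomorphic map $g_b^2=f_b^6$, equals $(f_b^6)'(x)$ for any point $x$ of the cycle. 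By Lemma~\ref{lem:unique per6} the point of this cycle lying in $[0,0.08]^2\cap K(g_b)$ is exactly the fixed point $x_b$ of $f_b^6$ provided there, so the multiplier of the period-two cycle is $(f_b^6)'(x_b)$.

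The decisive step is Lemma~\ref{lem:disjoint}: the real-multiplier locus $C$ of $x_c$ satisfies $C\cap\partial\cH_9=\emptyset$. Since $b\in\cA\subset\partial\cH_9$, we get $b\notin C$, i.e.\ $(f_b^6)'(x_b)$ is non-real, so the unique period-two cycle of $g_b$ has non-real multiplier. This is precisely the hypothesis of Theorem~\ref{thm:discont}. Applying that theorem to $\omega\chi_{\cAp}$ shows that $(\omega\chi_{\cAp})^{-1}(c')$ accumulates on an arc $\cA'\subset\cA$ of positive length as $c'\searrow\cAp'$; equivalently $\chi_{\cAp}^{-1}(c)$ accumulates on $\cA'$ as $c\to\omega^2\cAp'$ inside $\omega^2\R$, so $\chi_{\cAp}$ cannot be continuous. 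The corollary following Theorem~\ref{thm:discont} then supplies the sharpened statement: $\chi_{\cAp}$ is discontinuous at every point of $\cA'\setminus\{b\}$ and $\chi_{\cAp}^{-1}$ is discontinuous at $\omega^2\cAp'=\chi_{\cAp}(b)$.

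I expect the main obstacle to be bookkeeping rather than any genuine difficulty, since Theorem~\ref{thm:discont} does the analytic work. The delicate points are three. First, one must be sure the rigorously located fixed point $x_c$ of $f_c^6$ really belongs to the period-two cycle of $g_c$ rather than being one of the two $g_c$-fixed points (which are period-three points of $f_c$); the containment $x_c\in K(g_c)$ in Lemma~\ref{lem:unique per6} places it among the four fixed points of $g_c^2=f_c^6$ in $K(g_c)$, and its non-real multiplier confirms it is a period-two point, since a period-three point would have real multiplier by the odd-period multiplier lemma. Second, one must verify that the entire root arc $\cA$, and hence $b$, lies inside the rectangle $R$ on which every computational lemma is valid. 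Third, one should record explicitly that the quantity tested in Lemma~\ref{lem:disjoint}, namely the holomorphic multiplier $(f_b^6)'(x_b)$ of $g_b^2$, is exactly the invariant whose non-reality Theorem~\ref{thm:discont} demands; note that this multiplier is a quasiconformal and not a conformal invariant, which is why it can be non-real even though $g_b$ straightens to the real airplane-root map $f_{\cAp'}$, whose own period-two cycle has real multiplier.
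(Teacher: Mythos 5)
Your proposal is correct and follows essentially the same route as the paper: verify the hypothesis of Theorem~\ref{thm:discont} (applied, via the symmetry remark, to the rotated straightening map $\omega\chi_{\cAp}$) using the computational lemmas of Section~\ref{sec:computations}, then read off the conclusion from that theorem and its corollary. Your bookkeeping points—that $\cA\subset R$, that the hypothesis is marking-independent, and that the non-real multiplier of $x_b$ itself forces $x_b$ to lie on the period-two cycle of $g_b$ rather than be a $g_b$-fixed point (which would have real multiplier by the odd-period lemma)—are details the paper leaves implicit, and you supply them correctly.
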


By this corollary, it follows that at least ``half'' of baby
tricorn-like sets are not baby tricorns.
\begin{cor}
 For any baby tricorn-like set $\cC(c_0)$, let
 $c_1=\chi_{c_0}^{-1}(\cAp)$.
 Then either $\cC(c_0)$ or $\cC(c_1)$ is not a baby tricorn.
\end{cor}

\begin{proof}
 Let $L=\chi_{c_1}^{-1}(\omega\cdot(-1.75,0])$. 
 Then $L$ is the ``umbilical cord'' for the hyperbolic component $\cH$
 containing $c_2=\chi_{c_1}^{-1}(\omega \cAp)$ where
 $\omega=\frac{-1+\sqrt{3}i}{2}$ is a cubic root of unity.
 If it converge to a point, then $\chi_{c_0}$ is not a homeomorphism
 because the corresponding ``umbilical cord'' by $\chi_{c_0}$ does not
 converge by the previous corollary.

 Otherwise, $\chi_{c_1}$ is not a homeomorphism because
 the corresponding umbilical cord by $\chi_{c_1}$ is a real segment,
 which converges to $-1.75$.
\end{proof}

\appendix

\section{Straightening maps}
\label{sec:app-streightening}

Here we briefly describe how the notions in this paper corresponds to 
the notions in \cite{MR2970463} and how the results there
are applied to our case.

Let $g_c = f_c^2$.
This gives a natural embedding of the tricorn family $\C \cong
\{f_c\}_{c \in \C}$ into the family of monic centered quartic
polynomials $\poly(4)$ with 
connectedness locus $\cC(4)$.
For $c_0$ as above, $g_{c_0}$ is a hyperbolic post-critically hyperbolic
quartic polynomial with rational lamination
$\lambda(g_{c_0})=\lambda_0$.
The combinatorial renormalization locus $\cC(\lambda_0) = \{g \in
\poly(4);\ \lambda(g) \supset \lambda_0\}$ and 
$\cR(\lambda_0)=\{g \in \cC(\lambda_0);\ \lambda_0$-renormalizable$\}$
satisfies $\cC(c_0) = \cC(\lambda_0) \cap \C$ and
$\cR(c_0)=\cR(\lambda_0) \cap \C$.
Furthermore, the straightening map $\chi_{\lambda_0}:\cR(\lambda_0) \to
\cC(T(\lambda_0))$, where $\cC(T(\lambda_0))$ is the fiberwise
connectedness locus of the family of monic centered polynomial map over
the mapping schema $T(\lambda_0)$ of $\lambda_0$.

We describe what $\cC(T(\lambda_0))$ is in our case.
Let $c \in \cR(c_0)$.
When the renormalization period $n$ is even, 
the $c_0$-renormalization $f_c^n:U' \to U$ yields two
quadratic-like maps for $g_c$, i.e., $g_c^{n/2}:U' \to U$ and
$g_c^{n/2}:f_c(U') \to f_c(U)$ (by shrinking $U'$ and $U$ if
necessary). 
Observe that the second is anti-holomorphically conjugate to the first
by $f_c^{n-1}$ near the filled Julia set.
Therefore, as a $\lambda_0$-renormalization for $g_c$, we have 
two quadratic-like maps which are anti-holomorphically equivalent.
Therefore, the straightening of those are $P_{c'}$ and $P_{\bar{c'}}$,
where $c'=\chi_{c_0}(c)$.
In this case, $\cC(T(\lambda_0)) \cong \cM \times \cM$ 
and $\chi_{\lambda_0}(g_c) = (c',\bar{c'})$.
In other words, $\chi_{c_0}$ is the following composition:
\[
 \cR(c_0) \hookrightarrow \cR(\lambda_0)
 \xrightarrow{\chi_{\lambda_0}} \cC(T(\lambda_0))
 \supset \cC(T(\lambda_0)) \cap \{(c',\bar{c'});\ c' \in \C\}  
 \xrightarrow{\cong} \cM.
\]

When $n$ is odd, then $g_c^n:U' \to g_c^n(U')=f_c^n(U)$ is a
quartic-like map (similarly by shrinking $U'$ and $U$ if necessary).
More precisely, $g_c^n$ is divided into a composition of two degree two
maps, namely, 
$g_c^{\frac{n-1}{2}}:U' \to f_c^{n-1}(U')$ and 
$g_c^{\frac{n+1}{2}}:f_c^{n-1}(U') \to g_c^n(U')$.
Hence it follows that the straightening is a biquadratic polynomial
(i.e., a composition of two quadratic polynomials).
By definition, $\cC(T(\lambda_0))$ is the fiberwise connectedness locus
of the biquadratic family
\begin{align*}
 \poly(2\times 2) &=
 \{P:\{0,1\}\times \C \circlearrowleft;\ P(k,z) =
(1-k,P_k(z)),\ P_0,P_1 \in \poly(2)\} \\
 &\cong \{(Q_{c_0},Q_{c_1});\ c_0,c_1 \in \C\} \cong \C^2.
\end{align*}
We also identify $P \in \poly(2\times 2)$ with a quartic polynomial
$R_P(z)=Q_{c_1}\circ Q_{c_0}$ 
(the second iterate restricted to the first complex plane $\{0\} \times \C$).
Let $P =(P_0,P_1) = \chi_{\lambda_0}(g_c) \in \cC(T(\lambda_0))$.
Observe that
\[
 g_c^n:U' \to g_c^n(U'),\quad g_c^n:f_c^{n-1}(U') \to f_c^{2n-1}(U')
\]
are anti-holomorphically conjugate by $f_c^{n-1}$.
Hence it follows that their straightenings are anti-holomorphically
conjugate,
more precisely, we have $(P_1,P_0)= \bar{P}=(\bar{P_0},\bar{P_1})$.
Therefore, $P \in \{(P_0,\bar{P_0})\in \poly(2\times 2)\} \cong \C$.
When $P_0=Q_{\bar{c'}}$, then 
$P^2(0,z) = (0,(z^2+\bar{c'})^2+c')=(0,f_{c'}^2(z))$.
In other words, by the identification $(0,z) \sim (1,\bar{z})$ in
$\{0,1\} \times \C$, $P$ is
semiconjugate to $f_{c'}$ where $c'=\chi_{c_0}(c)$. Therefore in this
case, $\chi_{c_0}$ is the following composition:
\[
 \cR(c_0) \hookrightarrow \cR(\lambda_0) 
 \xrightarrow{\chi_{\lambda_0}} \cC(T(\lambda_0)) 
 \supset \cC(T(\lambda_0)) \cap \{(c',\bar{c'});\ c' \in \C\} 
 \cong \cM^*.
\]

With this in mind, Theorems~\ref{thm:IH-inj}, \ref{thm:IH-onto hyp},
\ref{thm:IH-cpt} and \ref{thm:IH-onto} 
are direct consequences of Theorem B, C, D and F in \cite{MR2970463}
respectively.

\section{Hubbard tree and renormalizability}
\label{sec:app-tree-renorm}

Here we discuss (combinatorial) renormalizability and existence
of an invariant Hubbard subtree.
Before describing the result, we recall 
the notion of \emph{fiber} \cite{MR2970463}:

\begin{defn}
 Let $c_0$ be a center and $\lambda=\lambda(f_{c_0})$.
 For $f \in \cC(c_0)$ and a $\lambda$-unlinked class $L$,
 define the \emph{fiber} $K_f(L)$ as follows:
 \[
  K_f(L) = K(f) \cap \bigcup_{\theta \sim_\lambda \theta',\ \theta\ne
 \theta'} \overline{\sector(\theta,\theta';L)},
 \]
 where $\sector(\theta,\theta';L)$ is a component
 of $\C \setminus (\overline{R_f(\theta) \cup R_f(\theta')})$ containing
 $R_f(\theta'')$ for $\theta'' \in L$.
\end{defn}

\begin{thm}
 \label{thm:tree-comb_renorm}
 Let $c \in \cM^*$ be principal parabolic and let $H \subset K(f_c)$ be
 its parabolic Hubbard tree.

 Assume there exist $q>0$ and
 a non-trivial subtree $H' \subset H$ such that
 \begin{itemize}
  \item $(f^j(H');\ j=1,\dots,q)$ are mutually disjoint.
  \item $0 \in H'$.
  \item For each $0 \le i, j< q$ with $i \ne j$,
	there exists $\theta, \theta' \in
	\Q/\Z$ such that $R_f(\theta)$ and $R_f(\theta')$ land at the
	same point and the closure of each component $\C \setminus
	\overline{R_f(\theta) \cup R_f(\theta')}$ contains exactly one
	of $f^i(H')$ and $f^j(H')$.
 \end{itemize}
 Then there exists a center $c_0 \in \cM^*$ such that $f \in
 \cC(\lambda(f_{c_0}))$ and the (unique) critical $\lambda$-unlinked
 class $L$ satisfies $H' \subset K_{f_c}(L)$.
 
\end{thm}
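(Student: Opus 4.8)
The plan is to manufacture, out of the given separating ray pairs, a hyperbolic $(-2)$-invariant rational lamination $\lambda_0$ that is contained in $\lambda(f_c)$ and whose unique critical element is an infinite unlinked class $L$ of period $q$, and then to realize $\lambda_0$ as the rational lamination of a center $c_0$ by Lemma~\ref{lem:Kiwi-realization}. First I would collect the chords: for each pair $0\le i<j<q$ the given angles $\theta,\theta'$ satisfy $\theta\sim_{\lambda(f_c)}\theta'$, so the ray pairs $\{R_{f_c}(\theta),R_{f_c}(\theta')\}$ are chords belonging to $\lambda(f_c)$-classes and are therefore pairwise unlinked. These finitely many chords cut $\overline{\D}$ into regions; let $P_j$ be the one subtended by the angles separating $f^j(H')$ from the other iterates, so that $0\in P_0$ and $f^j(H')\subset P_j$ for $0\le j<q$.

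Next I would establish the dynamical structure on the pieces. Because $f_c(R_{f_c}(\theta))=R_{f_c}(-2\theta)$ and the property of two rays landing at a common point is preserved by $f_c$, the map $m_{-2}$ carries each boundary chord of a piece to a chord landing at a common point; I would then use the hypothesis that $f^1(H'),\dots,f^q(H')$ are mutually disjoint, together with the pairwise separation, to force the induced correspondence on pieces to be the single $q$-cycle $P_0\to P_1\to\cdots\to P_{q-1}\to P_0$, with $f^q(H')$ landing back in $P_0$. Since $0\in P_0$, the map $m_{-2}$ from the angles subtending $P_0$ to those subtending $P_1$ is two-to-one while every other piece maps one-to-one; thus $P_0$ is the unique critical piece.

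I would then define $\lambda_0$ to be the smallest closed equivalence relation on $\Q/\Z$ with finite, pairwise-unlinked classes that contains all the separating chords and is invariant under $m_{-2}$ (forward saturation together with pulling classes back). The verification that $\lambda_0$ satisfies the five axioms of a $(-2)$-invariant rational lamination — closedness, finiteness of classes, the consecutive-preserving property, and unlinkedness — is routine once the cyclic structure of the pieces is in hand. The essential points are that $\lambda_0$ is \emph{hyperbolic} and that its unique critical element is the infinite unlinked class $L$ consisting of the angles entering $P_0$: all critical behaviour sits at $P_0$, which carries the critical point, so there is no critical $\lambda_0$-class, $m_{-2}\colon L\to m_{-2}(L)$ has degree $2$, and $m_{-2}^{\,q}(L)=L$. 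Applying Lemma~\ref{lem:Kiwi-realization} with $d=-2$ then yields a unique hyperbolic post-critically finite anti-holomorphic quadratic polynomial $f_{c_0}$ with $\lambda(f_{c_0})=\lambda_0$; since $0$ is periodic of period $q$ for $f_{c_0}$, the parameter $c_0$ is a center of period $q$.

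Finally I would read off the two assertions. Every generating chord lies in $\lambda(f_c)$ and, landing-at-a-common-point being $(-2)$-invariant, so does every chord obtained by saturation and closure; hence $\lambda_0\subset\lambda(f_c)$, i.e.\ $f_c\in\cC(\lambda(f_{c_0}))$. For $H'\subset K_{f_c}(L)$, note that $H'\subset P_0$ and that every $\lambda_0$-class chord either bounds $P_0$ or is separated from it, so in both cases $P_0$, and therefore $H'$, lies in the closed sector $\overline{\sector(\theta,\theta';L)}$ on the $L$-side of that chord; intersecting all these sectors with $K(f_c)$ gives exactly $K_{f_c}(L)$. The main obstacle is the third step: checking that the saturated relation $\lambda_0$ is genuinely hyperbolic with a single critical unlinked class of degree exactly $2$ and period exactly $q$ — equivalently, that the combinatorial first-return map on $P_0$ is quadratic-like with a periodic critical piece. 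This is where the disjointness of the iterates and their pairwise separation by ray pairs must be used in full, and it is precisely where one works purely at the level of rational laminations, since the realized map $f_{c_0}$ is superattracting whereas $f_c$ itself is only parabolic.
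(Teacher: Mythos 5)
Your overall skeleton---build a hyperbolic $(-2)$-invariant rational lamination contained in $\lambda(f_c)$, realize it as $\lambda(f_{c_0})$ via Lemma~\ref{lem:Kiwi-realization}, and read off the fiber containment---is the same as the paper's, but your construction of the lamination runs in the opposite direction, and that is where a genuine gap sits. You build $\lambda_0$ bottom-up, as the saturation (forward images, pullbacks, closure) of the finitely many given chords, and then assert that ``every $\lambda_0$-class chord either bounds $P_0$ or is separated from it.'' Nothing in the hypotheses supports this. Each hypothesis chord is controlled only relative to the two pieces $f^i(H')$, $f^j(H')$ it separates: it may perfectly well cut a third piece $f^k(H')$, so your regions $P_j$ need not even be well defined; and, worse, the new chords produced by saturation are ones about which the hypotheses say nothing---a forward image can land at a point of $H'$ with branches of $H'$ on both sides of it. Since the fiber $K_{f_c}(L)$ is cut out by \emph{all} classes of $\lambda_0$, a single such chord destroys the inclusion $H'\subset K_{f_c}(L)$, which is the entire content of the theorem (and what Theorem~\ref{thm:tree-renorm} needs). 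The same defect undermines your second step: because the given chord system is not forward invariant, there is no induced map on your pieces, so the cyclic structure $P_0\to\cdots\to P_{q-1}\to P_0$, the degree count, and the periodicity of $L$---everything you defer as ``routine once the cyclic structure of the pieces is in hand''---is precisely what is not in hand. A smaller flaw of the same kind: ``landing at a common point is $(-2)$-invariant'' justifies only forward saturation; the preimages of two co-landing rays are four rays landing at two points, and only the grouping dictated by the dynamics of $f_c$ yields chords of $\lambda(f_c)$, so an abstract combinatorial pullback need not stay inside $\lambda(f_c)$.

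The paper avoids all of this by working top-down, by restriction rather than generation: it defines $\lambda$ to consist of those pairs $(\theta,\theta')\in\lambda(f_c)$ such that for \emph{every} $n\ge 0$ the chord $\overline{R_{f_c}(d^n\theta)\cup R_{f_c}(d^n\theta')}$ does not cut any of the trees $f^j(H')$. Forward invariance and compatibility with the trees then hold by definition, and the remaining axioms (closedness, finiteness and unlinkedness of classes, hyperbolicity, and the fact that classes map onto classes) are inherited from $\lambda(f_c)$, using that $f_c$ is principal parabolic, so $\lambda(f_c)$ is hyperbolic and $f_c$ is locally injective at every landing point; the hypothesis chords enter only to guarantee that this restricted lamination still separates the pieces. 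If you try to repair the bottom-up argument, you are forced to generate only with chords whose whole forward orbit is non-cutting and to pull back along $f_c$ (non-cutting is preserved under such pullbacks, by local injectivity at landing points)---at which point you have reconstructed the paper's definition.
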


\begin{proof}
 Define an equivalence relation $\lambda \subset \lambda(f)$ as follows:
 Two angles $\theta, \theta' \in \Q/\Z$ are equivalent if 
 they are $\lambda(f)$-equivalent and for any $n \ge 0$,
 $\gamma_n = \overline{R_f(d^n\theta) \cup R_f(d^n\theta')}$ does not
 ``cut'' any $H_v$;
 more precisely, each $H_v$ is contained in the closure of some 
 component of $\C \setminus \gamma_n$.
 
 Since $f$ is parabolic, $\lambda(f)$ is hyperbolic, so there is no
 critical $\lambda(f)$-class. 
 In particular, for every $\lambda(f)$-class $A$, $m_d:A \to m_d(A)$ is 
 a cyclic order preserving bijection and at the landing point of the
 rays of angles in $A$, $f$ is locally a homeomorphism.
 Thus it is straightforward to show that $\lambda$ is a hyperbolic
 $d$-invariant rational lamination.
 Moreover, for each $v \in |T|$,
 there exists a $\lambda$-unlinked class $L_v$ such that the
 corresponding fiber $K_f(L_v)$ contains $H_v$.
\end{proof}

\begin{thm}
 \label{thm:tree-renorm}
 Let $c \in \cM^*$ be principal parabolic of odd period $p>1$.
 Assume there exist $q \ge 1$ and a subtree $H'$ of a parabolic tree $H$
 such that 
 \begin{itemize}
  \item $f_c^q(H')=H'$.
  \item $f_c^i(H') \cap f_c^j(H') = \emptyset$ for $0\le i,j < q$, $i\ne
	j$.
  \item $0 \in H'$ and $f_c(0) \subset f(H')$ contains the
	characteristic parabolic point.
 \end{itemize}
 Then there exists an anti-holomorphic renormalization $f_c^q:U' \to U$
 of period $q$ such that $H' \subset K(f_c^q:U' \to U)$ and $H'$ contains
 a parabolic tree of the renormalization. 

 In particular, the filled Julia set $K(f_c^q:U' \to U)$ is equal to the
 closure of the Fatou component containing $0$ if and only if $p=q$.
 Also, we have $H'=H$ if and only if $q=1$.
\end{thm}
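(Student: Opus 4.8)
The plan is to build the renormalization in two stages: first combinatorially, by feeding the data $(q,H')$ into Theorem~\ref{thm:tree-comb_renorm}, and then analytically, by invoking the compactness of the renormalization locus to turn the combinatorial renormalization into an honest quadratic-like restriction.

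First I would verify the hypotheses of Theorem~\ref{thm:tree-comb_renorm}. We are handed the disjointness $f_c^i(H')\cap f_c^j(H')=\emptyset$ for $0\le i,j<q$, but that theorem also demands that each pair $f_c^i(H')$, $f_c^j(H')$ be separated by two rays landing at a common point. Since $c$ is parabolic the Julia set $J_c$ is locally connected, so $K_c$ is locally connected and arcwise connected and every cut point of $K_c$ lying in $J_c$ is the landing point of at least two rays; moreover the separating cut points between the (pre)periodic subtrees $f_c^i(H')$ are themselves (pre)periodic, so the landing rays have rational angle. Supplying this, Theorem~\ref{thm:tree-comb_renorm} produces a center $c_0\in\cM^*$ with $f_c\in\cC(c_0)$ and a critical unlinked class $L$ whose fiber satisfies $H'\subset K_{f_c}(L)$; the critical fiber is periodic of period $q$, so the renormalization period attached to $c_0$ is $q$.

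Next I would note that $q\mid p$, hence $q$ is odd. The disjoint pieces $H',f_c(H'),\dots,f_c^{q-1}(H')$ are cyclically permuted by $f_c$ with period $q$, and their union is forward invariant, so it contains the whole parabolic orbit of the characteristic parabolic point $\hat x\in f_c(H')$. Reading off the index of the piece containing each point of that orbit turns the period-$p$ relation $f_c^p(\hat x)=\hat x$ into $p\equiv 0 \pmod q$, i.e.\ $q\mid p$; as $p$ is odd so is $q$, and $f_c^q$ is genuinely anti-holomorphic. By Lemma~\ref{lem:odd-primitive} the center $c_0$ is then primitive, so Theorem~\ref{thm:IH-cpt} gives $\cC(c_0)=\cR(c_0)$ and therefore $f_c\in\cR(c_0)$. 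The $c_0$-renormalization is the desired anti-holomorphic quadratic-like map $g=f_c^q:U'\to U$ with $K(g)=K_{f_c}(L)\supset H'$. That $H'$ contains a parabolic tree of $g$ follows because $g$ inherits a parabolic cycle from $f_c$, its characteristic parabolic point $f_c^{q-1}(\hat x)$ already lies in $H'$, and $H'$ is a $g$-invariant tree joining the critical orbit of $g$ to its parabolic orbit; a minimal such subtree is a parabolic tree of $g$.

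It remains to read off the two equivalences. The critical Fatou component $U_0\ni 0$ has exact $f_c$-period $p$, so $g(U_0)=f_c^q(U_0)=U_0$ precisely when $p\mid q$, which together with $q\mid p$ means $q=p$; when $q<p$ the distinct component $g(U_0)$ also lies in the connected set $K(g)$, forcing $K(g)\supsetneq\overline{U_0}$, whereas when $q=p$ the whole $g$-orbit of $0$ stays in $U_0$ and $K(g)=\overline{U_0}$. Likewise $q=1$ makes $H'$ itself $f_c$-invariant and containing the entire critical orbit, hence equal to the full parabolic tree $H$; conversely $H'=H$ gives $f_c(H')=f_c(H)=H=H'$, which contradicts the disjointness of $H'$ and $f_c(H')$ unless $q=1$. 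The hard part will be the first stage: extracting the ray-separation hypothesis of Theorem~\ref{thm:tree-comb_renorm} from the bare disjointness assumption, that is, confirming that disjoint forward images of $H'$ are truly separated by rational ray pairs in the parabolic Julia set; once that bridge is in place, everything after it is a packaging of the Inou--Kiwi compactness theorem together with elementary counting of periods of Fatou components.
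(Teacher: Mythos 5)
Your overall architecture is the same as the paper's: feed $(q,H')$ into Theorem~\ref{thm:tree-comb_renorm}, deduce $q\mid p$ (hence $q$ odd), use Lemma~\ref{lem:odd-primitive} to get primitivity of $c_0$ and Theorem~\ref{thm:IH-cpt} to upgrade combinatorial renormalizability to an actual anti-holomorphic quadratic-like restriction, then extract the parabolic tree and the two ``in particular'' equivalences. Your treatment of $q\mid p$, of primitivity/compactness, and of the equivalences $K(g)=\overline{U_0}\Leftrightarrow p=q$ and $H'=H\Leftrightarrow q=1$ is sound and in places more explicit than the paper, which leaves these to the reader.

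However, there is a genuine gap at exactly the step you yourself flag as ``the hard part,'' and your sketch of it would not survive. You argue: local connectivity of $J_c$ gives separating cut points, cut points are landing points of at least two rays, and ``the separating cut points between the (pre)periodic subtrees are themselves (pre)periodic, so the landing rays have rational angle.'' Two things fail here. First, disjoint connected subsets of $K_c$ need \emph{not} be separated by any cut point (nor by any single ray pair): this fails precisely when $f_c^i(H')$ and $f_c^j(H')$ both meet the closure of a common Fatou component $U$, since then $\overline{U}\cup f_c^i(H')\cup f_c^j(H')$ is connected and $U$, being open and connected, lies in a single sector of any ray pair, forcing both pieces into the closure of that one sector. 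This Fatou-overlap configuration is the only real obstruction, and ruling it out is the actual content of the paper's first paragraph: if two pieces met a common Fatou component then, because each piece is $f_c^q$-invariant with $q\mid p$ and the parabolic tree's intersection with a Fatou component accumulates under iteration at the parabolic (pre)periodic point on its boundary, \emph{both} pieces would contain that boundary parabolic point, contradicting disjointness. Your proposal never addresses this case. Second, your rationality claim is unjustified and, as a universal statement, false: between two disjoint subtrees there are in general uncountably many separating cut points, and there is no reason for a given one to be preperiodic; what is needed is that \emph{some} separating ray pair is rational, which comes from the fiber structure of the (hyperbolic) rational lamination of the parabolic map once the Fatou-overlap case is excluded, not from preperiodicity of cut points. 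So the bridge you postponed is precisely where the paper does its real work, and it does so by a different argument (invariance plus contradiction) rather than the one you outline.
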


\begin{proof}
 First observe that the assumption of the
 Theorem~\ref{thm:tree-comb_renorm} holds for
 $(f_c^j(H')$; $j=0,\dots,q-1)$;
 otherwise,
 $f_c^i(H')$ and $f_c^j(H')$ intersects the same Fatou component.
 Then both $f_c^i(H')$ and $f_c^j(H')$ contains the parabolic periodic point
 on the boundary by invariance, hence they intersect.
 
 By Theorem~\ref{thm:tree-comb_renorm}, 
 there exists a rational lamination $\lambda \subset \lambda(f_c)$ 
 such that the critical $\lambda$-unlinked class $L$ satisfies $H'
 \subset K_{f_c}(L)$ and $L, m_{-2}(L),\dots,m_{-2}^{q-1}(L)$ are
 mutually disjoint.
 
 Since $q|p$, $q$ is odd. Hence by Kiwi's lemma
 (Lemma~\ref{lem:Kiwi-realization}) and Lemma~\ref{lem:odd-primitive}, 
 $\lambda=\lambda(f_{c_0})$ for some $c_0$ and it is primitive. 
 By Theorem~\ref{thm:IH-cpt}, 
 we have $\cC(c_0)=\cR(c_0)$. 
 Thus $f_c$ is renormalizable of period $q$.
 Since $H',\dots,f_c^{q-1}(H')$ intersect all the periodic bounded Fatou
 components, parabolic cycle and the critical orbit, $H'$ contains a
 parabolic Hubbard tree.
\end{proof}

\bibliography{tricorn}

\end{document}